\theoremstyle{plain}
\newtheorem{lemma}{Lemma}[section] 
\theoremstyle{definition}
\newtheorem{definition}[lemma]{Definition}
\theoremstyle{remark}
\newtheorem{remark}{Remark}[section] 
\theoremstyle{plain}
\newtheorem{theorem}[lemma]{Theorem}
\theoremstyle{plain}
\newtheorem{corollary}[lemma]{Corollary}
\theoremstyle{plain}
\newtheorem{assumption}[lemma]{Assumption}
\theoremstyle{plain}
\newtheorem{proposition}[lemma]{Proposition}
\theoremstyle{remark}
\numberwithin{equation}{section}
\DeclareMathOperator*{\argmin}{arg\,min}
\DeclareMathOperator*{\esssup}{ess\,sup}
\title{Policy Optimization for Continuous-time Linear-Quadratic Graphon Mean Field Games}
\author{Philipp Plank\thanks{Department of Mathematics, Imperial College London,  London,  UK  ({\tt p.plank24@imperial.ac.uk, yufei.zhang@imperial.ac.uk})} \and Yufei Zhang\footnotemark[1]}
\date{ }
\begin{document}

\maketitle

\begin{abstract}
     Multi-agent reinforcement learning, despite its popularity and empirical success, faces significant scalability challenges in large-population dynamic games. Graphon mean field games (GMFGs) offer a principled framework for  approximating such games while capturing heterogeneity among players. In this paper, we propose and analyze a policy optimization framework for continuous-time, finite-horizon linear-quadratic GMFGs. Exploiting the structural properties of GMFGs, we design an efficient policy parameterization in which each player's policy is represented as an affine function of their private state, with a shared slope function   and  player-specific intercepts. We develop a bilevel optimization algorithm that alternates between policy gradient updates for best-response computation under a fixed population distribution, and distribution updates using the resulting policies. We prove linear convergence of the policy gradient steps to best-response policies and establish global convergence of the overall algorithm to the Nash equilibrium. The analysis relies on novel landscape characterizations over   infinite-dimensional policy spaces. Numerical experiments demonstrate the convergence and robustness of the proposed algorithm under varying graphon structures, noise levels, and action frequencies.

\end{abstract}

\medskip
 
\noindent
\textbf{Keywords.} Continuous-time graphon mean field game, global convergence, policy gradient, Nash equilibrium, linear-quadratic, heterogeneous players, infinite-dimensional policy space

\medskip

\noindent
\textbf{AMS subject classifications.} 68Q25, 91A15, 49N80, 91A07, 91A43, 49N10 


\section{Introduction}

Multi-agent reinforcement learning (MARL) has achieved substantial success in finding Nash equilibria in dynamic 
non-cooperative games, with applications in domains such as computer and board games \cite{openai_dota_2019, silver_go}, self-driving vehicle navigation \cite{shalev-shwartz_safe_2016}, and real-time bidding  \cite{jin2018real}. Despite these empirical achievements, 
  MARL faces significant scalability issues as the number of players increases,  owing to the exponential growth in the complexity of the joint strategy space. 
  As a result, developing efficient and theoretically grounded MARL algorithms for computing equilibrium strategies in large-population dynamic games remains a formidable challenge.

A principled approach to addressing these scalability challenges is through the mean field game (MFG) approximation.
Classical  MFG  theory considers settings   with a large number of \emph{homogeneous} agents who interact weakly and \emph{symmetrically} \cite{huangMFG, lasry_mean_2007}. In this regime, by a law of large numbers, the equilibria of finite-player games can be approximated by those of a limiting MFG, where a single representative agent interacts with the population distribution. This approximation greatly simplifies the analysis and computation, leading to provably convergent MARL algorithms for large-population games
 (see, e.g., \cite{angiuli_unified_2022,  pmlr-v130-cui21a, elie_convergence_2020, guo_learning_2019,
 hu2024mf,  mfg_rl_statisticharder,
 huang_stat_effic_mfrl, laurière2024learningmeanfieldgames, wang2021global, pmlr-v139-xie21g, yardim_mfrl_tractable}). 
However, the strong assumptions of homogeneity and symmetry inherent in classical MFG theory limit its applicability to more realistic scenarios involving heterogeneous agents.

The graphon mean field game (GMFG) framework has recently emerged as a powerful tool for analyzing large-population games involving heterogeneous agents. In contrast to classical MFGs, which rely on strong symmetry assumptions among agents, GMFGs capture heterogeneity by modeling agent interactions through large networks and studying their asymptotic behavior via the graphon limit, i.e., a measurable function that describes the limiting structure of interaction graphs (see \cite{lovász2012large} for an overview of graphon theory). This framework allows for the analysis of games where each agent   interacts differently depending on their position within the network.

Most existing work on GMFGs focuses on analytical aspects, assuming known continuous-time system dynamics and cost functions (see e.g., 
\cite{aurell2021stochasticgraphongamesii, Caines_2021, carmona2019stochasticgraphongamesi, gao2021lqggraphonmeanfield, lacker_label-state_2022, 
2023parisegraphongames, neuman_stochastic_2024, foguen-tchuendom_infinite_2024}).
These studies have established  the existence of Nash equilibria of GMFGs and   demonstrated that the equilibria in GMFGs   provide accurate approximations to equilibria in finite-player games when the number of agents is large.   

In contrast, the development of provably efficient learning algorithms for GMFGs remains relatively underexplored. In particular, existing algorithms are largely confined to discrete-time models with finite state and/or action spaces, primarily due to their mathematical tractability (see, e.g., \cite{
cui2022learninggraphonmeanfield, 
fabian2023learningsparsegraphonmean, zhang2023learningregularizedmonotonegraphon, zhang2023learningregularizedgraphonmeanfield}).

\paragraph{Our work.}
 In this work, we propose and analyze the first learning algorithm for continuous-time  GMFGs. 
 We focus on the finite-horizon linear-quadratic  GMFG (LQ-GMFG), a fundamental model in which each agent controls a linear stochastic differential equation to minimize a quadratic cost over a fixed time horizon. Both the state dynamics and cost functions involve heterogeneous interactions with the graphon aggregate of the population.
 Specifically, 

\begin{itemize}
\item 
We  consider general square-integrable 
    graphons, which are   crucial for capturing the limiting behavior of finite-player games with sparse interactions \cite{borgs_lp_2019, borgs_lp_2018, fabian2023learningsparsegraphonmean}. 
    \item We parameterize each player’s policy as an affine function of their private state, with time-dependent slope and intercept terms. 
     Importantly, the same slope function is shared across all players, while the intercept is player-specific and captures each agent’s dependence on the population. 
     This parameterization exploits the structural properties of GMFGs and  significantly reduces the policy complexity compared to conventional approaches, particularly for   large-scale games (see Remark \ref{rmk:complexity_policy}).

\item We propose a bilevel policy optimization algorithm for computing the NE policy (Algorithm \ref{algo}). The outer loop employs a mean field oracle to compute the graphon aggregate induced by the current policy. In the inner loop, each player applies a policy gradient (PG)  method to compute their best response given the fixed graphon aggregate. These PG updates are derived from a novel decomposition of each player's cost function (Theorem \ref{theorem:cost_decomposition}), which enables a \emph{player-independent} update  of the slope parameter and substantially reduces computational complexity in large-population settings (see Remark \ref{rmk:gradient_normalization}).

\item 
We establish new cost landscape properties over the infinite-dimensional policy space using advanced   functional analysis tools. Specifically, we show that along the gradient iterates, the cost function satisfies the Polyak-{\L}ojasiewicz condition and is almost Lipschitz smooth with respect to the slope parameter, as well as  being strongly convex and Lipschitz smooth with respect to the intercept term. Leveraging these regularity properties, we prove that the inner PG method converges globally to the best response policy at a linear rate (Theorems \ref{theorem:K_CONVERGENCE} and \ref{theorem:G_CONVERGENCE}). Furthermore, under a suitable contractivity condition, we prove that the bilevel policy optimization algorithm converges to the NE policy (Theorem \ref{theorem:CONVERGENCE_FINAL}).

To the best of our knowledge, this is the first linear convergence
result for PG methods with  \emph{affine policies} for finite-horizon  continuous-time LQ control 
problems, and the first provably convergent policy optimization algorithm for continuous-time GMFGs. 

\item 
Building on our theoretical analysis, we develop practically implementable policy optimization methods that operate on discrete time grids. A key feature of our approach is the proper scaling of discrete-time gradients with respect to the action frequency, ensuring robust   performance across varying timescales (see Remark \ref{rmk:scaling}). 

\item We conduct extensive numerical experiments to demonstrate the effectiveness of the proposed methods, where policy gradients are estimated from observations using either pathwise differentiation or zeroth-order optimization techniques. The results validate our theoretical findings and show that the algorithm achieves robust linear convergence across a broad spectrum of graphon interaction structures, noise levels, and action frequencies.

\end{itemize}
 
\paragraph{Our approaches.}

A crucial technical step in our analysis  involves establishing  a novel decomposition of each player’s cost function in  Theorem \ref{theorem:cost_decomposition}. This decomposition  is essential to the design of   a convergent PG algorithm for computing each player's best response under a fixed graphon aggregate. Constructing such PG methods requires analyzing LQ control problems with \emph{affine policies}.

While there is a substantial body of work on policy optimization for LQ control with linear policies, the case of affine policies has received far less attention. To the best of our knowledge, affine policy optimization has only been studied in \cite{carmona_linear-quadratic_2019, frikha_full_2024, wang2021global} for infinite-horizon LQ problems with stationary policies under specific structural conditions.

The main difficulty lies in the fact that the cost function depends jointly on the slope and intercept parameters of the affine policy. This coupling significantly complicates the loss landscape, making it difficult to ensure uniform landscape properties along the PG iterates. To address this, prior works impose structural assumptions that either fully decouple the contributions of the slope and intercept terms, allowing them to be optimized separately \cite{carmona_linear-quadratic_2019, frikha_full_2024}, 
or make the intercept term analytically solvable \cite{wang2021global}. 

Unfortunately, the structural assumptions imposed in \cite{carmona_linear-quadratic_2019, frikha_full_2024, wang2021global} do not extend to general LQ GMFGs considered in this paper. Moreover, the finite-horizon cost criterion introduces additional challenges, as it requires optimization over infinite-dimensional spaces of time-dependent policies, in contrast to the finite-dimensional spaces of stationary policies typically considered in infinite-horizon settings.

To   overcome the above challenge,
we build on the   cost decomposition   in Theorem \ref{theorem:cost_decomposition} and  introduce a sequential optimization approach, wherein we first optimize the slope parameter, followed by the intercept parameter conditioned on the optimized slope.
To address the lack of coercivity in the cost function for finite-horizon problems (see \cite[Proposition 2.4]{giegrich_convergence_2022}), we normalize the gradient of the slope parameter using the state covariance matrix, rather than the state second moment matrices used in prior work \cite{fazel_global_2018, giegrich_convergence_2022, hambly_policy_2021}. This yields a player-independent gradient update for the slope parameter that is uniformly bounded.

Moreover, we apply advanced   tools from functional analysis to characterize  the  regularity   of the decomposed cost   with respect to the intercept parameter (Propositions \ref{proposition:J2_strongconvexity} and \ref{proposition:LSmooth}). These  properties  hold uniformly with respect to the slope parameter, population distribution, and  player identity.
The full   convergence analysis of Algorithm \ref{algo} to the NE policy requires careful control of error propagation in both the mean field oracle and the policy parameters throughout their sequential updates.

\paragraph{Most related works.}

There is an extensive body of research on policy optimization algorithms in RL, and it is beyond the scope of this work to cover it comprehensively. Therefore, we focus on the subset of RL literature that is most relevant to our study.

\vspace{2mm}
\noindent \textit{Policy gradient methods.} 
Policy gradient method and its variants  \cite{naturalpolgrad, konda1999actor, silver_dpg, sutton_pg_rl_fa} are 
 the cornerstone of recent successes in RL \cite{sutton2018reinforcement}.  Convergence analyses of policy optimization methods have primarily focused on discrete-time Markov decision processes   (see, e.g., \cite{fazel_global_2018, hambly_policy_2021, kerimkulov2024fisherraogradientflowentropyregularised, pmlr-v119-mei20b} and references therein).

Analysis of policy gradient methods for continuous-time problems are fairly limited. For single-agent control problems, policy gradient methods have been analyzed in the infinite-horizon setting with stationary policies, both for LQ  problems \cite{bu_policy_2020, zhang_convergence_2024} and for more general drift-controlled systems \cite{sethi2024entropy}. In the finite-horizon setting, convergence results have been established for LQ problems with time-dependent policies \cite{giegrich_convergence_2022}. Extensions of policy gradient methods to the  settings involving large populations of homogeneous agents have also been studied both in cooperative control problems \cite{carmona_linear-quadratic_2019, wang2021global} and in LQ MFGs \cite{wang2021global}. Notably, these works are restricted to infinite-horizon formulations and finite-dimensional, stationary policies.
 
In contrast to the above literature, our work  considers 
  finite-horizon GMFGs    with heterogeneous player interactions, and   infinite-dimensional, time-dependent policies.

\vspace{2mm}
\noindent \textit{Learning in MFGs and GMFGs.} 
Numerous learning algorithms have been developed for MFGs and the corresponding finite-player games with homogeneous players (see, e.g., \cite{angiuli_unified_2022, carmona_deep_2023, pmlr-v130-cui21a, elie_convergence_2020, guo_learning_2019, pmlr-v139-xie21g, mirrorasc_mfg}). Convergence is typically guaranteed under suitable contraction conditions \cite{ANAHTARCI2020104744, guo_learning_2019, mirrorasc_mfg} or monotonicity assumptions \cite{hu2024mf}. Sample efficiency   of RL for MFGs is studied in \cite{hu2024mf, mfg_rl_statisticharder, huang_stat_effic_mfrl, yardim_mfrl_tractable}. We refer the reader to the survey article \cite{laurière2024learningmeanfieldgames} for a comprehensive review and additional references.

In contrast, learning algorithms for GMFGs with heterogeneous player interactions remain relatively underexplored. To the best of our knowledge, all existing works are limited to discrete-time settings with finite state and/or action spaces \cite{cui2022learninggraphonmeanfield, fabian2023learningsparsegraphonmean, zhang2023learningregularizedmonotonegraphon, zhang2023learningregularizedgraphonmeanfield}, and they establish convergence results under contractivity conditions \cite{cui2022learninggraphonmeanfield, zhang2023learningregularizedgraphonmeanfield} or monotonicity assumptions \cite{fabian2023learningsparsegraphonmean, zhang2023learningregularizedmonotonegraphon}. Most algorithmic studies on GMFGs focus on bounded graphons, which correspond to limits of finite-player games with dense interaction structures. An exception is \cite{fabian2023learningsparsegraphonmean}, which proposes an online mirror descent algorithm for unbounded graphons, enabling the modeling of sparse interactions. Furthermore, most of the existing literature assumes full knowledge of the graphon and access to exact gradients. Among them, \cite{zhang2023learningregularizedgraphonmeanfield} is the only work that addresses model estimation under an unknown player interaction structure.

\paragraph{Notation.}
For a Euclidean space $E$, we denote by  $\langle \cdot, \cdot \rangle$ the standard scalar product, and 
by $\lvert \cdot \vert$  the induced norm. 
Let $I_d$ be the $d\times d $ identity matrix.
For a matrix $A \in \mathbb{R}^{d \times k}$, we denote by $A^\top$   the transpose of $A$,
by $\mathrm{tr}(A)$ the trace of $A$,
and 
by 
$\lambda_{\mathrm{max}}(A)$ and $\lambda_{\mathrm{min}}(A)$   the largest and smallest eigenvalue of $A$, respectively. 
We denote  by  $\langle \cdot, \cdot \rangle_F$
the Frobenius scalar product of a matrix,
and 
by $\lVert \cdot \rVert_2$ the spectral norm.
 $\mathbb{S}^d, \Bar{\mathbb{S}}_{+}^d$ and $\mathbb{S}_{+}^d$ denote  the space of symmetric, positive semidefinite,  and positive definite matrices, respectively. $\mathbb{S}^d$ is endowed with the Loewner partial order such that for $A, B \in \mathbb{S}^d$,  $A \succcurlyeq B$ if $A - B \in \Bar{\mathbb{S}}_{+}^d$. In particular, $A \succcurlyeq 0$ for $A \in \Bar{\mathbb{S}}_{+}^d$ and $A \succ 0$ for $A \in \mathbb{S}_{+}^d$. 

 For a given Euclidean space $E$, we introduce the following function spaces: \\
$L^2([0,T], E)$ is the space of measurable function $f: [0,T] \to E$ with finite $L^2$-norm induced by the inner product $\langle \cdot, \cdot \rangle_{L^2}$:
\begin{equation*}
    \langle f,g \rangle_{L^2} \coloneqq \int_0^T (f(t))^\top g(t) \, dt, \quad  \forall f,g \in L^2([0,T], E). 
\end{equation*}
 $L^\infty([0,T], E)$ is  the space of measurable functions $f: [0,T] \to E$ which are bounded almost everywhere, i.e., $\lVert f \rVert_{L^\infty} \coloneqq \esssup_{t \in [0,T]}{\lvert f(t) \rvert} < \infty$.
$\mathcal{C}([0,T], E)$   is  the space  of continuous   functions $f:[0,T]\to E$.  $L^2([0,T], L^2([0,1], E))$ is the Bochner-Lebesgue space 
of 
Bochner measurable functions  $f: [0,T]\to  L^2([0,1], E)$
satisfying 
\begin{equation*}
    \lVert f \rVert_{L_B^2} \coloneqq \left( \int_0^T \lVert f(t) \rVert_{L^2([0,1])}^2 \, dt \right)^{1/2}<\infty.
\end{equation*}
 $\mathcal{C}([0,T], L^2([0,1], E))$ 
 is the space of continuous functions 
 $f: [0,T]\to L^2([0,1], E)$ satisfying 
   $\lVert f \rVert_{\mathcal{C}} \coloneqq \sup_{t \in [0,T]}{\lVert f(t) \rVert_{L^2([0,1])}}<\infty$. Note that  the spaces $(L^2([0,T], L^2([0,1], E)), \lVert \cdot \rVert_{L_B^2})$ and $(\mathcal{C}([0,T], L^2([0,1], E)), \lVert \cdot \rVert_{\mathcal{C}})$ are Banach spaces. 
   We denote by $\lVert \cdot \rVert_{\mathrm{op}}$   the   operator norm
   for an operator 
   between Banach spaces, and by  $\mathrm{id}$
   the identity operator on $L^2([0,T], E)$.

\section{Problem formulation and main results}
\label{section:problem_formulation}

This section 
formulates the linear-quadratic graphon mean field game (LQ-GMFG),
proposes a policy optimization algorithm to compute its NE,
and presents the main convergence results.

\subsection{Mathematical setup of LQ-GMFGs}

 This section presents the mathematical formulation of a continuous-time LQ-GMFG. The game consists of a continuum of heterogeneous, competitive, and rational players, each represented by a node in an infinite network. Each player's state evolves according to a linear stochastic differential equation, where the drift coefficient is   controlled and influenced heterogeneously by the overall population state distribution. Each player optimizes a quadratic objective over a finite time horizon. An NE in this game is attained when the distribution of players' optimal state dynamics matches the given population state distribution.

\paragraph{LQ-GMFG and its NE.}

Let $T>0$ be a given terminal time,
and     $I \coloneqq [0,1]$ be  the index set for   the continuum of players,
equipped with the Borel $\sigma$-algebra. 
 
 We first introduce the initial conditions and  noises  for   players' state processes.
Let   
$\mathcal P(\mathbb R^d)$
be the space of probability measures
on $\mathbb R^d$
equipped with the weak topology, 
and  $\nu: I\to \mathcal P(\mathbb R^d)$ be a   measurable function 
where 
$\nu(\alpha)$ has a second moment
for all $\alpha \in I$.
The probability measure $\nu(\alpha)$ models 
 the 
  distribution of player $\alpha$'s initial state. 
Let   $(\Omega, \mathcal F, \mathbb P)$ 
be a probability space, and  
$(\xi^\alpha, B^\alpha)_{\alpha\in I}$   
be 
  a collection of  $\mathbb R^d\times \mathcal{C}([0,T], \mathbb R^d)$-valued random variables 
  defined 
on   $I\times \Omega$,
such that  
$  (\xi^\alpha, B^\alpha )_{\alpha\in I}$
are
independent    random variables,
and 
  for all $\alpha \in I$,
$\xi^\alpha$ has the distribution $\nu(\alpha)$
and $B^\alpha$ is a Brownian motion independent of $\xi^\alpha$ 
 under the measure $\mathbb P$.
 The random variables 
 $\xi^\alpha$ 
  and $B^\alpha$  represent the initial condition and the idiosyncratic noise of the state process for player $\alpha$, respectively. 
To simplify the notation,
we   drop the subscript $\alpha$ when referring to a family of random variables or processes indexed by $\alpha\in  I $, e.g., $\xi= (\xi^\alpha)_{\alpha\in  I }$.

{  

Each player's state dynamics follow a linear SDE, where the evolution depends on their own control as well as   a graphon aggregate   of the population’s state means.
More precisely, 
let 
$\mu = (\mu^\alpha)_{\alpha \in I} \in \mathcal{C}([0,T], L^2(I, \mathbb{R}^d))$
be a given flow of the population’s state means, and define 
$Z=(Z^\alpha)_{\alpha \in I}\in \mathcal{C}([0,T], L^2(I, \mathbb{R}^d))$ by
\begin{equation}
\label{eq:graphon_aggregate}
    Z_t^\alpha \coloneqq 
    \int_I W(\alpha, \beta) \mu_t^\beta \, d\beta, \quad t\in [0,T],
\end{equation}
where $W: I\times I\to \mathbb R$ is a symmetric measurable    function
satisfying $\|W\|_{L^2(I^2)}<\infty$
, referred to as a \emph{graphon}.
For each $\alpha, \beta\in I$, 
$W(\alpha,\beta)$ represents the interdependence between players $\alpha$ and $\beta$,
and $Z^\alpha$
  reflects  how player $\alpha$'s  state dynamics depends on the population distribution.  
  }

\begin{remark}[\textbf{Unbounded graphons}]
    Note that, rather than restricting to bounded graphons $W: I \times I \to [0,1]$ as is common in most of the GMFG literature (see, e.g., \cite{aurell2021stochasticgraphongamesii, Caines_2021, gao2021lqggraphonmeanfield, Gao_2021}), we allow for general $L^2$
    graphons, which are   crucial for capturing the limiting behavior of $N$-player games with sparse interactions \cite{borgs_lp_2019, borgs_lp_2018, fabian2023learningsparsegraphonmean}.
\end{remark}

Given the graphon aggregate $Z$, 
 for each $\alpha \in I$,
 let 
 $\mathbb F^\alpha$ be the filtration generated by
 $(\xi^\alpha, B^\alpha)$, and 
 $\mathcal A^\alpha$ be the
set of  square integrable  $\mathbb F^\alpha$-progressively measurable processes
$U^\alpha: \Omega\times [0,T]\to \mathbb R^k$,
  representing the set of admissible
 controls of player $\alpha$. 
 For each 
 $U^\alpha\in \mathcal A^\alpha$,
 player $\alpha$'s state process is governed by the following dynamics
\begin{equation}
\label{eq:state_dynamics}
    dX_t^\alpha = \left(A(t) X_t^\alpha + B(t) U_t^\alpha + \Bar{A}(t) Z_t^\alpha\right) \, dt + D(t) \, dB_t^\alpha,
    \quad 
    t\in [0,T]; 
     \quad X_0^\alpha = \xi^\alpha, 
\end{equation}
where  $A, \Bar{A}, D \in \mathcal{C}([0,T], \mathbb{R}^{d \times d})$ and 
$B\in \mathcal{C}([0,T], \mathbb{R}^{d \times k})$ are given functions.
Player $\alpha$ considers minimizing the following quadratic cost functional 
\begin{equation}
    \label{eq:quadratic_cost_initial}
    \begin{split}
        J^\alpha(U^\alpha, Z^\alpha)&  \coloneqq \mathbb{E}\Bigg[ \int_0^T \left[(X_t^\alpha - H(t) Z_t^\alpha)^\top Q(t) (X_t^\alpha - H(t) Z_t^\alpha) + (U_t^\alpha)^\top R(t) U_t^\alpha \right] \, dt \\
        &\quad + (X_T^\alpha - \bar{H} Z_T^\alpha)^\top \bar{Q} (X_T^\alpha - \bar {H} Z_T^\alpha) \Bigg]
    \end{split}
\end{equation}
over all $U^\alpha \in \mathcal A^\alpha$, 
where 
$X^\alpha$ satisfies the dynamics
\eqref{eq:state_dynamics}
controlled by  $U^\alpha$ with a fixed graphon aggregate $Z^\alpha$,
and 
$Q \in \mathcal{C}([0,T], \overline{\mathbb{S}}_{+}^d)$, $R \in \mathcal{C}([0,T], \mathbb{S}_{+}^k)$, 
$H \in \mathcal{C}([0,T], \mathbb{R}^{d \times d})$,
$\bar{Q} \in \overline{\mathbb{S}}_{+}^d$ and $\bar{H} \in \mathbb{R}^{d \times d}$ are given coefficients. 
To simplify the notation, 
  the time variable of all model coefficients  will be  dropped when there is no risk of confusion.

Note that
for ease of exposition,
we assume 
  the coefficients in \eqref{eq:state_dynamics} and \eqref{eq:quadratic_cost_initial} are continuous. However,  similar analysis and results extend to measurable coefficients satisfying appropriate integrability conditions   as in \cite{giegrich_convergence_2022}. 
  Moreover,
  we assume that the mean field dependence in both the state dynamics and the cost functional is induced by the same graphon $W$, 
  but  the results naturally extend to settings where these dependencies are governed by different graphons.

In this LQ-GMFG,  player $\alpha$ optimizes \eqref{eq:quadratic_cost_initial} by considering the
graphon aggregate $Z^\alpha$ in  \eqref{eq:state_dynamics} and \eqref{eq:quadratic_cost_initial} 
(or equivalently the population means $(\mu^\alpha)_{\alpha \in I}$ in \eqref{eq:graphon_aggregate})
as given. 
An NE is achieved when the graphon aggregate generated by the players' optimal state processes matches the prescribed graphon aggregate  $Z^\alpha$. 
 
\begin{definition}

\label{def:NE}

We say $U^*=(U^{*,\alpha})_{\alpha\in  I}
\in (\mathcal A^\alpha)_{\alpha \in I} $ and $Z^*=(Z^{*,\alpha})_{\alpha\in  I}\in \mathcal{C}([0,T], L^2( I,\mathbb R^d))$ is an NE of the LQ-GMFG
if 
\begin{enumerate}[(1)]
\item
\label{item:optimality}
$U^*$ is optimal when $Z^*$  is the given graphon aggregate, i.e., 
for all $\alpha\in  I$,
$J^\alpha(U^{*,\alpha},Z^{*,\alpha})\le J^\alpha(U^\alpha,Z^{*,\alpha})$
for all $U^\alpha\in \mathcal A^{\alpha}$.

\item
\label{item:consistency}
$(U^*, Z^*)$ satisfies the consistency condition, i.e., 
    for all $t\in [0,T]$ and $\alpha\in  I$,  $\mu^* = (\mu^{*,\alpha})_{\alpha \in I} \in \mathcal{C}([0,T], L^2([0,1], \mathbb{R}^d))$ and $Z_t^{*,\alpha}= 
    \int_I W(\alpha, \beta) \mu_t^{*,\beta} \, d\beta$ with $\mu_t^{*,\alpha} = \mathbb{E}[X_t^{*,\alpha}]$, where $X^{*,\alpha}$ satisfies 
\eqref{eq:state_dynamics} with  
    $U^\alpha =U^{*,\alpha}$
    and $Z^\alpha= Z^{*,\alpha}$.
\end{enumerate}
\end{definition}

\paragraph{Characterization of NEs.}

The NEs of the LQ-GMFG can be characterized through a coupled forward-backward   system (see e.g., \cite{gao2021lqggraphonmeanfield, xu2024linearquadraticgraphonmeanfield}).
More precisely, 
define  the graphon operator  $\mathbb{W}: L^2(I, \mathbb{R}^d) \to L^2(I, \mathbb{R}^d)$    such that  
\begin{equation}
    \label{eq:graphon_operator}
    \mathbb{W}[f](\alpha) \coloneqq \int_I W(\alpha, \beta) f(\beta) \, d\beta, \quad \forall f \in L^2(I, \mathbb{R}^d),
\end{equation}
where $W$ is the graphon in \eqref{eq:graphon_aggregate}. Since $W\in L^2(I^2)$, the graphon operator $\mathbb{W}$ is a bounded linear operator.
Consider the following  infinite-dimensional   forward-backward  system:
for all $t\in [0,T]$ and $\alpha\in I$, 
\begin{subequations}
\label{eq:semiexplicit_ODEs}
\begin{align}[left = \empheqlbrace\,]
    \begin{split}
        &\frac{\partial  Z_t^{*,\alpha}  }{\partial t}  = \left(A  - B  R^{-1}  B^\top P_t^* \right) Z_t^{*,\alpha} + \Bar{A}\mathbb{W}[Z_t^*](\alpha) - B R^{-1}  B^\top \mathbb{W}[S_t^*](\alpha), 
       \end{split}  \\
      \begin{split}    \label{eq:PI_riccati}
      \frac{\partial P_t^*}{\partial t} + A^\top P_t^* + P_t^* A - P_t^* B   R^{-1} B^\top P_t^* + Q = 0, 
      \end{split}\\ 
         \begin{split}
            &\frac{\partial S_t^{*,\alpha}  }{\partial t}   = -\left(A  - B  R^{-1} B^\top P_t^* \right)^\top S_t^{*,\alpha} + (Q  H - P_t^* \Bar{A}) Z_t^{*,\alpha}, 
    \end{split} \\
      \begin{split}
            &Z_0^{*,\alpha} = \mathbb{W}[\mathbb{E}[\xi]](\alpha), \quad P_T^* = \bar{Q},
            \quad S_T^{*,\alpha} = \bar{Q} \bar{H} Z_T^{*,\alpha}, 
    \end{split} 
  \end{align}
\end{subequations}
where    the time variable of all coefficients is  
dropped.
Above and hereafter, the  ODE systems and their solutions will be understood in the mild sense; see \cite{gao2021lqggraphonmeanfield}.

The following proposition characterizes  NEs  of the LQ-GMFG through   solutions of \eqref{eq:semiexplicit_ODEs},
as established in 
\cite[Proposition 2]{gao2021lqggraphonmeanfield}.

\begin{proposition}
    \label{proposition:existenceANDuniqueness}
   Suppose   that the system
   \eqref{eq:semiexplicit_ODEs} has a unique solution   $P^* \in \mathcal{C}([0,T], \mathbb{S}^d)$,
$Z^*\in \mathcal{C}([0,T], L^2(I, \mathbb{R}^d))$ and $S^* \in  \mathcal{C}([0,T], L^2(I, \mathbb{R}^d))$.
Then the LQ-GMFG has  a unique NE  $(U^*,Z^*)$, 
with   $U^*=(U^{*,\alpha})_{\alpha \in I}$
given by 
$U_t^{*,\alpha} = K^*_t X_t^{*,\alpha} + G_t^{*,\alpha}$
for all $\alpha \in I$ and a.e.~$t\in [0,T]$, where
\begin{equation}
    \label{eq:systemcontrol}
    K^*_t =- R^{-1} B^\top P_t^*,
    \quad
    G^{*,\alpha}_t
    =- R^{-1} B^\top S_t^{*,\alpha},
\end{equation}
and 
$X^{*,\alpha}$ satisfies the dynamics 
\begin{equation}
\label{eq:equil_state}
    dX_t^{*,\alpha} = \left(A  X_t^{*,\alpha} + B  U_t^{*,\alpha}  + \Bar{A}  Z_t^{*,\alpha}\right) \, dt + D  \, dB_t^\alpha,
    \quad 
    t\in [0,T]; 
     \quad X_0^{*,\alpha} = \xi^\alpha. 
\end{equation}
 
\end{proposition}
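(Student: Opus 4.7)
The plan is to decouple the NE characterization into (i) a per-player stochastic LQ optimization given a fixed graphon aggregate, and (ii) a consistency condition that pins down the aggregate itself. For step (i), fix $\alpha \in I$ and treat $Z^{*,\alpha}\in \mathcal{C}([0,T], \mathbb{R}^d)$ as an exogenous inhomogeneous forcing in both the drift of \eqref{eq:state_dynamics} and the tracking term of \eqref{eq:quadratic_cost_initial}. This reduces player $\alpha$'s problem to a classical inhomogeneous stochastic LQ control problem. Using dynamic programming with the quadratic-affine ansatz $V^\alpha(t,x) = x^\top P_t x + 2 x^\top \eta^\alpha_t + c^\alpha_t$ (alternatively via completion of squares on the cost), the HJB equation decouples into a symmetric Riccati ODE for $P$ that is independent of $\alpha$ and $Z^{*,\alpha}$, together with a linear backward ODE for $\eta^\alpha$ driven by $Z^{*,\alpha}$; the terminal conditions are read off from the quadratic terminal cost. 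Matching these with \eqref{eq:semiexplicit_ODEs} identifies $P=P^*$ and $\eta^\alpha$ with $S^{*,\alpha}$ under the sign convention implicit in \eqref{eq:systemcontrol}. The pointwise Hamiltonian has the unique minimizer $U^{*,\alpha}_t = K^*_t X^{*,\alpha}_t + G^{*,\alpha}_t$ with $(K^*,G^{*,\alpha})$ given by \eqref{eq:systemcontrol}, and a standard completion-of-squares verification on the closed-loop cost shows this policy is indeed optimal in $\mathcal{A}^\alpha$.

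For step (ii), substitute the candidate feedback into the closed-loop SDE \eqref{eq:equil_state} and take expectations. Because the idiosyncratic Brownian increments have mean zero and because $A, B, \bar A, R^{-1}, P^*$ do not depend on $\alpha$, the means $\mu^{*,\alpha}_t \coloneqq \mathbb{E}[X^{*,\alpha}_t]$ satisfy the linear ODE
\begin{equation*}
    \tfrac{\partial}{\partial t}\mu^{*,\alpha}_t = \bigl(A - B R^{-1} B^\top P^*_t\bigr)\mu^{*,\alpha}_t + \bar A Z^{*,\alpha}_t - B R^{-1} B^\top S^{*,\alpha}_t, \quad \mu^{*,\alpha}_0 = \mathbb{E}[\xi^\alpha].
\end{equation*}
Applying the graphon operator $\mathbb{W}$ in $\alpha$, which is a bounded linear operator on $L^2(I,\mathbb{R}^d)$ since $W\in L^2(I^2)$, and exchanging $\mathbb{W}$ with time integration in the mild formulation, the defining relation $Z^{*,\alpha}_t = \mathbb{W}[\mu^*_t](\alpha)$ yields exactly the first ODE in \eqref{eq:semiexplicit_ODEs} together with the initial condition $Z^{*,\alpha}_0 = \mathbb{W}[\mathbb{E}[\xi]](\alpha)$. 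Hence any solution $(P^*, Z^*, S^*)$ of \eqref{eq:semiexplicit_ODEs} produces an admissible policy and a graphon aggregate satisfying both individual optimality and the consistency condition of Definition \ref{def:NE}, so $(U^*, Z^*)$ is an NE.

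Uniqueness of the NE follows by reversing the argument. If $(\tilde U, \tilde Z)$ is any NE, then by the per-player LQ verification theorem the optimal $\tilde U^\alpha$ must take the affine feedback form associated with some $(\tilde P, \tilde S^\alpha)$ solving the Riccati equation and the $\alpha$-parameterised backward ODE driven by $\tilde Z^\alpha$; the consistency condition then forces the forward ODE for $\tilde Z$. Thus $(\tilde P, \tilde Z, \tilde S)$ solves \eqref{eq:semiexplicit_ODEs}, and the assumed uniqueness of that system transfers directly to uniqueness of $(U^*, Z^*)$.

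The main technical obstacle is bookkeeping of measurability and regularity in the player index $\alpha$: one must verify that $\alpha \mapsto S^{*,\alpha}$ and $\alpha \mapsto \mu^{*,\alpha}$ belong to $\mathcal{C}([0,T], L^2(I,\mathbb{R}^d))$ so that the graphon integrals and the interchange of $\mathbb{E}$ with $\mathbb{W}$ are legitimate in the mild-solution sense. This is handled via the boundedness of $\mathbb{W}$ on $L^2(I,\mathbb{R}^d)$, a (stochastic) Fubini argument, and Gronwall-type estimates propagating regularity forward through the $\mu^{*,\alpha}$ ODE, following the mild-solution framework of \cite{gao2021lqggraphonmeanfield}.
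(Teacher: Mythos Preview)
The paper does not give its own proof of this proposition; it simply cites \cite[Proposition 2]{gao2021lqggraphonmeanfield}. Your proposal supplies exactly the standard argument one would expect that reference to contain: solve the per-player inhomogeneous LQ problem by a quadratic-affine value function ansatz (or completion of squares), read off the affine feedback, then close the loop by taking means and applying $\mathbb{W}$ to recover the forward equation for $Z^*$; uniqueness follows from uniqueness of \eqref{eq:semiexplicit_ODEs}. This is correct in outline and matches the approach of \cite{gao2021lqggraphonmeanfield}, which the paper explicitly defers to.

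One small caution: your remark that ``$\eta^\alpha$ is identified with $S^{*,\alpha}$ under the sign convention implicit in \eqref{eq:systemcontrol}'' is doing real work. With the ansatz $V(t,x)=x^\top P_t x + 2x^\top \eta^\alpha_t + c^\alpha_t$, the terminal condition from $(x-\bar H Z_T^\alpha)^\top \bar Q(x-\bar H Z_T^\alpha)$ gives $\eta^\alpha_T = -\bar Q\bar H Z_T^\alpha$, while \eqref{eq:semiexplicit_ODEs} has $S^{*,\alpha}_T = \bar Q\bar H Z_T^{*,\alpha}$. So the identification is $S^{*,\alpha}=-\eta^\alpha$ (and the feedback formula still yields $G^{*,\alpha}=-R^{-1}B^\top S^{*,\alpha}$). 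In a written proof you should make this explicit rather than leave it to the reader, since the drift term in the $S^*$ equation also carries this sign. Otherwise the argument is complete and consistent with the mild-solution framework the paper invokes.
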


\begin{remark}[\textbf{Structure of NE policies}]
\label{rmk:NE_policy}
 Proposition \ref{proposition:existenceANDuniqueness} shows that at the NE, each player utilizes an affine policy that depends   on their own state but is independent of other players' states. Moreover, all players' policies share the same slope function, with each having a player-specific intercept term that captures the individual player's dependence on the equilibrium graphon aggregate. This observation is essential for designing efficient policy gradient methods to solve LQ-GMFGs;
  see Remarks \ref{rmk:complexity_policy}
  and \ref{rmk:gradient_normalization}. 
\end{remark}

Under appropriate conditions on the coefficients, \cite{gao2021lqggraphonmeanfield} establishes the well-posedness of \eqref{eq:semiexplicit_ODEs} using the Banach fixed-point theorem, which in turn implies the well-posedness of NEs for LQ-GMFGs.
In the sequel, we design policy gradient   algorithms to compute this Nash equilibrium and establish their convergence under similar conditions on the model coefficients.
The precise conditions will be stated in Assumption \ref{assumption:convergence} before   presenting the convergence results.

\subsection{Policy optimization algorithm    for LQ-GMFGs}

This section proposes a policy optimization  method to compute the NE of the LQ-GMFG. The algorithm employs affinely parameterized policies and seeks the NE policy through a bilevel optimization procedure: it iteratively updates all players’ policies via gradient descent with a fixed graphon aggregate, and then updates the graphon aggregate based on the latest policy parameters.

 \paragraph{Policy class.}

 We consider the following class  of affine   policies,
motivated by  the form of the NE policy  given  in \eqref{eq:systemcontrol}
(see Remark \ref{rmk:NE_policy}): 
\begin{equation}
\label{eq:policy_parameterization}
 \mathcal V  \coloneqq 
 \left\{\phi_\theta=(\phi^\alpha_\theta)_{\alpha\in I}
 \,\middle\vert\, 
\begin{aligned}
&\phi^{\alpha}_\theta(t,x)= K_t x+G^\alpha_t,
\quad 
 \forall
(t,x)\in  [0,T]\times \mathbb R^d, 
 \alpha\in I, 
\\
&
\theta= \big(K,(G^\alpha)_{\alpha\in I}\big) \in L^2([0,T], \mathbb{R}^{k \times d}) \times L^2([0,T], L^2(I, \mathbb{R}^k))
 \end{aligned}
 \right\}.
\end{equation}
As  $R \in \mathcal{C}([0,T], \mathbb{S}_{+}^k)$, 
one can easily verify that 
if   \eqref{eq:semiexplicit_ODEs} has a unique solution,
then the NE policy given by \eqref{eq:systemcontrol}
lies in the class $ \mathcal V$. 
In the sequel, we   identify $\phi\in \mathcal V$ 
with  its parameter 
$\theta= \big(K,(G^\alpha)_{\alpha\in I}\big)
\in L^2([0,T], \mathbb{R}^{k \times d}) \times L^2([0,T], L^2(I, \mathbb{R}^k))$. 
We call $K$  and $G^\alpha$ the slope and  intercept parameters of player $\alpha$'s policy, respectively. 

\begin{remark}[\textbf{Efficient  policy parameterization}]
\label{rmk:complexity_policy}
The policy class \eqref{eq:policy_parameterization} exploits the structure of GMFGs and yields a more efficient  policy parameterization 
than conventional approaches, especially 
for  large-scale games with many players and high-dimensional state spaces. 

To illustrate the difference,   consider a finite-horizon $N$-player LQ game where each player controls a $d$-dimensional state process. 
In this setting,  the conventional  approach    would   parameterize    player $i$'s policy,  $   i=1,\ldots, N $,  in  a linear  form $\phi^i(t,x) = K^i_t x$, where $K^i: [0,T] \to \mathbb{R}^{k \times Nd}$ is a
 player-specific 
 feedback function      depending    on all players' states     (see, e.g., \cite{hambly_nplayer}). The resulting  parameter space is  $L^2([0,T], \mathbb{R}^{k \times Nd})^N \cong L^2([0,T], \mathbb{R}^{N^2kd})$,
 whose complexity grows quadratically with respect to  the number of players $N$,
  with the state dimension $d$  acting as a multiplicative constant.
 
 In contrast, with the parameterization in \eqref{eq:policy_parameterization}, each player's policy 
 performs feedback control 
  based only on their individual state,    determined by the feedback function $K$, while the influence of other players is captured by the player-specific intercept term $G^\alpha$. Crucially, the feedback function $K$ is shared across all players. Therefore, 
  for the $N$-player system obtained by discretizing the index set  $I$ of the LQ-GMFG,
  the parameter space is $L^2([0,T], \mathbb{R}^{k \times d}) \times L^2([0,T], \mathbb{R}^k)^N \cong L^2([0,T], \mathbb{R}^{k(d+N)})$. 
The resulting  complexity grows linearly with respect to $N$ and, importantly, the growth rate is independent of the state dimension $d$, leading to a   more efficient policy parameterization, particularly for large   $d$ and $N$.

Furthermore, the shared feedback function $K$ 
allows for designing  learning algorithms that apply a common gradient update to $K$ across all players,
which  further enhances computational efficiency in searching for the NE policy; see  Remark
\ref{rmk:gradient_normalization} for more details.

\end{remark}

\paragraph{Optimization costs for    best response policies.}  

To search an NE  policy within the policy class $\mathcal V$,  an essential step is to compute each    player's best response policy for a fixed graphon aggregate.

Specifically, given a graphon aggregate $Z=(Z^\alpha)_{\alpha \in I}\in \mathcal{C}([0,T], L^2(I, \mathbb{R}^d))$,
player $\alpha \in I$ considers  minimizing the following cost functional 
 \begin{align}
          \label{eq:cost}
 \begin{split}
       &J^\alpha(K, G^\alpha, Z^\alpha) 
       \\
        &= \mathbb{E}\Bigg[  \int_0^T \left[ \left(X_t^{\theta,\alpha} - H Z_t^\alpha \right)^\top Q  \left(X_t^{\theta,\alpha} - H Z_t^\alpha \right)  
+ \left(K_t X_t^{\theta,\alpha} + G_t^\alpha \right)^\top R  \left(K_t X_t^{\theta,\alpha} + G_t^\alpha \right) \right] \, dt 
         \\
        &\quad  + \left(X_T^{\theta,\alpha} - \bar{H} Z_T^\alpha \right)^\top \bar{Q} \left(X_T^{\theta,\alpha} - \bar{H} Z_T^\alpha \right) \Bigg]
 \end{split}       
\end{align}
over all ${\theta}\coloneqq (K,G^\alpha)\in L^2([0,T], \mathbb{R}^{k \times d})  \times  L^2([0,T], \mathbb{R}^{k})$,
where for each ${\theta}\coloneqq (K,G^\alpha)$, $X^{\theta,\alpha} $
satisfies the following dynamics:
\begin{equation}
\label{eq:state_player_alpha}
    dX_t= \left((A +B  K_t) X_t  + B  G^\alpha_t   + \Bar{A}  Z_t^{\alpha}\right) \, dt + D  \, dB_t^\alpha,
    \quad 
    t\in [0,T]; 
     \quad X_0 = \xi^\alpha. 
\end{equation}
 
 Note that we aim to  optimize the cost functional \eqref{eq:cost} over affine policies that involve both the slope   $K$ and the intercept $G^\alpha$.
This is in contrast to most existing works on PG  methods for LQ control problems, which   focus on   linear policies without an intercept term (i.e., $G^\alpha \equiv 0$) (see, e.g., \cite{fazel_global_2018, giegrich_convergence_2022, hambly_policy_2021} and references therein).
Allowing for general affine policies introduces significant challenges in the landscape analysis, owing to the nonlinear coupling between $K$ and $G^\alpha$ in the cost functional $J^\alpha $.
In particular, it becomes necessary to carefully characterize how $G^\alpha$ affects the geometry of the map $K \mapsto J^\alpha(K, G^\alpha, Z^\alpha)$, and conversely, how $K$ influences the landscape of the map $G^\alpha \mapsto J^\alpha(K, G^\alpha, Z^\alpha)$.

To characterize the   contributions of $K$ and $G^\alpha$, 
we establish a key decomposition of    the cost functional $J^\alpha$  into   $J^\alpha_1(K)+J^\alpha_2(K, G^\alpha, Z^\alpha)$.
The term $J^\alpha_1$ depends only on $K$, while the term $J^\alpha_2$ depends on both $K$ and $G^\alpha$, 
but its minimum with respect to $G^\alpha$ is independent of $K$.
This decomposition is crucial for   both the design and the convergence analysis of the policy optimization   algorithms (see Remark \ref{rmk:cost_decomposition}).

To this end, 
for a given ${\theta}\coloneqq (K,G^\alpha)$, let  $X^{\theta,\alpha} $ be the corresponding state process satisfying 
\eqref{eq:state_player_alpha}. 
Define the state mean vector $\mu^{\theta,\alpha}_t \coloneqq \mathbb E[X^{\theta, \alpha}_t]$ for all $t\in [0,T]$, which satisfies 
\begin{equation}
\label{eq:mean}
    \begin{split}
        &\frac{\partial \mu_t}{\partial t} = (A  + B  K_t) \mu_t + B  G_t^\alpha + \Bar{A}  Z_t^\alpha, \quad
            t\in [0,T]; \quad 
         \mu_0= \mathbb{E}[\xi^\alpha].
    \end{split}
\end{equation}
Similarly, define the state covariance process $\vartheta_t^{\theta, \alpha} \coloneqq \operatorname{Cov}[X_t^{\theta,\alpha}] = \mathbb{E}[X^{\theta, \alpha}_t(X^{\theta, \alpha}_t)^\top ] - \mu_t^{\theta,\alpha}(\mu_t^{\theta,\alpha})^\top$ for all $t\in [0,T]$, which satisfies 
\begin{equation}
\label{eq:variance}
    \frac{\partial \vartheta_t}{\partial t} = (A  + B  K_t) \vartheta_t  + \vartheta_t  (A  + B  K_t)^\top + D D^\top, \quad 
    t\in [0,T]; \quad \vartheta_0 =  \operatorname{Cov}[\xi^\alpha].
\end{equation}
In the sequel, we   write $\vartheta^{\theta, \alpha}=\vartheta^{K, \alpha}$
to highlight the fact that $\vartheta^{\theta, \alpha}$ is independent of $G^\alpha$.

Using the processes $\mu^{\theta,\alpha}$ and   $\vartheta^{K, \alpha}$,
define the following cost functionals $J_1^\alpha$ and $J_2^\alpha$ by
\begin{align}
\begin{split}
\label{eq:J1_cost}
   & J_1^\alpha(K) \coloneqq \int_0^T \mathrm{tr}\left((Q + K_t^\top R  K_t)
     \vartheta_t^{K,\alpha} \right) \, dt + \mathrm{tr}(\bar{Q} \vartheta_T^{K,\alpha}),
\end{split}
\\
\begin{split}
\label{eq:J2_cost}
 & J_2^\alpha(K,G^\alpha,Z^\alpha) 
        \\
        &  \coloneqq  \int_0^T \left[ \left(\mu_t^{\theta, \alpha} - H Z_t^\alpha\right)^\top Q  \left(\mu_t^{\theta, \alpha} - H  Z_t^\alpha\right) + \left(K_t \mu_t^{\theta, \alpha} + G_t^\alpha\right)^\top R  \left(K_t \mu_t^{\theta, \alpha} + G_t^\alpha\right) \right] \, dt \\
        &\quad + \left(\mu_T^{\theta, \alpha} - \Bar{H} Z_T^\alpha\right)^\top \bar{Q} \left(\mu_T^{\theta, \alpha} - \Bar{H} Z_T^\alpha\right).
\end{split}
\end{align}
Note that it is essential to formulate 
the cost functional
  $J_1^\alpha$   using  the state covariance $\vartheta^{K,\alpha}$,
instead of  using  the  state second moment matrix $\Sigma^{\theta, \alpha} = \mathbb{E}[X^{\theta, \alpha}_t (X^{\theta, \alpha}_t)^\top]$ as in  most of the existing literature 
for LQ control problems with  linear policies (e.g., \cite{fazel_global_2018, giegrich_convergence_2022, hambly_policy_2021}).
This is  because, in the present setting with affine policies, $\Sigma^{\theta, \alpha}$ depends on the nonzero intercept parameter $G^\alpha$, 
and hence does not yield a cost functional   depending only on $K$.

The following theorem expresses  the cost functional $J^\alpha$ using $J^\alpha_1$ and $J^\alpha_2$.
The proof  
  is given   in Section \ref{subsection:decomp_proof}.

\begin{theorem}
\label{theorem:cost_decomposition}
    Let $\alpha \in I$ and $Z^\alpha \in \mathcal{C}([0,T], \mathbb{R}^d)$.
    For all 
 $K\in L^2([0,T], \mathbb{R}^{k \times d})$ and $ G^\alpha\in L^2([0,T], \mathbb{R}^{k})$,
\begin{equation}
\label{eq:cost_decomposition}
 J^\alpha(K,G^\alpha, Z^\alpha) =  J_1^\alpha(K) + J_2^\alpha(K,G^\alpha,Z^\alpha),
\end{equation} 
and $\inf_{G^\alpha \in L^2([0,T], \mathbb{R}^{k})} J_2^\alpha(K,G^\alpha,Z^\alpha)$ is independent of $K$. 
 Consequently, if
  $$
  K^*\in \argmin_{K\in L^2([0,T], \mathbb{R}^{k \times d})} J_1^\alpha(K),
  \quad \text{and} \quad
G^{*, Z, \alpha} \in \argmin_{G^\alpha \in L^2([0,T], \mathbb{R}^{k})}J_2^\alpha(K^*,G^\alpha,Z^\alpha),
$$
 then 
 $
 J^\alpha(K,G^\alpha, Z^\alpha) \ge J^\alpha(K^*,G^{*, Z, \alpha}, Z^\alpha) 
 $ 
for all 
 $(K,G^\alpha)\in L^2([0,T], \mathbb{R}^{k \times d})\times L^2([0,T], \mathbb{R}^{k})$.

       \end{theorem}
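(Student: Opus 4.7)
The plan is to exploit the standard decomposition of a quadratic expectation into a ``mean part'' plus a ``fluctuation part,'' applied pointwise in $t$, and then to reparameterize the mean-part minimization to eliminate the $K$-dependence.

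First I would write $X_t^{\theta,\alpha} = \mu_t^{\theta,\alpha} + \delta_t^{\theta,\alpha}$, where $\delta_t^{\theta,\alpha}\coloneqq X_t^{\theta,\alpha}-\mu_t^{\theta,\alpha}$ is mean-zero with $\mathbb E[\delta_t(\delta_t)^\top] = \vartheta_t^{K,\alpha}$. For any deterministic matrix $M\in \bar{\mathbb S}^d_+$, deterministic vector $c\in\mathbb R^d$, and random vector $v=\mu+\delta$ with $\mathbb E[\delta]=0$, I would use the identity $\mathbb E[(v-c)^\top M (v-c)] = (\mu-c)^\top M(\mu-c) + \mathrm{tr}(M\,\mathbb E[\delta\delta^\top])$. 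Applied to the state cost $(X_t - HZ_t)^\top Q (X_t - HZ_t)$, this gives a mean term $(\mu_t^{\theta,\alpha}-HZ_t^\alpha)^\top Q (\mu_t^{\theta,\alpha}-HZ_t^\alpha)$ plus $\mathrm{tr}(Q\vartheta_t^{K,\alpha})$. For the control cost, I note that $U_t - \mathbb E[U_t] = K_t\delta_t$, so $\mathbb E[(K_tX_t+G^\alpha_t)^\top R (K_tX_t+G^\alpha_t)] = (K_t\mu_t^{\theta,\alpha}+G^\alpha_t)^\top R(K_t\mu_t^{\theta,\alpha}+G^\alpha_t) + \mathrm{tr}(K_t^\top R K_t \vartheta_t^{K,\alpha})$. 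The terminal term decomposes analogously. Gathering the trace contributions yields exactly $J_1^\alpha(K)$ (which depends on $G^\alpha$ only through the fact that the covariance ODE \eqref{eq:variance} is independent of $G^\alpha$), while collecting the mean contributions yields $J_2^\alpha(K,G^\alpha,Z^\alpha)$. This establishes \eqref{eq:cost_decomposition}.

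For the second assertion, I would perform the change of variable $V_t \coloneqq K_t\mu_t^{\theta,\alpha}+G_t^\alpha$ at the level of the deterministic ODE \eqref{eq:mean}. Substituting shows that $\mu^{\theta,\alpha}$ solves $\dot\mu_t = A\mu_t + BV_t + \bar A Z_t^\alpha$ with $\mu_0 = \mathbb E[\xi^\alpha]$, which depends on the pair $(K,G^\alpha)$ only through $V$. Conversely, given any $V\in L^2([0,T],\mathbb R^k)$, one can solve this linear ODE uniquely for a $\mathcal C([0,T],\mathbb R^d)$-path $\eta^V$, and for any fixed $K\in L^2([0,T],\mathbb R^{k\times d})$, setting $G^\alpha \coloneqq V - K\eta^V$ produces an $L^2$ intercept (because $\eta^V$ is bounded so $K\eta^V\in L^2$) whose associated mean equals $\eta^V$ and whose $V$ equals the prescribed one. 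Hence, at fixed $K$, the map $G^\alpha\mapsto V$ is a surjection onto $L^2([0,T],\mathbb R^k)$, and $J_2^\alpha(K,G^\alpha,Z^\alpha)$ can be rewritten purely in terms of $(V,\eta^V,Z^\alpha)$ with no residual $K$-dependence. Taking the infimum over $G^\alpha$ is therefore equivalent to taking the infimum of this $K$-free functional over $V\in L^2([0,T],\mathbb R^k)$, which is independent of $K$.

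The third assertion then follows by a short chain of inequalities: for any $(K,G^\alpha)$,
\begin{equation*}
J^\alpha(K,G^\alpha,Z^\alpha) = J_1^\alpha(K)+J_2^\alpha(K,G^\alpha,Z^\alpha) \ge J_1^\alpha(K^*) + \inf_{G'}J_2^\alpha(K,G',Z^\alpha) = J_1^\alpha(K^*)+\inf_{G'}J_2^\alpha(K^*,G',Z^\alpha),
\end{equation*}
where the inequality uses optimality of $K^*$ for $J_1^\alpha$ and the equality uses the $K$-independence of the infimum; the right-hand side equals $J^\alpha(K^*,G^{*,Z,\alpha},Z^\alpha)$ by definition of $G^{*,Z,\alpha}$ and \eqref{eq:cost_decomposition}.

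The main obstacle I anticipate is making the change-of-variable step in the second paragraph fully rigorous in the function-space setting: one must verify that for $K\in L^2([0,T],\mathbb R^{k\times d})$, the solution $\mu^{\theta,\alpha}$ of \eqref{eq:mean} is well-defined and continuous (so that $K\mu^{\theta,\alpha}\in L^2$ and the reparameterization stays inside the admissible class), and that the infimum attained by the reparameterized problem is indeed realized so the claim of $K$-independence is unambiguous. Beyond this, the proof is essentially bookkeeping of the mean/covariance split.
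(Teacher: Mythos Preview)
Your proof is correct. The decomposition step and the final chain of inequalities match the paper's argument essentially line for line. Where you differ is in establishing the $K$-independence of $\inf_{G^\alpha}J_2^\alpha(K,G^\alpha,Z^\alpha)$: the paper proves this via Proposition~\ref{proposition:optimG_fixKZ}, which solves the deterministic LQ problem \eqref{eq:J2_controlproblem} by writing down the value function $v^{Z,\alpha}(t,\mu)=\mu^\top P_t^*\mu+\mu^\top\Psi_t^{Z,\alpha}+\varphi_t^{Z,\alpha}$ and observing that the three components satisfy ODEs free of $K$. Your change of variables $V=K\mu+G^\alpha$ is more elementary and self-contained --- it avoids any Riccati machinery and makes the $K$-independence transparent at the level of the control problem itself, without needing to know that the infimum is attained. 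The paper's route, on the other hand, also delivers the explicit form \eqref{eq:OPTIMALG_P2} of the optimal intercept, which is subsequently needed in the convergence analysis to the NE (Section~\ref{subsection:convergence_analsis_to_NE}). Your closing worry about realizability of the infimum is unfounded for the statement as written: your reparameterization already gives equality of the two infima regardless of attainment, and the theorem's final assertion is stated conditionally on the existence of $K^*$ and $G^{*,Z,\alpha}$.
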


\begin{remark}[\textbf{Cost Decomposition}]

\label{rmk:cost_decomposition}

 The cost decomposition 
\eqref{eq:cost_decomposition}
 provides a foundation for designing a sequential policy gradient   algorithm to compute the slope and intercept parameters of each player's best response policy.  
 Specifically, one first applies a gradient descent algorithm to minimize $J^\alpha_1 $,   obtaining an approximate slope parameter. Then, fixing this slope,  a second gradient descent algorithm is used to minimize 
$J^\alpha_2 $  with respect to  $G^\alpha $, thereby determining the intercept parameter.

To the best of our knowledge, this is the first work to establish the cost decomposition \eqref{eq:cost_decomposition} for finite-horizon LQ-GMFGs with heterogeneous players.
In contrast, similar decompositions have previously been developed only for infinite-horizon problems with homogeneous agents,   under the restriction to \emph{finite-dimensional} stationary policies and additional structural assumptions on the model. Specifically, prior work either assumes a full decoupling of the contributions of $K$ and $G^\alpha$ to   the total cost $J$, allowing them to be optimized independently \cite{carmona_linear-quadratic_2019,frikha_full_2024}, or relies on cases where the minimizer of $J_2$  can be solved explicitly \cite{wang2021global}.
In this work, we remove these structural assumptions and analyze finite-horizon problems over infinite-dimensional policy spaces, allowing for player-dependent and time-dependent policies.

\end{remark}

\paragraph{Policy optimization  algorithms for LQ-GMFG.}
 
 The    algorithm  proceeds  by alternating between the following two steps: (i) for a given population distribution, apply a sequential policy gradient (PG) update to approximate each player's best response policy
 (see Remark \ref{rmk:cost_decomposition}); (ii) update the population distribution  using the approximate best response policies obtained from the previous iteration.
For simplicity,   we present the   algorithm assuming  that agents have full knowledge of the model coefficients,
and   refer the reader to Section \ref{section:numerics} for model-free extensions. 

 Fix an arbitrary population state mean $\mu=(\mu^\alpha)_{\alpha \in I}\in  \mathcal{C}([0,T], L^2(I, \mathbb{R}^d))$,
and  let $Z=(Z^\alpha)_{\alpha \in I}\in  \mathcal{C}([0,T], L^2(I, \mathbb{R}^d))$ be the corresponding 
  graphon aggregate defined by \eqref{eq:graphon_aggregate}.

  The algorithm first employs preconditioned gradient descent on the cost functional 
$J^\alpha_1$    to compute the slope parameter of each player's best response policy.
 More precisely, 
for each initial guess $K^{(0)} \in L^\infty([0,T], \mathbb{R}^{k \times d})$
and stepsize $\eta_K>0$, consider 
$(K^{(\ell)})_{\ell\in \mathbb N_0} \subset  L^2([0,T], \mathbb{R}^{k \times d})$
such that for all $\ell\in \mathbb{N}_0$,
\begin{equation}
    \label{eq:P1_graddescent}
    K_t^{(\ell+1)} \coloneqq K_t^{(\ell)} - \eta_K 
 (\nabla_K J_1^\alpha(K^{(\ell)}))_t (\vartheta_t^{(\ell),\alpha})^{-1}, 
 \quad \textnormal{a.e.~$t\in [0,T]$},
\end{equation}
where $\nabla_K J_1^\alpha(K^{(\ell)})$ is the  (Fr\'echet) derivative of $J^\alpha_1$ at $K^{(\ell)}$
and $\vartheta^{{(\ell)},\alpha}$ is the associated state covariance defined by \eqref{eq:variance} with $K=K^{(\ell)}$.
 The gradient   $\nabla_K J_1^\alpha$ can be expressed using the model coefficients as follows (Lemma \ref{lemma:K_cost_derivative}):
\begin{align}
\label{eq:J1_K}
 \nabla_K J_1^\alpha(K^{(\ell)}) = 2 (B^\top P^{(\ell)} + R K^{(\ell)})\vartheta^{{(\ell)},\alpha},
\end{align}
where  
 $P^{(\ell)} \in \mathcal{C}([0,T], \mathbb{S}^d)$ satisfies for all $t\in [0,T]$,
\begin{equation}
\label{eq:ODE_P}
    \frac{\partial P_t}{\partial t} + (A + B  K^{(\ell)}_t)^\top P_t + P_t (A  + B  K^{(\ell)}_t) + Q  + (K^{(\ell)}_t)^\top R K^{(\ell)}_t = 0, \quad P_T = \bar{Q}.
\end{equation}

The update in \eqref{eq:P1_graddescent} normalizes the exact gradient $\nabla_K J^\alpha_1$    by the state covariance. This differs from existing PG algorithms in LQ control problems \cite{fazel_global_2018, giegrich_convergence_2022} or finite-player LQ games \cite{hambly_nplayer}, where the exact gradient is normalized using the state's second moment matrix.
This normalization step is crucial for both the algorithm efficiency and its convergence analysis, as discussed in the following remark.

\begin{remark}[\textbf{Gradient normalization in the update 
 of $K$}]
\label{rmk:gradient_normalization}

By   \eqref{eq:J1_K}, 
 normalizing the vanilla     gradient  $\nabla_K J_1^\alpha(K^{(\ell)}) $ 
 with the covariance matrix $\vartheta^{{(\ell)},\alpha}$ yields a gradient direction that 
 is independent of the player index $\alpha$.
 Consequently, the   update \eqref{eq:P1_graddescent}
 of the parameter  $K$
  is player-independent, unlike existing PG methods that require separate updates for each player's feedback matrix (see e.g.,  \cite{hambly_nplayer}). This shared update rule significantly enhances the algorithm  efficiency, especially in large-scale games   involving many  players.

Normalizing the gradient with the state covariance matrix is crucial for establishing the convergence   of the PG algorithm. 
Specifically, it  ensures that the iterates 
$(K^{(\ell)})_{\ell\in \mathbb N_0}$  remain uniformly bounded for all $\ell$  (Proposition \ref{prop:BOUND_K}), which subsequently guarantees that   all essential landscape properties of the optimization  objectives
 hold uniformly along the policy updates.
Unlike existing works that consider  finite-dimensional policy classes, it is challenging
to guarantee   these uniform landscape properties   for   unnormalized PG   algorithms in the present setting
with infinite-dimensional policy classes. This is due to    the lack of coercivity in the optimization objective,  as noted in \cite[Remark 2.3]{giegrich_convergence_2022}.

\end{remark}

After executing \eqref{eq:P1_graddescent} for 
multiple iterations, we then apply gradient descent to 
 $J^\alpha_2$ 
  to update the intercept parameters of all players' best response policies.
More precisely, 
for each initial guess  $G^{(0)}=(G^{(0),\alpha})_{\alpha\in I} \in L^2([0,T], L^2(I, \mathbb{R}^k))$ 
and stepsize $\eta_G>0$, consider 
$(G^{(\ell)})_{\ell\in \mathbb N_0} \subset  L^2([0,T], L^2(I, \mathbb{R}^k))$
such that for all $\ell\in \mathbb{N}_0$ and $\alpha \in I$,
\begin{equation}
    \label{eq:P2_graddescent}
    G_t^{(\ell+1), \alpha} \coloneqq G_t^{(\ell), \alpha} - \eta_G (\nabla_G J_2^\alpha(K^{(L)}, G^{(\ell), \alpha}, Z^\alpha))_t,
     \quad \textnormal{a.e.~$t\in [0,T]$},
\end{equation}
where  $\nabla_G J_2^\alpha $ is the  (Fr\'echet) derivative of $J^\alpha_2$ with respect to $G^\alpha$,
  $K^{(L)}$ is the  approximate slope parameter obtained from \eqref{eq:P1_graddescent} after $L$ iterations,
and $Z^\alpha$ is the fixed graphon aggregate. 

The gradient   $\nabla_G J_2^\alpha $ admits the following analytic representation in terms of model parameters 
(see Lemma \ref{lemma:derivative_G}):
\begin{equation}
    \label{eq:grad_J2}
\nabla_G J_2^\alpha(K^{(L)}, G^{(\ell), \alpha}, Z^\alpha) =  B^\top \zeta^{(\ell), \alpha} + 2R (K^{(L)} \mu^{(\ell), \alpha} + G^{(\ell), \alpha}),
\end{equation}
where 
$\mu^{(\ell), \alpha}$ is the   mean 
of player $\alpha$'s state process 
under the policy $(t,x)\mapsto  K^{(L)}_t x+G^{(\ell), \alpha}_t$
and the  graphon aggregate $Z^\alpha$ (see \eqref{eq:mean}), 
and   $\zeta^{(\ell),\alpha} \in \mathcal{C}([0,T], \mathbb{R}^d)$ satisfies
\begin{equation}
    \label{eq:ode_zeta_gradG}
    \begin{split}
        &\frac{\partial \zeta_t^\alpha}{\partial t} = -\big( (A+ B K_t^{(L)})^\top \zeta_t^\alpha + 2Q(\mu_t^{(\ell), \alpha} - H Z_t^\alpha) + 2 (K_t^{(L)})^\top R  (K_t^{(L)} \mu_t^{(\ell), \alpha} + G_t^{(\ell), \alpha)} \big), \\
        &\zeta_T^\alpha = 2 \bar{Q} (\mu_T^{(\ell), \alpha} - \Bar{H} Z_T^\alpha).
    \end{split}
\end{equation}
Note that  
although  the graphon aggregate   remains fixed during the policy updates, both $\mu^{(\ell), \alpha}$ and  $\zeta^{(\ell),\alpha} $   evolve due to the update of $G^{(\ell)}$ in \eqref{eq:P2_graddescent}.

Finally, 
after  executing \eqref{eq:P2_graddescent} for multiple iterations, we update the population mean (and consequently the graphon aggregate) using the approximate slope and intercept parameters. Rather than specifying an exact update rule, we assume access to a mean field oracle:
$$
\Hat{\Phi}: L^2([0,T], \mathbb{R}^{k \times d}) \times L^2([0,T], L^2(I, \mathbb{R}^k)) \to 
\mathcal C
([0,T],   L^2 (I, \mathbb{R}^d)),
$$ 
which  for any given policy $(K, G)$,
approximates the corresponding population mean
satisfying for all $\alpha \in I$,
\begin{equation}
\label{eq:mean_policy}
    \begin{split}
        &\frac{\partial \mu_t}{\partial t} = (A  + B  K_t) \mu_t + B  G_t^\alpha + \Bar{A}  \mathbb W[\mu_t](\alpha), \quad
            t\in [0,T]; \quad 
         \mu_0= \mathbb{E}[\xi^\alpha].
    \end{split}
\end{equation}
In the model-free case, the oracle can be realized by generating multiple state trajectories under the given policy and taking their empirical average.
 The precise condition of the oracle $\Hat{\Phi}$ 
 for the convergence analysis will be given 
   in Theorem \ref{theorem:CONVERGENCE_FINAL}.

 Algorithm \ref{algo}   summarizes  the   policy optimization  algorithm  presented above.

\begin{algorithm}[H]
\caption{Policy Optimization Algorithm for LQ-GMFGs}
\label{algo}
\begin{algorithmic}[1]
\STATE \textbf{Input:} Number of   outer   iterations $N$, 
numbers of     updates $(L^K_n)_{n=0}^{N-1}$ for $K$, 
numbers of    updates $(L^G_n)_{n=0}^{N-1}$ for $G$,
stepsizes  $\eta_K,\eta_G>0$, 
initial policy parameters $K^{(0)}$ and $G^{(0)}=(G^{(0), \alpha})_{\alpha \in I}$,
and initial population mean  $\mu^{(0)}=(\mu^{(0), \alpha})_{\alpha\in I}$.

\FOR{$n = 0, \dots, N - 1$}
    \STATE $Z^{(n),\alpha} \coloneqq \int_{I} W(\alpha, \beta) \mu^{(n), \beta} \, d\beta$   for all $\alpha \in I$.
    \STATE $K^{(n), (0)} \coloneqq K^{(n)}$ and 
 $G^{(n), (0)} \coloneqq G^{(n)}$.
    \FOR{$\ell = 1, \dots, L_n^K$}
        \STATE $K^{(n), (\ell)} = K^{(n), (\ell-1)} - \eta_K
         \nabla_K J_1^\alpha(K^{(n), (\ell-1)}) \big(\vartheta^{K^{(n), (\ell-1)} ,\alpha}\big)^{-1} $ with an arbitrary    $\alpha\in I$.
    \ENDFOR
    \STATE $K^{(n+1)} \coloneqq K^{(n), (L_n^K)}$
    \FOR{$\ell = 1, \dots, L_n^G$}
        \STATE $G^{(n), (\ell), \alpha} = G^{(n), (\ell-1), \alpha} - \eta_G   
              \nabla_{G} J^\alpha_2  (K^{(n+1)}, G^{(n), (\ell-1), \alpha}, Z^{(n),\alpha})$ for all $\alpha \in I$.
    \ENDFOR
    \STATE $G^{(n+1), \alpha} \coloneqq G^{(n), (L_n^G), \alpha}$ for all $\alpha \in I$.
    \STATE $\mu^{(n+1)} \coloneqq \Hat{\Phi}[K^{(n+1)}, G^{(n+1)}]$.
\ENDFOR
\STATE \textbf{Return:} $(K^{(N)}, G^{(N)}, \mu^{(N)})$
\end{algorithmic}
\end{algorithm}

Algorithm \ref{algo} extends the bilevel optimization framework  in \cite{wang2021global},
developed for infinite horizon MFGs with homogeneous players and stationary policies, to the   finite horizon GMFGs with heterogeneous players and time-dependent policies. 
 In contrast to \cite{wang2021global}, which requires an analytical solution for the best response intercept, Algorithm \ref{algo} uses a policy gradient method to approximate the intercept, as no closed-form minimizer exists for 
$J^\alpha_2$   in our setting.

\subsection{Convergence  of policy optimization   methods for LQ-GMFGs}

This section presents the linear convergence of Algorithm \ref{algo} to the NE policy. Specifically, we first establish   the convergence of the PG updates  for the best response slope parameter (Theorem \ref{theorem:K_CONVERGENCE}) and the intercept parameter 
(Theorem \ref{theorem:G_CONVERGENCE}). Then 
 we present the convergence to the NE policies and the corresponding graphon aggregate
(Theorem \ref{theorem:CONVERGENCE_FINAL}).

For the convergence analysis, 
observe that 
as $R \in \mathcal{C}([0,T], \mathbb{S}_{+}^k)$,
there exist    $\overline{\lambda}^R\ge 
\underline{\lambda}^R> 0$   such that 
$\lambda_{\mathrm{max}} (R(t))\le  \overline{\lambda}^R $ 
and 
$\lambda_{\mathrm{min}} (R(t))\ge  \underline{\lambda}^R $ for all $t\in [0,T]$.
We further  impose the following uniform boundedness and non-degeneracy conditions on  the initial state covariance matrices.
 
\begin{assumption}
     \label{assumption:RandCOV}
    
     There exists 
 $    \overline{C}_0^\vartheta, \underline{C}_0^\vartheta > 0$ such that 
   for all $\alpha\in  I$,
  the covariance matrix  
 $\vartheta_0^\alpha
 = \operatorname{Cov}[\xi^\alpha] $ 
 of 
 player $\alpha$'s initial state 
  $\xi^\alpha $ satisfies 
  $\lvert \vartheta_0^\alpha \rvert \le \overline{C}^\vartheta$ and $\lambda_{\mathrm{min}}(\vartheta_0^\alpha) \ge \underline{C}_\vartheta$.
 
 \end{assumption}

\paragraph{Convergence of the slope parameter}

As $R \in \mathcal{C}([0,T], \mathbb{S}_{+}^k)$,
the Riccati equation \eqref{eq:PI_riccati} has a unique solution 
$P^* \in \mathcal{C}([0,T], \mathbb{S}^d)$ (\cite[Chapter 6, Theorem 7.2]{yong_stochastic_1999}),
and 
 the NE slope parameter $K^*$  in  \eqref{eq:systemcontrol} is well-defined. 
 The following theorem shows that the iterates $(K^{(\ell)})_{\ell\in \mathbb N_0}$ generated by \eqref{eq:P1_graddescent}
  converge to $K^*$ with a linear rate.

\begin{theorem}
\label{theorem:K_CONVERGENCE}
Suppose Assumption \ref{assumption:RandCOV} holds.
Let $K^{(0)} \in L^\infty([0,T], \mathbb{R}^{k \times d})$,
and $(K^{(\ell)})_{\ell\in \mathbb N_0}\subset L^\infty([0,T], \mathbb{R}^{k \times d})$ be  defined by \eqref{eq:P1_graddescent} with 
$\eta_K>0$. There exists $C_1^K, C_2^K, C_3^K > 0$ such that for 
$        \eta_K \in \left(0, C^K_1 \right)
$, 
    \begin{equation*}
        \lVert K^{(\ell)} - K^* \rVert_{L^2([0,T])}^2 \le C_3^K \left(1 - \eta_K C_2^K \right)^{\ell} \lVert J_1^\cdot(K^{(0)}) - J_1^\cdot(K^*) \rVert_{L^2(I)},
    \end{equation*}
    where $J^\alpha_1$, $\alpha \in I$, is defined in \eqref{eq:J1_cost}.
\end{theorem}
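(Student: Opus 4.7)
The plan is to combine three ingredients: (i) uniform boundedness of the iterates, so that the cost landscape constants can be taken uniform along the trajectory; (ii) a gradient-dominance (Polyak–\L ojasiewicz) inequality for $J_1^\alpha$ together with an ``almost Lipschitz smoothness'' descent estimate, yielding geometric decay of the cost gap; and (iii) a quadratic growth inequality around $K^*$ that converts the cost gap back into the squared $L^2$ distance to $K^*$.

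First, I would invoke Proposition~\ref{prop:BOUND_K} to deduce that, for $\eta_K$ sufficiently small, the sequence $(K^{(\ell)})_{\ell\in\mathbb N_0}$ stays in a bounded subset of $L^\infty([0,T],\mathbb R^{k\times d})$. Combined with Assumption~\ref{assumption:RandCOV} and the Lyapunov equation \eqref{eq:variance}, this yields uniform upper and lower bounds on $\vartheta^{(\ell),\alpha}_t$ that are independent of $\alpha$, $\ell$, and $t$, so that the preconditioning $(\vartheta^{(\ell),\alpha})^{-1}$ in \eqref{eq:P1_graddescent} is well-defined and bounded. Using the explicit form \eqref{eq:J1_K}, the preconditioned gradient $\nabla_K J_1^\alpha(K^{(\ell)})(\vartheta^{(\ell),\alpha})^{-1} = 2(B^\top P^{(\ell)} + RK^{(\ell)})$ is independent of $\alpha$, which is what makes the update \eqref{eq:P1_graddescent} meaningful as a common update for all players.

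Next, on the bounded region containing the iterates I would establish a PL-type inequality of the form $J_1^\alpha(K) - J_1^\alpha(K^*) \le C_{\mathrm{PL}}\,\|B^\top P^K + RK\|_{L^2}^2$, obtained by the standard ``cost difference lemma'' for LQ problems: differentiating $t\mapsto \mathrm{tr}(P^K_t\vartheta^{K,\alpha}_t)$ along the optimal flow yields
\begin{equation*}
J_1^\alpha(K) - J_1^\alpha(K^*) = \int_0^T \mathrm{tr}\!\left((K_t - K^*_t)^\top R(K_t - K^*_t)\vartheta^{K,\alpha}_t\right)dt,
\end{equation*}
so, writing $K_t - K^*_t = R^{-1}(B^\top P^K_t + RK_t) - R^{-1}(B^\top P^K_t - B^\top P^*_t)$ and using that $\|P^K - P^*\|$ is itself controlled by the gradient norm via a linear Lyapunov-type estimate, one obtains the PL bound. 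I would then pair this with a descent lemma derived from Taylor-expanding $K\mapsto J_1^\alpha(K)$ along the direction $2(B^\top P^{(\ell)}+RK^{(\ell)})$: uniform bounds on $\vartheta^{(\ell),\alpha}$ and $P^{(\ell)}$ give a local Lipschitz constant $L$ for the map $K\mapsto \nabla_K J_1^\alpha(K)(\vartheta^{K,\alpha})^{-1}$, so that for $\eta_K\le 1/L$, $J_1^\alpha(K^{(\ell+1)}) - J_1^\alpha(K^{(\ell)}) \le -c\,\eta_K\|B^\top P^{(\ell)} + RK^{(\ell)}\|_{L^2}^2$. Combining with PL yields $J_1^\alpha(K^{(\ell)}) - J_1^\alpha(K^*) \le (1 - \eta_K C_2^K)^{\ell}(J_1^\alpha(K^{(0)}) - J_1^\alpha(K^*))$ with constants uniform in $\alpha$.

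Finally, I would convert cost gap into distance. From the identity above, together with the uniform lower bound $\vartheta^{K^*,\alpha}_t \succeq c I$ and $R\succeq \underline\lambda^R I$, I get a pointwise quadratic growth estimate $\underline\lambda^R c\,\|K - K^*\|_{L^2}^2 \le J_1^\alpha(K) - J_1^\alpha(K^*)$ for any $\alpha\in I$, which after integrating in $\alpha$ and applying Cauchy--Schwarz on $I=[0,1]$ gives
\begin{equation*}
\|K^{(\ell)} - K^*\|_{L^2}^2 \;\le\; C\int_I (J_1^\alpha(K^{(\ell)}) - J_1^\alpha(K^*))\,d\alpha \;\le\; C\,\|J_1^\cdot(K^{(\ell)}) - J_1^\cdot(K^*)\|_{L^2(I)}.
\end{equation*}
Combining with the geometric cost decay (which holds $\alpha$-uniformly, hence in $L^2(I)$) delivers the claimed bound.

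The hard part will be step (ii): carrying out the smoothness and PL estimates uniformly in $\alpha$ over the infinite-dimensional policy space $L^\infty([0,T],\mathbb R^{k\times d})$, despite the absence of coercivity of $J_1^\alpha$ noted in Remark~\ref{rmk:gradient_normalization}. This is where the covariance preconditioning is essential: it rescales the gradient flow into one whose ``effective'' smoothness constant depends only on the uniform bounds on $P^{(\ell)}$, $\vartheta^{(\ell),\alpha}$ and its inverse, so that the standard descent-lemma calculus can be pushed through in the function-space setting. Verifying that these bounds are indeed preserved along the iterates (so that the PL and smoothness constants can be chosen independently of $\ell$ and $\alpha$) is the technical core of the argument.
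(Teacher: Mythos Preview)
Your three-step architecture---uniform bounds on the iterates (Proposition~\ref{prop:BOUND_K}) and covariances, then PL plus a descent estimate to get geometric decay of the cost gap, then quadratic growth to recover the $L^2$ distance---is exactly the paper's strategy, and your argument is sound. Two points of comparison are worth making. First, your PL derivation takes a detour: you use the exact identity $J_1^\alpha(K)-J_1^\alpha(K^*)=\int_0^T\mathrm{tr}\big((K_t-K^*_t)^\top R(K_t-K^*_t)\vartheta^{K,\alpha}_t\big)\,dt$ and then decompose $K-K^*$ to reduce to a bound on $\|P^K-P^*\|$ via a Lyapunov/Gronwall argument. This works, but the paper avoids the detour entirely by applying the cost-difference formula (Lemma~\ref{lemma:P1_COSTDIFFERENCE}) with the roles of $K$ and $K^*$ \emph{reversed}, so that the covariance $\vartheta^{*,\alpha}$ appears and a direct completion of the square yields $J_1^\alpha(K^{(\ell)})-J_1^\alpha(K^*)\le \tfrac{M^{\vartheta,*}}{2\underline\lambda^R}\int_0^T|\mathcal D^{(\ell)}_{K,t}|^2\,dt$ without ever touching $P^K-P^*$ (Lemma~\ref{lemma:P1_GRADDOMINANCE_VARBOUND}). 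Second, a small slip: in your quadratic-growth step the identity you derived carries $\vartheta^{K,\alpha}$, not $\vartheta^{K^*,\alpha}$, so the lower bound you actually need is on $\vartheta^{(\ell),\alpha}$, which you already have from Lemma~\ref{lemma:BOUND_VAR}.
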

The precise  expressions of  the constants $C_1^K, C_2^K$ and $C_3^K$ are given in \eqref{eq:constant_convergence_K}.  The constants depend on $\underline{\lambda}^R, \overline{\lambda}^R$ and on the uniform bounds of the eigenvalues of the covariance matrices of the state processes controlled by $(K^{(\ell)})_{\ell\in \mathbb N_0}$.

The proof of Theorem \ref{theorem:K_CONVERGENCE} adapts the analysis for the single-agent LQ control problem in \cite{giegrich_convergence_2022} to our setting.
There are two key differences between our framework and \cite{giegrich_convergence_2022}. First, \eqref{eq:J1_cost} involves optimization over the covariance matrix $\vartheta^{K,\alpha}$, whereas \cite{giegrich_convergence_2022} optimizes the state second moment matrix $\Sigma^{\theta, \alpha} = \mathbb{E}[X^{\theta, \alpha}_t (X^{\theta, \alpha}_t)^\top]$. Second, and more importantly, the cost functional $J_1^\alpha$ is player-dependent because the state dynamics' covariance matrix depends on $\alpha$ through the initial state $\xi^\alpha$. Consequently, we must ensure that the landscape properties of the cost $K \mapsto J_1^\alpha(K)$ (such as gradient dominance and smoothness) hold uniformly across policy iterates and the player space, which is essential to obtain a player-independent linear convergence rate.  

Our analysis with time-dependent policies is also more intricate than that of the mean field game with stationary policies studied in \cite{wang2021global}. This complexity arises not only from player heterogeneity but also because $K \mapsto J_1^\alpha(K)$ satisfies a \emph{non-uniform almost smoothness condition,
depending on the current iterate $K$ through the state covariance matrix
(Lemma \ref{lemma:P1_ITERATION_CD})}.
We overcome this difficulty by proving
$(K^{(\ell)})_{\ell\in \mathbb N_0}$
is uniformly bounded 
(Proposition \ref{prop:BOUND_K})
and hence
the loss $  J_1^\alpha $ has   uniform landscape properties along the policy  updates.

\paragraph{Convergence of the  intercept parameter.} 

We proceed to analyze the convergence 
of \eqref{eq:P2_graddescent}
to the minimizer of $J_2^\alpha$.
For this analysis, 
we fix 
a slope parameter $K^{(\ell)}\in L^\infty([0,T], \mathbb R^{k\times d})$
obtained from \eqref{eq:P1_graddescent} 
with an initial guess
$K^{(0)}\in L^\infty([0,T], \mathbb R^{k\times d})$
and a stepsize $\eta_K\in (0,C^K_1)$,
after $\ell$ iterations for some $\ell\in \mathbb N_0$.

The following two propositions establish a novel uniform landscape property of the cost functional $  J^\alpha_2 $ with respect to $G^\alpha$. Specifically, 
for each  player index
$\alpha\in I$ and   graphon aggregate 
$Z^\alpha \in \mathcal{C}([0,T], \mathbb R^d)$, 
we prove that the map
\begin{equation}
\label{eq:J_2_ell_alpha}
L^2([0,T], \mathbb{R}^k) \ni G \mapsto 
J_2^{\ell,\alpha}(G)\coloneqq J_2^\alpha(K^{(\ell)}, G, Z^\alpha)\in \mathbb R  
\end{equation}  
is strongly convex  and  Lipschitz smooth, \emph{uniformly with respect to the slope parameter \( K^{(\ell)} \), the graphon aggregate \( Z^\alpha \), and  
 the player index \( \alpha \in I \).}

\begin{proposition}
\label{proposition:J2_strongconvexity}
    There exists  $m > 0$ such that for all 
    $\alpha \in I$,
    $\ell\in \mathbb N_0$, $Z^\alpha\in \mathcal{C}([0,T],\mathbb R^d)$ and 
    $G, G' \in L^2([0,T], \mathbb{R}^k)$,
      \begin{equation*}
        J_2^{\ell, \alpha}(G') - J_2^{\ell, \alpha}(G) \ge \langle G' - G, \nabla_G J_2^{\ell, \alpha}(G) \rangle_{L^2} + \frac{m}{2} \lVert G'  - G  \rVert_{L^2([0,T])}^2,
    \end{equation*}
    with the gradient 
    $\nabla_G J_2^{\ell, \alpha}(G) 
    =\nabla_G J_2^\alpha(K^{(\ell)}, G, Z^\alpha) $   
      defined as in \eqref{eq:grad_J2}.
      The value of  $m $ is given in Lemma \ref{lemma:J22_stronglyconvex}.
    
\end{proposition}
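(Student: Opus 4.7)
The plan is to exploit the quadratic structure: with $K^{(\ell)}$ and $Z^\alpha$ held fixed, the map $G \mapsto \mu$ from \eqref{eq:mean} is affine in $G$, so $G \mapsto J_2^{\ell,\alpha}(G)$ is a quadratic functional. Strong convexity therefore reduces to bounding its Hessian quadratic form from below, uniformly in $\alpha$, $\ell$, and $Z^\alpha$.

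First, I would fix $G, G' \in L^2([0,T], \mathbb{R}^k)$, set $\Delta G = G' - G$, and let $\delta\mu$ denote the difference of the associated mean processes. By linearity of \eqref{eq:mean}, $\delta\mu$ solves
\begin{equation*}
\frac{d\delta\mu_t}{dt} = (A(t) + B(t) K^{(\ell)}_t)\delta\mu_t + B(t)\Delta G_t, \quad \delta\mu_0 = 0,
\end{equation*}
which is independent of $\alpha$ and $Z^\alpha$ since the initial-condition and $\bar{A}Z^\alpha$ contributions cancel. Expanding $J_2^{\ell,\alpha}(G')$ and identifying zero-, first-, and second-order terms yields the exact second-order remainder
\begin{equation*}
J_2^{\ell,\alpha}(G') - J_2^{\ell,\alpha}(G) - \langle \Delta G, \nabla_G J_2^{\ell,\alpha}(G)\rangle_{L^2} = \int_0^T \bigl[\delta\mu_t^\top Q \delta\mu_t + (K_t^{(\ell)}\delta\mu_t + \Delta G_t)^\top R (K_t^{(\ell)}\delta\mu_t + \Delta G_t)\bigr] dt + \delta\mu_T^\top \bar{Q}\delta\mu_T.
\end{equation*}
Discarding the nonnegative $Q$ and $\bar{Q}$ terms and using $R(t) \succeq \underline{\lambda}^R I$, the remainder is bounded below by $\underline{\lambda}^R \|K^{(\ell)}\delta\mu + \Delta G\|_{L^2}^2$.

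To convert this into the desired lower bound in $\|\Delta G\|_{L^2}^2$, I would introduce the auxiliary variable $v_t := K^{(\ell)}_t \delta\mu_t + \Delta G_t$; substituting into the ODE above cancels the feedback term and gives the simpler system
\begin{equation*}
\frac{d\delta\mu_t}{dt} = A(t)\delta\mu_t + B(t) v_t, \quad \delta\mu_0 = 0,
\end{equation*}
whose coefficients no longer depend on $K^{(\ell)}$. A Gr\"onwall-type estimate then yields $\|\delta\mu\|_{L^\infty([0,T])} \le C_{A,B,T}\|v\|_{L^2}$ with a constant depending only on $A, B, T$. Writing $\Delta G = v - K^{(\ell)}\delta\mu$ and applying this estimate gives
\begin{equation*}
\|\Delta G\|_{L^2}^2 \le 2\|v\|_{L^2}^2 + 2T\, C_{A,B,T}^2 \|K^{(\ell)}\|_{L^\infty}^2 \|v\|_{L^2}^2,
\end{equation*}
from which strong convexity follows with $m = 2\underline{\lambda}^R / (2 + 2T\, C_{A,B,T}^2\|K^{(\ell)}\|_{L^\infty}^2)$.

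The main obstacle is uniformity in $\ell$: since $\|K^{(\ell)}\|_{L^\infty}$ sits in the denominator, a uniform lower bound for $m$ requires the slope iterates to be uniformly bounded in $L^\infty$. This is exactly the content of Proposition \ref{prop:BOUND_K}, which supplies a bound $\|K^{(\ell)}\|_{L^\infty} \le C_K$ independent of $\ell$. Substituting this in delivers a constant $m > 0$ depending only on the model coefficients, $T$, and the initial guess $K^{(0)}$, and in particular independent of $\alpha$ (absent from $\delta\mu$) and $Z^\alpha$ (cancelled by differencing), which is what the proposition claims.
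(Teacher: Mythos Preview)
Your argument is correct and is in fact more elementary than the paper's. Both proofs reduce to the same core estimate: with $v\coloneqq K^{(\ell)}\delta\mu+\Delta G=(\Tilde{\Gamma}+\mathrm{id})[\Delta G]$ (in the paper's notation of \eqref{eq:GAMMAtilde_operator}), one needs $\lVert\Delta G\rVert_{L^2}\le C\lVert v\rVert_{L^2}$ with $C$ independent of $\ell$, $\alpha$, $Z^\alpha$. The paper obtains this by invoking abstract Volterra theory: it shows the kernel of $\Tilde{\Gamma}$ is Hilbert--Schmidt and applies a resolvent bound from \cite{dechevski_sharp_1994} to control $\lVert(\Tilde{\Gamma}+\mathrm{id})^{-1}\rVert_{\mathrm{op}}$ (Lemma~\ref{lemma:resolvent}), yielding the constant $M_{\mathrm{res}}$ with a \emph{double} exponential in $T,\lVert A\rVert_{L^\infty},\lVert B\rVert_{L^\infty},C_0^K$. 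Your change of variables computes this inverse explicitly: substituting $v$ into the $\delta\mu$ equation cancels the feedback term and leaves $\dot{\delta\mu}=A\delta\mu+Bv$, whose coefficients no longer involve $K^{(\ell)}$; the bound then follows from a one-line Gr\"onwall estimate and produces a constant with only a \emph{single} exponential. So your route is both shorter and quantitatively sharper. The only point to flag is that your $m$ differs from the specific value announced via Lemma~\ref{lemma:J22_stronglyconvex}, but since the proposition asserts only the existence of a uniform $m>0$, this is immaterial to the proof itself.
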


\begin{proposition}
    \label{proposition:LSmooth}
    There exists   $L > 0$ such that
    for all $\alpha \in I$, $\ell\in \mathbb N_0$, $Z^\alpha\in \mathcal{C}([0,T],\mathbb R^d)$ and 
    $G, G' \in L^2([0,T], \mathbb{R}^k)$,
    \begin{equation*}
        J_2^{\ell, \alpha}(G') - J_2^{\ell, \alpha}(G) \le \langle G' - G, \nabla_G J_2^{\ell, \alpha}(G) \rangle_{L^2} + \frac{L}{2} \lVert G'  - G  \rVert_{L^2([0,T])}^2.
    \end{equation*}  
\end{proposition}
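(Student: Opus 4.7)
The plan is to exploit the fact that, for fixed slope parameter $K^{(\ell)}$ and graphon aggregate $Z^\alpha$, the map $G \mapsto \mu^{\theta,\alpha}$ defined by the linear ODE \eqref{eq:mean} with $\theta = (K^{(\ell)}, G)$ is affine, so the cost $J_2^{\ell,\alpha}$ is a quadratic functional of $G$. Lipschitz smoothness with some constant $L$ will therefore reduce to a uniform operator-norm bound on the Hessian of $G \mapsto J_2^{\ell,\alpha}(G)$, and the key task is to verify that such a bound can be chosen independently of $\alpha$, $\ell$, and $Z^\alpha$.

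To make this concrete, I would decompose $\mu^{\theta,\alpha} = \mathcal{L}^{(\ell)}[G] + \nu^{(\ell),\alpha,Z}$, where $\mathcal{L}^{(\ell)}: L^2([0,T], \mathbb{R}^k) \to \mathcal{C}([0,T], \mathbb{R}^d)$ sends $G$ to the mild solution of $\dot{\mu} = (A + BK^{(\ell)})\mu + BG$ with $\mu_0 = 0$, and $\nu^{(\ell),\alpha,Z} \in \mathcal{C}([0,T], \mathbb{R}^d)$ absorbs the contributions of $\mathbb{E}[\xi^\alpha]$ and $\bar{A}Z^\alpha$. Substituting into \eqref{eq:J2_cost} yields a representation
\[
J_2^{\ell,\alpha}(G) = \tfrac{1}{2}\langle G, \mathcal{H}^{(\ell)} G\rangle_{L^2} + \langle b^{(\ell),\alpha,Z}, G\rangle_{L^2} + c^{(\ell),\alpha,Z},
\]
where the self-adjoint Hessian operator $\mathcal{H}^{(\ell)}$ on $L^2([0,T], \mathbb{R}^k)$ depends only on the model coefficients and $K^{(\ell)}$, while all $\alpha$- and $Z^\alpha$-dependence is absorbed in $b^{(\ell),\alpha,Z}$ and $c^{(\ell),\alpha,Z}$. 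Concretely, $\mathcal{H}^{(\ell)}$ is built from the three quadratic-in-$G$ contributions: $\langle \mathcal{L}^{(\ell)}G, Q\mathcal{L}^{(\ell)}G\rangle_{L^2}$, $\langle K^{(\ell)}\mathcal{L}^{(\ell)}G + G, R(K^{(\ell)}\mathcal{L}^{(\ell)}G + G)\rangle_{L^2}$, and $(\mathcal{L}^{(\ell)}G)(T)^\top \bar{Q}(\mathcal{L}^{(\ell)}G)(T)$.

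To bound $\|\mathcal{H}^{(\ell)}\|_{\mathrm{op}}$ uniformly in $\ell$, I would invoke the uniform boundedness of $(K^{(\ell)})_{\ell \in \mathbb{N}_0}$ established in Proposition \ref{prop:BOUND_K}, which provides a constant $C_K$ with $\|K^{(\ell)}\|_{L^\infty} \le C_K$ for every $\ell$. A standard Gronwall argument, together with the continuity of $A$ and $B$ on $[0,T]$, then yields uniform estimates $\|\mathcal{L}^{(\ell)}G\|_{\mathcal{C}([0,T])} \le C_{\mathcal{L}} \|G\|_{L^2}$ and $|(\mathcal{L}^{(\ell)}G)(T)| \le C_T \|G\|_{L^2}$, with $C_{\mathcal{L}}, C_T$ depending only on $C_K$, $\|A\|_{L^\infty}$, $\|B\|_{L^\infty}$, and $T$. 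Combining these with the uniform upper bounds $\overline{\lambda}^R$ on $R$, $\|Q\|_{L^\infty}$, $|\bar{Q}|$ and $C_K$ itself, I would obtain an explicit constant $L$ with $\|\mathcal{H}^{(\ell)}\|_{\mathrm{op}} \le L$ for all $\ell \in \mathbb{N}_0$. The claimed inequality then follows immediately from the exact second-order Taylor identity for the quadratic functional $G \mapsto J_2^{\ell,\alpha}(G)$, which gives the remainder $\tfrac{1}{2}\langle G' - G, \mathcal{H}^{(\ell)}(G' - G)\rangle_{L^2} \le \tfrac{L}{2}\|G' - G\|_{L^2([0,T])}^2$.

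The main technical obstacle is to keep the Gronwall-type constants clean enough to ensure they are genuinely $\alpha$- and $Z^\alpha$-free; this is where it is essential to split off the inhomogeneous piece $\nu^{(\ell),\alpha,Z}$ \emph{before} computing the Hessian, so that all $\alpha$- and $Z^\alpha$-dependence migrates into the linear and constant parts and disappears when taking second derivatives in $G$. A secondary subtlety is the terminal term $|(\mathcal{L}^{(\ell)}G)(T)|$, which cannot be controlled directly by $\|\mathcal{L}^{(\ell)}G\|_{L^2}$; handling it requires the pointwise Gronwall estimate on $\mathcal{L}^{(\ell)}G$ noted above, combined with Cauchy--Schwarz in the Duhamel representation of the solution.
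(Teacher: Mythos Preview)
Your proposal is correct and follows essentially the same approach as the paper. The paper's proof uses the exact second-order Taylor expansion for the quadratic functional $J_2^{\ell,\alpha}$, writes the Hessian in terms of explicit Volterra operators $\Gamma$ and $\tilde{\Gamma}$ (your $\mathcal{L}^{(\ell)}[G]$ is their $\Gamma[BG]$), observes that the Hessian depends only on $K^{(\ell)}$ and not on $\alpha$ or $Z^\alpha$, and bounds the relevant operator norms uniformly via the bound $C_0^K$ from Proposition~\ref{prop:BOUND_K}---exactly the ingredients you identify.
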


The proof of Proposition \ref{proposition:J2_strongconvexity}
requires 
sophisticated functional analytic techniques. Due to the linearity of the map  
$L^2([0,T],\mathbb R^k)\ni  G\mapsto 
\mu^{(K^\ell, G),\alpha}\in \mathcal C([0,T],\mathbb R^d)$
(cf.~\eqref{eq:mean}), 
  the key step of the proof is to show that
\begin{align*}
 \begin{split}
 & G\mapsto   \int_0^T   \left(K^{(\ell)}_t \mu_t^{(K^{(\ell)}, G),\alpha} + G_t\right)^\top R  \left(K^{(\ell)}_t \mu_t^{(K^{(\ell)}, G),\alpha} + G_t\right)   \, dt
\end{split}
\end{align*}
is  strongly convex, uniformly with respect to $\ell$ and $\alpha$ (see Lemma \ref{lemma:J22_stronglyconvex}). We prove this  by viewing  
$G\mapsto 
\mu^{(K^\ell, G),\alpha}$ as a Volterra operator, and establishing a uniform bound of its resolvent
operator (Lemma \ref{lemma:resolvent}). 
  A key ingredient of this argument is the uniform boundedness of the iterates $(K^{(\ell)})_{\ell\in \mathbb N_0}$ (Proposition \ref{prop:BOUND_K}). 

 Using Propositions \ref{proposition:J2_strongconvexity}
 and 
 \ref{proposition:LSmooth}, 
 the following theorem establishes  the linear convergence of \eqref{eq:P2_graddescent}
 to the minimizer of 
$ G \mapsto J_2^\alpha(K^{(\ell)}, G, Z^\alpha) $.

\begin{theorem}
    \label{theorem:G_CONVERGENCE}
Let $m, L>0$ be the constants 
in 
Propositions \ref{proposition:J2_strongconvexity}
 and 
 \ref{proposition:LSmooth}, respectively. 
Let 
$G^{(0)}=(G^{(0),\alpha})_{\alpha \in I} \in  L^2([0,T], L^2(I,\mathbb R^{k}))$, 
$\eta_G\in (0,2/L)$,
 and 
 for each $i\in \mathbb N_0$,
 let 
 $G^{(i+1)}=(G^{(i+1),\alpha})_{\alpha \in I}\in   L^2([0,T], L^2(I,\mathbb R^{k}))$ be such that 
 $G^{(i+1), \alpha} = G^{(i), \alpha} - \eta_G  \nabla_G J_2^{\ell,\alpha}( G^{(i), \alpha})$ 
 with
 $J_2^{\ell,\alpha}$
 defined by \eqref{eq:J_2_ell_alpha}.
 Then for all  
 $\alpha \in I$
 and $i\in \mathbb N_0$,
    \begin{equation*}
        \lVert G^{(i), \alpha} - G^{*,\ell, \alpha} \rVert_{L^2([0,T])}^2 \le (1 - \eta_G m)^i \lVert G^{(0), \alpha} - G^{*,\ell, \alpha} \rVert_{L^2([0,T])}^2.
    \end{equation*}
where $G^{*, \ell, \alpha}=\argmin_{G\in L^2([0,T],\mathbb R^{k})}J_2^{\ell, \alpha}(G)$.
Consequently, for all $i\in \mathbb N_0$,
   \begin{equation*}
        \lVert G^{(i)} - G^{*,\ell} \lVert_{L_B^2}^2 \le (1 - \eta_G m)^i \lVert G^{(0)} - G^{*,\ell} \lVert_{L_B^2}^2,
        \quad \textnormal{with $G^{*,\ell}=(G^{*,\ell,\alpha})_{\alpha \in I}$.}
    \end{equation*}
\end{theorem}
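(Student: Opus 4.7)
The plan is to reduce the theorem to a classical contraction argument for gradient descent applied pointwise in $\alpha \in I$ to the objective $G \mapsto J_2^{\ell,\alpha}(G)$, and then lift the resulting per-$\alpha$ bound to the $L_B^2$ estimate by integrating in $\alpha$.

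For each fixed $\alpha \in I$ and $\ell \in \mathbb{N}_0$, I would work with $f_{\alpha,\ell} \coloneqq J_2^{\ell,\alpha}$ viewed as a functional on the Hilbert space $L^2([0,T], \mathbb{R}^k)$. By Proposition \ref{proposition:J2_strongconvexity}, $f_{\alpha,\ell}$ is $m$-strongly convex with $m$ independent of $\alpha, \ell$ and $Z^\alpha$, so it admits a unique minimizer $G^{*,\ell,\alpha}$ satisfying $\nabla_G f_{\alpha,\ell}(G^{*,\ell,\alpha}) = 0$. By Proposition \ref{proposition:LSmooth}, the gradient $\nabla_G f_{\alpha,\ell}$ is additionally $L$-Lipschitz, uniformly in $\alpha, \ell, Z^\alpha$. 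These are precisely the ingredients needed for the textbook gradient-descent contraction in a Hilbert space.

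The main computation is to expand
\begin{equation*}
G^{(i+1),\alpha} - G^{*,\ell,\alpha} = \bigl(G^{(i),\alpha} - G^{*,\ell,\alpha}\bigr) - \eta_G\bigl(\nabla_G f_{\alpha,\ell}(G^{(i),\alpha}) - \nabla_G f_{\alpha,\ell}(G^{*,\ell,\alpha})\bigr)
\end{equation*}
in $L^2([0,T],\mathbb{R}^k)$, giving three terms after squaring. Combining the monotonicity bound from $m$-strong convexity,
\begin{equation*}
\langle \nabla_G f_{\alpha,\ell}(G^{(i),\alpha}) - \nabla_G f_{\alpha,\ell}(G^{*,\ell,\alpha}),\, G^{(i),\alpha} - G^{*,\ell,\alpha}\rangle_{L^2} \geq m \lVert G^{(i),\alpha} - G^{*,\ell,\alpha}\rVert_{L^2}^2,
\end{equation*}
with the co-coercivity bound for the gradient of an $L$-smooth convex function (which controls the $\eta_G^2\lVert \nabla_G f_{\alpha,\ell}(G^{(i),\alpha})\rVert_{L^2}^2$ term by the cross term), one obtains, for $\eta_G$ in the stated range, the one-step contraction
\begin{equation*}
\lVert G^{(i+1),\alpha} - G^{*,\ell,\alpha}\rVert_{L^2([0,T])}^2 \leq (1 - \eta_G m)\lVert G^{(i),\alpha} - G^{*,\ell,\alpha}\rVert_{L^2([0,T])}^2.
\end{equation*}
Iterating this inequality $i$ times yields the first claim of the theorem.

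Since the contraction rate $(1 - \eta_G m)$ is uniform in $\alpha$, integrating the pointwise bound over $\alpha \in I$ and invoking Fubini immediately gives $\lVert G^{(i)} - G^{*,\ell}\rVert_{L_B^2}^2 \leq (1 - \eta_G m)^i \lVert G^{(0)} - G^{*,\ell}\rVert_{L_B^2}^2$. The only technical point is checking that $\alpha \mapsto G^{*,\ell,\alpha}$ is Bochner measurable and lies in $L^2([0,T], L^2(I,\mathbb{R}^k))$, which follows from the measurable dependence of $f_{\alpha,\ell}$ on $\alpha$ (through $Z^\alpha$ and the model data) together with uniform strong convexity, which makes the minimizer continuous in those parameters. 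Because the functional-analytic heavy lifting has already been done in Propositions \ref{proposition:J2_strongconvexity} and \ref{proposition:LSmooth}, I do not expect any genuine obstacle here: the proof is essentially the standard gradient-descent contraction argument carried out \emph{uniformly} in the parameter $\alpha$, with uniformity inherited from the uniform constants $m$ and $L$ established earlier.
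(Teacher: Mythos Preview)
Your proposal is correct and follows essentially the same route as the paper: apply the standard gradient-descent contraction argument in the Hilbert space $L^2([0,T],\mathbb{R}^k)$ pointwise in $\alpha$, using the uniform $m$-strong convexity and $L$-smoothness from Propositions~\ref{proposition:J2_strongconvexity} and~\ref{proposition:LSmooth}, and then integrate over $\alpha$ via Fubini. The only cosmetic difference is that the paper bounds $\eta_G^2\lVert\nabla_G J_2^{\ell,\alpha}(G^{(i),\alpha})\rVert_{L^2}^2$ through the descent lemma (function-value gap) combined with the first-order strong-convexity inequality, whereas you invoke co-coercivity; both are standard equivalent devices and lead to the same one-step contraction.
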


To the best of our knowledge,
this is the first result on the linear convergence of PG methods with \emph{affine policies}
for finite-horizon drift-controlled LQ problems. 
The constants 
  $L$ and $m$ in general
  depend exponentially on the  coefficients $A, B$, the time horizon $T$ and the uniform bound for the slope parameter iterates $(K^{(\ell)})_{\ell\in \mathbb N_0}$. 
Such an exponential dependence arises from using Gronwall's inequality
to estimate the state moments, and using   the Volterra resolvent bound in  
\cite[Theorem 3.1]{dechevski_sharp_1994}
for the cost landscape analysis.

\paragraph{Convergence  to   the NE policy.} 

Leveraging  the error bounds of \eqref{eq:P1_graddescent} and \eqref{eq:P2_graddescent} for the best response policy
established in 
 Theorems \ref{theorem:K_CONVERGENCE} and  \ref{theorem:G_CONVERGENCE}, 
we now   analyze the convergence of Algorithm \ref{algo} to the NE policy of the LQ-GMFG. 

To facilitate the convergence analysis, 
we impose the following smallness 
condition on the model coefficients. 

\begin{assumption}
\label{assumption:convergence}

Let   
$C_0^K>0$ be such that 
$\lVert K^{(\ell)} \rVert_{L^\infty} \le C_0^K$ for all $\ell\in \mathbb N_0$, where 
$ (K^{(\ell)})_{\ell\in \mathbb N_0}$ is   defined by \eqref{eq:P1_graddescent} with
$K^{(0)} \in L^\infty([0,T], \mathbb{R}^{k \times d})$ and 
$\eta_K>0$ (cf.~Proposition \ref{prop:BOUND_K}).
    It holds that  
    \begin{equation}
    \label{eq:M1}
        M_1 \coloneqq T(\lVert A \rVert_{L^\infty} + \lVert B \rVert_{L^\infty} C_0^K + \lVert \Bar{A} \rVert_{L^\infty} \lVert W \rVert_{L^2(I^2)}) < 1,
    \end{equation}
and that  
    \begin{equation*}
        \begin{split}
            M_2 &\coloneqq \frac{T \lVert B \rVert_{L^\infty}^2 \left( \lvert \Bar{Q} \rvert \lvert \Bar{H} \rvert + T \left( \lVert P^* \rVert_{L^\infty} \lVert \Bar{A} \rVert_{L^\infty} + \lVert Q \rVert_{L^\infty} \lVert H \rVert_{L^\infty} \right) \right) \lVert W \rVert_{L^2(I^2)}}{\underline{\lambda}^R (1 - T (\lVert A \rVert_{L^\infty} + \lVert B \rVert_{L^\infty} \lVert K^* \rVert_{L^\infty} + \lVert \Bar{A} \rVert_{L^\infty} \lVert W \rVert_{L^2(I^2)}))^2}<1,
        \end{split}
    \end{equation*}
    where
   $P^*\in \mathcal C([0,T],\mathbb S^d)$ is the solution to  \eqref{eq:PI_riccati}
and 
$K^*\in L^\infty([0,T], \mathbb R^{k\times d})$
is defined by
\eqref{eq:systemcontrol}.

\end{assumption}

\begin{remark}
    Assumption \ref{assumption:convergence} is analogue to the contraction assumption imposed in   \cite[Lemma 4]{gao2021lqggraphonmeanfield}
    to ensure the existence and uniqueness of NEs for LQ-GMFGs.
    Indeed, by  \cite[Lemma 4]{gao2021lqggraphonmeanfield}, a sufficient condition for the well-posedness of  NEs is  
 \begin{equation}  
\label{eq:contraction_cond}
       \begin{split}
           &\bar{M}_1  +  \bar{M}_2 < 1,
       \end{split}
   \end{equation}
   where
   \begin{equation*}
        \begin{split}
            &\bar{M_1} \coloneqq T (\lVert A \rVert_{L^\infty} + \lVert B \rVert_{L^\infty} \lVert K^* \rVert_{L^\infty}+ \lVert \Bar{A} \rVert_{L^\infty} \lVert W \rVert_{L^2(I^2)}), \\
            &\bar{M_2} \coloneqq \frac{T \lVert B \rVert_{L^\infty}^2 \left( \lvert \Bar{Q} \rvert \lvert \Bar{H} \rvert + T \left( \lVert P^* \rVert_{L^\infty} \lVert \Bar{A} \rVert_{L^\infty} + \lVert Q \rVert_{L^\infty} \lVert H \rVert_{L^\infty} \right) \right) \lVert W \rVert_{L^2(I^2)}}{\underline{\lambda}^R (1 - T (\lVert A \rVert_{L^\infty} + \lVert B \rVert_{L^\infty} \lVert K^* \rVert_{L^\infty}))}.
        \end{split}
   \end{equation*}
This ensures that the mapping from a given graphon aggregate to the graphon aggregate induced by all players' best response policies is a contraction.
  Assumption \ref{assumption:convergence}  is slightly stronger than \eqref{eq:contraction_cond}, because, in each outer iteration, Algorithm \ref{algo} updates the graphon aggregate using approximate slope parameters generated by the PG update \eqref{eq:P1_graddescent}, rather than the exact best-response policy (see Line 13 of Algorithm \ref{algo}). Note that similar contraction assumptions have been adopted to  analyze NEs of continuous-time GMFGs (e.g., \cite[(H8)]{Caines_2021}, \cite[Equation (36)]{gao2021lqggraphonmeanfield}, \cite[Equation (21)]{Gao_2021}, \cite[Assumption (A2)]{foguen-tchuendom_infinite_2024}) and to  design  learning algorithms for discrete-time GMFGs (e.g., \cite[Proposition 2]{cui2022learninggraphonmeanfield}, \cite[Assumption 2]{huo2023reinforcementlearningsbmgraphon}, \cite[Assumption 4.1]{zhang2023learningregularizedgraphonmeanfield}, \cite[Assumption 5.1]{zhou2024graphonmeanfieldgames}).
\end{remark}

The following theorem presents  the global convergence   of Algorithm \ref{algo} to the NE of LQ-GMFG.
Recall that 
$(K^*, G^*) \in L^\infty([0,T], \mathbb{R}^{k \times d}) \times L^2([0,T], L^2(I, \mathbb{R}^k))$ 
  are the equilibrium policy parameters 
  defined in \eqref{eq:systemcontrol},
  and 
    $\mu^* \in \mathcal{C}([0,T], L^2(I, \mathbb{R}^d))$ is the population state mean of the associated      state processes (see  \eqref{eq:equil_state}). 

{ 

\begin{theorem}
    \label{theorem:CONVERGENCE_FINAL}
    Suppose Assumptions \ref{assumption:RandCOV} and \ref{assumption:convergence} hold. Let $\mu^{(0)} \in \mathcal{C}([0,T], L^2(I, \mathbb{R}^d))$ be the  initial population state mean and let $(K^{(0)}, G^{(0)}) \in L^\infty([0,T], \mathbb{R}^{k \times d}) \times L^2([0,T], L^2(I, \mathbb{R}^k))$ be the initial policy parameters.

    Let $\varepsilon \in (0,1)$. Choose the number of outer iterations  $N \in \mathbb{N}$ in Algorithm \ref{algo} as 
    \begin{equation*}
        N \ge -\frac{1}{\log{(M_2)}} \log{\left( \frac{2 \lVert \mu^{(0)} - \mu^* \lVert_{\mathcal{C}}}{\varepsilon} \right)} 
    \end{equation*}
    with  $M_2 \in (0,1)$  given in Assumption \ref{assumption:convergence},
    the stepsizes $\eta_K$ and $\eta_G$ for the gradient descent updates in Algorithm \ref{algo} as   in Theorems \ref{theorem:K_CONVERGENCE} and \ref{theorem:G_CONVERGENCE}, respectively,
    and 
$(\omega_n)_{n=0}^{ N-1}
\subset (0,\infty)$ such that 
    \begin{equation}
    \label{eq:sum_condition_omega}
        \sum_{n = 0}^{N-1} M_2^{N-1-n} \omega_n \le \frac{\varepsilon}{2}.
    \end{equation}
    Consider 
the policy iterates 
$(K^{(n)}, G^{(n)})_{n=0}^{N}$ generated by 
Algorithm \ref{algo}
as follows: 
for each $n=0, \ldots, N-1 $, given 
 $K^{(n)}$, $ G^{(n)}$ and 
 $\mu^{(n)}$,
set 
$L_n^K,  L_n^G\in \mathbb N $, 
depending polynomially on $\log{(1/\omega_n)}$, 
to 
obtain 
$(K^{(n+1)}, G^{(n+1)})$
(cf.~Steps 5-12 of Algorithm \ref{algo}),
and  obtain 
$\mu^{(n+1)} $
from 
the mean field oracle $\Hat{\Phi}$ 
(cf.~Step 13 of
Algorithm \ref{algo}) such that
    \begin{equation}
    \label{eq:mu_estimator_assumption}
        \lVert \mu^{(n+1)} - \mu^{(n+1),*} \rVert_{\mathcal{C}} \le \frac{\omega_n}{3},
    \end{equation}
    where $\mu^{(n+1),*}$ is the  
    population state mean
    associated with the policy $(K^{(n+1)}, G^{(n+1)})$ (cf.~\eqref{eq:mean_policy}).
    
    Then it holds that 
\begin{equation*}
        \lVert K^{(N)} - K^* \rVert_{L^2([0,T])} \le \varepsilon, \quad \lVert G^{(N)} - G^{*} \rVert_{L_B^2} \le \varepsilon M_G,
        \quad 
         \lVert \mu^{(N)} - \mu^* \rVert_{\mathcal{C}} \le \varepsilon,
    \end{equation*}
 with a constant $M_G \geq 1$, defined in \eqref{eq:M_G}, which is independent of $\varepsilon$ and decreases as $\lVert W \rVert_{L^2(I^2)}$ decreases.
 
\end{theorem}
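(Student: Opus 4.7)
The plan is to establish a one-step error recursion for the population mean, of the form $\|\mu^{(n+1)}-\mu^*\|_{\mathcal{C}}\le M_2\,\|\mu^{(n)}-\mu^*\|_{\mathcal{C}}+\omega_n$, then unroll it using \eqref{eq:sum_condition_omega}, and finally convert the resulting bound into the stated error bounds on the policy parameters $K^{(N)}$ and $G^{(N)}$. The factor $M_2<1$ from Assumption \ref{assumption:convergence} governs how the idealized best-response fixed-point iteration shrinks the error, while $\omega_n$ absorbs the three sources of algorithmic error at outer iteration $n$: the policy-gradient (PG) errors for the slope and the intercept, together with the oracle error $\omega_n/3$ from \eqref{eq:mu_estimator_assumption}.

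\textbf{Single-step decomposition.} Fix $n$ and introduce the idealized update $\tilde{\mu}^{(n+1)}$, the population mean generated by applying the \emph{exact} best-response policy $(K^*,G^{*,Z^{(n)}})$ to the current graphon aggregate $Z^{(n)}$, where $G^{*,Z^{(n)},\alpha}:=\arg\min_G J_2^\alpha(K^*,G,Z^{(n),\alpha})$. The triangle inequality gives
\begin{equation*}
\|\mu^{(n+1)}-\mu^*\|_{\mathcal{C}}\le \|\mu^{(n+1)}-\mu^{(n+1),*}\|_{\mathcal{C}}+\|\mu^{(n+1),*}-\tilde{\mu}^{(n+1)}\|_{\mathcal{C}}+\|\tilde{\mu}^{(n+1)}-\mu^*\|_{\mathcal{C}}.
\end{equation*}
The first term is at most $\omega_n/3$ by \eqref{eq:mu_estimator_assumption}. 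The third term is the distance between two images of the idealized best-response map applied to $\mu^{(n)}$ and $\mu^*$, and is bounded by $M_2\,\|\mu^{(n)}-\mu^*\|_{\mathcal{C}}$ via exactly the contraction argument that underlies Assumption \ref{assumption:convergence} (cf.~\cite[Lemma 4]{gao2021lqggraphonmeanfield}). For the middle term, both $\mu^{(n+1),*}$ and $\tilde{\mu}^{(n+1)}$ solve the linear ODE \eqref{eq:mean_policy} driven by the same $Z^{(n)}$ but with different policies, so Gronwall's inequality together with $M_1<1$ gives
\begin{equation*}
\|\mu^{(n+1),*}-\tilde{\mu}^{(n+1)}\|_{\mathcal{C}}\le C\bigl(\|K^{(n+1)}-K^*\|_{L^2([0,T])}+\|G^{(n+1)}-G^{*,Z^{(n)}}\|_{L_B^2}\bigr)
\end{equation*}
for a model-dependent constant $C$, where the intercept error is further split as $\|G^{(n+1)}-G^{*,Z^{(n)}}\|\le \|G^{(n+1)}-\hat{G}^{(n)}\|+\|\hat{G}^{(n)}-G^{*,Z^{(n)}}\|$ with $\hat{G}^{(n)}$ the exact minimizer of $J_2(K^{(n+1)},\cdot,Z^{(n)})$; the second piece is controlled by $\|K^{(n+1)}-K^*\|$ via stability of minimizers of the uniformly strongly convex objective (Proposition \ref{proposition:J2_strongconvexity}). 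Choosing $L_n^K$ and $L_n^G$ polynomial in $\log(1/\omega_n)$ and invoking Theorems \ref{theorem:K_CONVERGENCE} and \ref{theorem:G_CONVERGENCE}, whose constants are uniform in $n$ by Proposition \ref{prop:BOUND_K}, shrinks each PG contribution to $\omega_n/(6C)$, completing the recursion.

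\textbf{Unrolling and policy bounds.} Iterating the recursion yields
\begin{equation*}
\|\mu^{(N)}-\mu^*\|_{\mathcal{C}}\le M_2^{N}\,\|\mu^{(0)}-\mu^*\|_{\mathcal{C}}+\sum_{n=0}^{N-1}M_2^{N-1-n}\omega_n,
\end{equation*}
where each of the two contributions is at most $\varepsilon/2$ by the choice of $N$ and by \eqref{eq:sum_condition_omega}. The bound on $\|K^{(N)}-K^*\|_{L^2([0,T])}$ is immediate from Theorem \ref{theorem:K_CONVERGENCE} applied at outer iteration $N-1$ with $L_{N-1}^K$ chosen large enough, using that $K^*$ is independent of the mean field. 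For the intercept, split $\|G^{(N)}-G^*\|_{L_B^2}\le \|G^{(N)}-G^{*,Z^{(N-1)}}\|_{L_B^2}+\|G^{*,Z^{(N-1)}}-G^*\|_{L_B^2}$; the first term is controlled as above, while the second is bounded by the Lipschitz dependence of the best-response intercept on the graphon aggregate through the adjoint ODE \eqref{eq:ode_zeta_gradG}, yielding a constant proportional to $\|W\|_{L^2(I^2)}\cdot\|\mu^{(N-1)}-\mu^*\|_{\mathcal{C}}$ via the graphon operator \eqref{eq:graphon_operator}. This is the origin of the constant $M_G\ge 1$ and of its monotone dependence on $\|W\|_{L^2(I^2)}$.

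\textbf{Main obstacle.} The most delicate step is ensuring that the algorithmic contraction factor in the one-step recursion is exactly $M_2$ rather than some larger quantity involving $C_0^K$. This requires a clean separation between the exact best-response step, which supplies the $M_2$ contraction, and the PG approximation step, which is absorbed into $\omega_n$; it also requires carefully tracking the dependence of $G^{*,Z^{(n)}}$ on $Z^{(n)}$ through \eqref{eq:ode_zeta_gradG} without introducing spurious prefactors. A secondary but essential technical point is the uniformity in $n$ of the landscape constants in Theorems \ref{theorem:K_CONVERGENCE}--\ref{theorem:G_CONVERGENCE}, which is where Proposition \ref{prop:BOUND_K} is indispensable: without a uniform bound on $(K^{(n)})_n$, the inner-loop contraction rates could degrade across outer iterations and one could not absorb the PG errors into $\omega_n$ with only a polylogarithmic number of inner updates.
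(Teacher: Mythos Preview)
Your proposal is correct and follows essentially the same route as the paper: establish the one-step recursion $\|\mu^{(n+1)}-\mu^*\|_{\mathcal C}\le M_2\|\mu^{(n)}-\mu^*\|_{\mathcal C}+\omega_n$ by separating the oracle error, the PG-approximation error, and the idealized best-response contraction, then unroll and convert to the $K$- and $G$-bounds. The paper's decomposition is slightly finer (four intermediate means $\mu^{(n+1),*}\to\hat\mu_E^{(n+1),*}\to\bar\mu_E^{(n+1),*}\to\mu^*$ rather than your three), but this is only a matter of bookkeeping.

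Two small imprecisions to watch when you write it out. First, your $\tilde\mu^{(n+1)}$ must be the \emph{self-consistent} mean solving \eqref{eq:mean_policy} with policy $(K^*,G^{*,Z^{(n)}})$, not the mean with $Z^{(n)}$ frozen in the dynamics; the phrase ``driven by the same $Z^{(n)}$'' is ambiguous, and only the self-consistent version yields the $M_2$-contraction cleanly (this is the paper's $\bar\mu_E^{(n+1),*}$, handled via Lemmas \ref{lemma:BOUND_GLOCALTOPOPTIMAL} and \ref{lemma:BOUND_MULOCALTOOPTIMAL}). Second, the paper controls $\|\hat G^{(n)}-G^{*,Z^{(n)}}\|$ not by an abstract stability-of-minimizers argument from strong convexity, but via the explicit formula \eqref{eq:OPTIMALG_P2} from Proposition \ref{proposition:optimG_fixKZ}, which directly gives the bound $\|\hat\mu^{(n+1),*}\|_{\mathcal C}\|K^{(n+1)}-K^*\|_{L^2}$; and the Lipschitz dependence of $G^{*,Z}$ on $Z$ comes through the linear ODE \eqref{eq:localoptimal_PSI_MU_J2} for $\Psi$, not the adjoint \eqref{eq:ode_zeta_gradG}.
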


By Theorem \ref{theorem:CONVERGENCE_FINAL}, the number of gradient descent iterations required in Algorithm \ref{algo} to approximate the NE policy up to an error of 
$\varepsilon$  is of order
$\sum_{n = 0}^{\lceil M \log{(1/\varepsilon)} \rceil} \log{ (1/\omega_n ) } $,  
with
$(\omega_n)_{n=0}^{ N-1}
$
satisfying 
\eqref{eq:sum_condition_omega},
and  $M > 0$ 
being a   constant independent of $\varepsilon$. 
This aligns with the   complexity of the   algorithm proposed in \cite{wang2021global} for infinite-horizon MFGs.
Setting a constant sequence    $(\omega_n)_{n=0}^{ N-1}
$ 
yields an overall iteration complexity of $(\log(1/\varepsilon))^2$ for Algorithm \ref{algo}.

The lower bounds for the number of gradient iterations  $L_n^K$ and $L_n^G$ are provided in Section \ref{subsection:convergence_analsis_to_NE}. These bounds depend polynomially on $\log(1/\omega_n)$,  as well as on the final cost associated with the previous policy iterate, and the constants  in Theorems \ref{theorem:K_CONVERGENCE} and \ref{theorem:G_CONVERGENCE}.  
Condition \eqref{eq:mu_estimator_assumption} on the mean field oracle can be satisfied either by solving the population mean ODE system when the model parameters are known, or by estimating the population mean using sufficiently many state trajectories in a model-free setting.

The proof of Theorem 
\ref{theorem:CONVERGENCE_FINAL}
relies on a careful analysis of error propagation in both the population state mean and the policy parameters through their sequential updates. The analysis requires controlling four sources of error:
(i) the optimization error in the slope parameter;
(ii) the optimization error in the intercept parameter, given a fixed slope and graphon aggregate;
(iii) the mean field estimation error under fixed policy parameters; and
(iv) the convergence of the population state mean to the NE.
Each of these error terms is estimated by introducing suitable integral operators and analyzing their stability with respect to their inputs.

}

\section{Numerical experiments}
\label{section:numerics}

{
This section examines the convergence and robustness of Algorithm \ref{algo} in  LQ-GMFGs  under various graphon interaction structures.  
Algorithm~\ref{algo} is implemented by approximating continuous-time policies with piecewise constant ones,
and    estimating policy gradients using either pathwise differentiation or zeroth-order optimization techniques.   
Our numerical results   demonstrate that:  
\begin{itemize}
    \item Algorithm \ref{algo} achieves    linear convergence rates for both policy slope and intercept  parameters. 
    The convergence rate of the slope parameter is independent of the underlying graphon, whereas the convergence of the intercept parameter depends on the graphon's $L^2$-norm, 
in accordance with  
    Theorem \ref{theorem:CONVERGENCE_FINAL}. 
    \item 
   The discrete-time analogue of Algorithm \ref{algo}, with gradients scaled according to the policy discretization mesh size, achieves robustness across different discretization mesh sizes and noise magnitudes   in the state process.
    \item The model-free extension of Algorithm \ref{algo}, where policy gradients are estimated using suitable zeroth-order optimization techniques based on sampled cost functions, achieves similar linear convergence rates.

\end{itemize}
}

\subsection{Experiment setup}

In the numeral  experiments, 
we fix  
the  following model parameters
in \eqref{eq:state_dynamics}-\eqref{eq:quadratic_cost_initial}: 
$T=1$, 
$d= k=1$, 
$A \equiv -0.25$,  $B \equiv 0.5$, 
$\Bar{A} \equiv  0.25$,
$Q\equiv 0.25$,
$R \equiv 0.5$,
$ H \equiv 1$,
$\Bar{Q} = 0.05$, 
$\Bar{H} = 1$,
and 
$  \xi^\alpha \sim \mathcal{N}(0.5, 0.01)$ for all $\alpha\in I$.
The noise coefficient $D$ is set to $0.25$ for most experiments, except in Figure \ref{fig:num_experim_MSE_noise}, where 
 it is varied to examine its impact on the performance of Algorithm \ref{algo}.
We consider the following graphons:  the complete bipartite graphon $W_{\mathrm{bp}}$, the threshold graphon $W_{\mathrm{th}}$, the half graphon $W_{\mathrm{hf}}$ and the uniform-attachment graphon $W_{\mathrm{ua}}$.
These graphons are defined below and visualized in Figure    \ref{fig:graphons}.
 
\begin{alignat*}{2}
        &W_{\mathrm{bp}}(\alpha, \beta) \coloneqq 
        \begin{cases} 
            1, & \text{if } \min{(\alpha, \beta)} < 0.5 < \max{(\alpha, \beta),} \\ 
            0, & \text{otherwise,}
        \end{cases}  \quad && W_{\mathrm{th}}(\alpha, \beta) \coloneqq
        \begin{cases} 
            1, & \text{if } \alpha + \beta \le 1, \alpha \neq \beta, \\ 
            0, & \text{otherwise,}
        \end{cases}\\
        &W_{\mathrm{hf}}(\alpha, \beta) \coloneqq 
        \begin{cases} 
            1, & \text{if } \lvert \alpha - \beta \rvert \ge 0.5, \\ 
            0, & \text{otherwise,}
        \end{cases}   
       && W_{\mathrm{ua}}(\alpha, \beta) \coloneqq 
        \begin{cases} 
            1 - \max{(\alpha, \beta)}, & \text{if } \alpha \neq \beta, \\ 
            0, & \text{otherwise. }
        \end{cases} 
\end{alignat*}

\begin{figure}[H]
    \centering
    \begin{subcaptionbox}{Complete bipartite graphon\label{fig:graphon_bp}}[.23\linewidth]
    {   \includegraphics[scale=0.23]
        {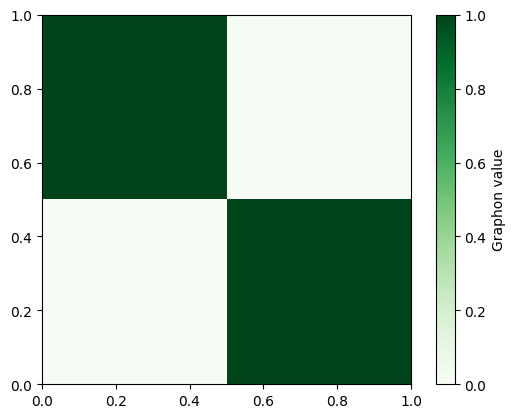}}
    \end{subcaptionbox}
    \begin{subcaptionbox}{Threshold graphon
        \label{fig:graphon_threshold}}[.23\linewidth]
        {\includegraphics[scale=0.23]
        {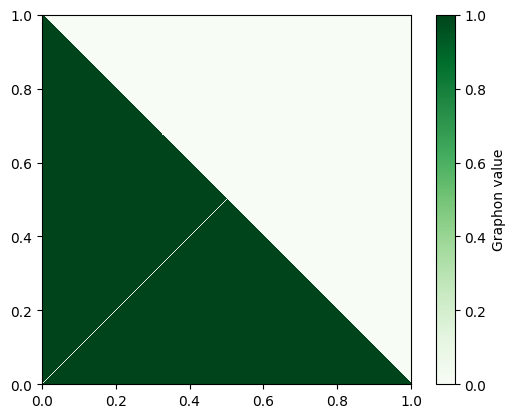}}
     \end{subcaptionbox}
    \begin{subcaptionbox}{Half graphon}[.23\linewidth]
        {\includegraphics[scale=0.23]
        {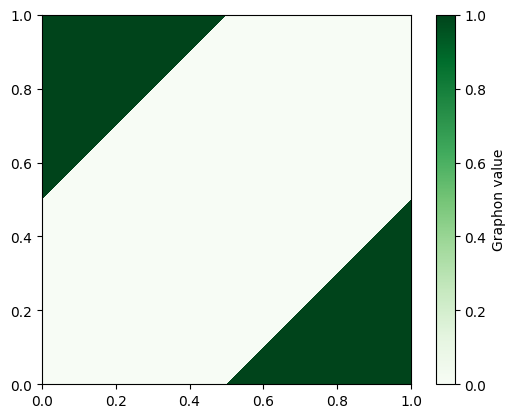}} 
    \end{subcaptionbox}
    \begin{subcaptionbox}{Uniform-attachment graphon\label{fig:ua_graphon}}[.23\linewidth]
     {\includegraphics[scale=0.23]
     {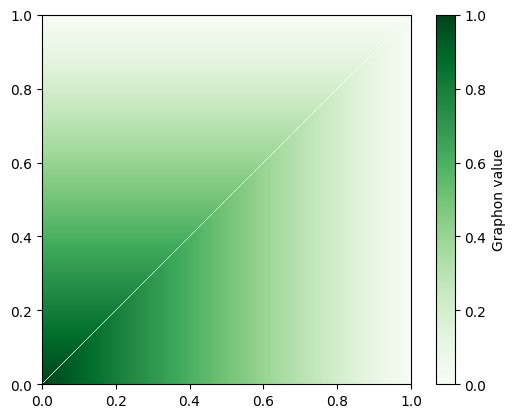}}
    \end{subcaptionbox}

    \caption{Graphons for numerical experiments}
    \label{fig:graphons}
\end{figure}

 Algorithm \ref{algo}
 is implemented by approximating 
 the continuous policy space \eqref{eq:policy_parameterization}
with the space of piecewise constant policies. 
 Specifically, 
we consider a uniform time mesh  
$(\tau_i)_{i=0}^{N_{\mathrm{policy}} + 1}\subset [0,T]$ 
with $\tau_i=i \Delta \tau$,
for some 
$\Delta \tau>0$ and $N_{\mathrm{policy}}\in \mathbb N$ such that $
(N_{\mathrm{policy}} + 1)\Delta \tau =T$,
and consider piecewise constant policy slope and intercept parameters on this grid (see e.g.,   \cite{giegrich_convergence_2022}).   
We further discretize the player space $I$ 
into  $\{0, 0.1, 0.2, \dots, 0.9, 1\}$,
with $N_{\mathrm{player}}=11$ players. This yields the following discretization of the policy space \eqref{eq:policy_parameterization}:
for all $t\in [0,T]$
and $j \in \{1,2 \dots, N_{\mathrm{player}} \}$,
\begin{equation*}
    K_t = \sum_{i = 0}^{N_{\mathrm{policy}}} \mathds{1}_{[\tau_{i}, \tau_{i+1})}(t) K_{\tau_i}, \quad \text{and} \quad G_t^{\alpha_j} = \sum_{i = 0}^{N_{\mathrm{policy}}} \mathds{1}_{[\tau_{i}, \tau_{i+1})}(t) G_{\tau_i}^{\alpha_j},
\end{equation*}
where
$\{K_{\tau_i},  G_{\tau_i}^{\alpha_j}\mid 
  0\le i\le   N_{\mathrm{policy}},
 1\le j \le N_{\mathrm{player}}
 \}
$ are the  policy   parameters    to be updated by the algorithm. 

We initialize Algorithm \ref{algo}
with the initial policy parameters $K^{(0)} \equiv -1$ and $G^{(0), \alpha_j} \equiv 1$  for all $1\le j \le N_{\mathrm{player}}$, 
the initial mean field $\mu^{(0)}\equiv 0$,
   the number of outer iterations $N=15$,  and 
 the number of gradient updates    
$L^K_n=L^G_n =10$ for all $n=0, \ldots, N $.
At each gradient update, the performance of the current policy iterate is evaluated by approximating  $J_1$ and $J_2$ defined in \eqref{eq:J1_cost} and \eqref{eq:J2_cost}
with a finer time mesh. More precisely, 
given the current policy parameters $(K,G)$ and graphon aggregate $Z$,
we consider the following  Euler-Maruyama approximation of  
   the state process \eqref{eq:state_dynamics}:  
for each $j=1,\ldots  N_{\mathrm{player}}$
and $i=0,\ldots, N_{\textrm{time}}-1$, set
\begin{equation}
\label{eq:state_euler}
    X_{t_{i+1}}^{\alpha_j} = X_{t_i}^{\alpha_j} + \left( (A + B K_{t_i}) X_{t_i}^{\alpha_j} + B G_{t_i}^{\alpha_j} + \Bar{A} Z_{t_i}^{\alpha_j} \right) \Delta t + D \Delta B_{t_i}^{\alpha_j},
    \quad 
    X_{t_0}^{\alpha_j} = \xi^\alpha,
\end{equation}
where 
$\Delta t=1/N_{\textrm{time}}$
and $(\Delta B_{t_i}^{\alpha_j})_{i = 0,1,\dots,N_{\textrm{time}}-1}^{j = 1,2,\dots,N_{\mathrm{player}}}$ are independent zero-mean Gaussian random variables with variance $\Delta t$.
Using  $N_{\mathrm{sample}}$ sampled 
trajectories of 
\eqref{eq:state_euler}, 
we estimate 
the player-dependent state mean $\Hat{\mu}^{\alpha_j}$ for all $j \in \{1,  \ldots, N_{\mathrm{player}} \}$ and the player-independent state variance $\Hat{\vartheta}$ through empirical approximations. 
The cost functionals $J_1$ and $J_2$ are then approximated by
\begin{equation}
\label{eq:simulated_cost}
    \begin{split}
        &\Hat{J}_1(K) \coloneqq  \Delta t \sum_{i = 0}^{N_{\textrm{time}}-1} \left( Q + R K_{t_i}^2 \right) \Hat{\vartheta}_{t_{i}} + \Bar{Q} \Hat{\vartheta}_{t_{N_{\textrm{time}}}}, \\
        &\Hat{J}_2^{\alpha_j}(G^{\alpha_j})\coloneqq \Hat{J}_2^{\alpha_j}(K, G^{\alpha_j}, Z^{\alpha_j}) = \Delta t \sum_{i = 0}^{N_{\textrm{time}}-1} \left( Q \left(\Hat{\mu}_{t_i}^{\alpha_j} - H Z_{t_i}^{\alpha_j} \right)^2 + R \left( K_{t_i} \Hat{\mu}_{t_i}^{\alpha_j} + G_{t_i}^{\alpha_j} \right)^2 \right) \\
        &\quad\quad\quad\quad\quad\quad\quad\quad\quad\quad\quad\quad\quad\quad+ \Bar{Q} \left( \Hat{\mu}_{t_{N_{\textrm{time}}}}^{\alpha_j} - \Bar{H} Z_{t_{N_{\textrm{time}}}}^{\alpha_j} \right)^2.
    \end{split}
\end{equation}
In the sequel, we set $N_{\mathrm{sample}} = 10^5$ and $N_{\mathrm{time}} = 120$ such that the time step $\Delta t=1/120$ is less than or equal to the finest policy discretization mesh size.
The approximate cost functions \eqref{eq:simulated_cost} will then be used for estimating   policy gradients, 
either  
using pathwise differentiation (Section \ref{sec:numerical_automatic}) or using zeroth-order optimization techniques 
(Section \ref{sec:numerical_model_free}).

Finally, to implement the mean field oracle $\hat \Phi$ in  Step  13 of
Algorithm \ref{algo}, 
we first solve the ODE \eqref{eq:mean_policy}
with  the latest policy parameters for 
the state mean $(\mu_{t_i}^{\alpha_j})_{i=0,\ldots, N_{\rm time}}^{j=1,\ldots, N_{\rm player}}$, and then define the updated graphon aggregate  by  
\begin{equation*}
    Z_{t_i}^{\alpha_j} = \frac{1}{N_{\mathrm{player}}} \sum_{k \neq j} W(\alpha_j, \alpha_k) \mu_{ t_i}^{\alpha_k},
    \quad \forall i=0,\ldots, N_{\rm time}, j=1,\ldots, N_{\rm player},
\end{equation*}
where $W$ is the pre-specified graphon taken from $\{ W_{\mathrm{bp}},   W_{\mathrm{th}},  W_{\mathrm{hf}},  W_{\mathrm{ua}}\}$.

\subsection{Gradient estimation using pathwise   differentiation} 
\label{sec:numerical_automatic}

In this section,
we compute the policy gradients by applying pathwise differentiation to the associated cost functional with respect to the policy parameters.
Specifically, for each 
$\ell \in \mathbb{N}_0$,
given 
the policy parameters
$(K^{(\ell)}, G^{(\ell)})$,
we compute the associated costs $\Hat{J}_1 $ and $\Hat{J}^{\alpha_j}_2  $
as in   \eqref{eq:simulated_cost},
and update the policy parameters by: 
for all $i=0,\ldots, N_{\mathrm{policy}}$ and 
$j=1,\ldots, N_{\mathrm{player}}$, 
\begin{equation}
    \label{eq:updates_numexp}
    K_{\tau_i}^{(\ell + 1)} = K_{\tau_i}^{(\ell)} - \frac{\eta_K}{\Delta \tau \, \Hat{\vartheta}_{\tau_{i}}} \widehat{\nabla_{K_{\tau_i}}} J_1(K^{(\ell)}), \quad   G_{\tau_i}^{\alpha_j, (\ell + 1)} = G_{\tau_i}^{\alpha_j, (\ell)} - \frac{\eta_G}{\Delta \tau} \widehat{\nabla_{G_{\tau_i}}} J_2^{\alpha_j}(G^{\alpha_j, (\ell)}),
\end{equation}
where 
$\Delta \tau>0$ is the policy discretization mesh size, 
$\eta_K = \eta_G = 0.1$, and 
$\widehat{\nabla_{K_{\tau_i}}} J_1(K^{(\ell)})$
and $\widehat{\nabla_{G_{\tau_i}}} J_2^{\alpha_j}(G^{\alpha_j, (\ell)})$ are 
computed using the automatic differentiation in PyTorch. 

 \begin{remark}[\textbf{Scaling gradients with
 discretization
 timescales}]
 \label{rmk:scaling}
    Note that, as in the single-agent setting studied in \cite{giegrich_convergence_2022}, the discrete-time gradient is scaled by the policy discretization mesh size $\Delta \tau$ in the update \eqref{eq:updates_numexp}.
This scaling ensures that, as the policy discretization mesh size tends to zero, the update \eqref{eq:updates_numexp} with piecewise constant policies remains consistent with the continuous-time policy update analyzed in Theorem \ref{theorem:CONVERGENCE_FINAL}. As a result, the update \eqref{eq:updates_numexp}  achieves a \emph{mesh-independent} convergence rate,  
comparable to that of 
Algorithm \ref{algo} with 
 continuous-time policies, as demonstrated in Figure \ref{fig:num_experim_MSE_mesh}; see \cite[Section 2.4]{giegrich_convergence_2022} for a rigorous justification of this mesh-independent convergence in the single-agent setting. 
 \end{remark}

\paragraph{Convergence of the policy parameters with different graphons.}

To examine the convergence 
of Algorithm \ref{algo}, 
we evaluate  the error between the approximate  policies  and the NE policy \eqref{eq:systemcontrol},
whose coefficients $(K^*,G^*)$  
  are  obtained 
 by numerically solving the forward-backward system \eqref{eq:semiexplicit_ODEs} using an explicit Runge-Kutta method
  on the finest time grid with $\Delta t=1/120$,
  and a  fine  discretization of the player space with $N_{\rm player} =161$. 
 In particular, 
 for  any  policy parameters
 $(K_{t_i})_{i=0}^{120}$
 and  $(G^{\alpha_j}_{t_i})_{  i=0,\ldots, 120, j=1,\ldots, 11}$
 generated by Algorithm \ref{algo}, we define their     
root mean squared errors (RMSEs)   by 
  \begin{equation*}
  \textrm{RMSE}(K)=  \left(\frac{1}{121} \sum_{i = 0}^{120} ({K}_{t_i} - K_{t_i}^*)^2\right)^{1/2}, \quad  
  \textrm{RMSE}(G)=
  \left(\frac{1}{121 \cdot 161} \sum_{j = 1}^{161} \sum_{i = 0}^{120} (\Hat{G}_{t_i}^{  \beta_j} - G_{t_i}^{*, \beta_j})^2\right)^{1/2},\end{equation*}
where $\hat G$ is a piecewise constant extension of $G$ over the player space $I=[0,1]$. 

Figure \ref{fig:num_experim_MSE} presents the RMSEs of the policy parameters throughout the execution of Algorithm~\ref{algo},
for different graphon interactions (with the  
 policy discretization mesh size fixed at $\Delta \tau  = 1 /30$).
Recall that there are 10 outer iterations, each comprising 15 policy gradient updates, resulting in a total of 150 gradient iterations.
One can clearly observe that both policy parameters converge linearly to the NE policy, even when only 11 players are used to approximate the player space.

Moreover,
Figure \ref{fig:mse_K} shows that the convergence of the slope parameter is nearly identical across all graphons used in the experiment. This confirms the benefit of normalizing the gradient direction using the covariance matrix, which results in a gradient update that is independent of the underlying graphon (see Remark 
\ref{rmk:gradient_normalization}).

For the intercept parameter, Figure \ref{fig:mse_G} shows that the error initially decays at a common linear rate, before reaching a plateau that depends on the underlying graphon.
The different plateaus are likely due to the interplay between the connectivity and denseness of the graphons, as well as discretization errors in the player space.
In particular, if one measures the denseness 
of a graphon using its 
$L^2$-norm, then Figure~\ref{fig:num_experim_MSE} shows that a larger $L^2$-norm corresponds to a higher RMSE in the final intercept parameter
(the $L^2$-norms of the graphons are 
$\lVert W_{\mathrm{th}} \rVert_{L^2(I^2)} = 1/\sqrt{2}$,
 $\lVert W_{\mathrm{bp}} \rVert_{L^2(I^2)} = 1/\sqrt{2}$,
  $\lVert W_{\mathrm{hf}} \rVert_{L^2(I^2)} = 1/2$
  and $\lVert W_{\mathrm{ua}} \rVert_{L^2(I^2)} = 1/\sqrt{6}$). 
This observation is consistent with the theoretical result in Theorem~\ref{theorem:CONVERGENCE_FINAL}, where the constant $M_G$ in  the final error of the intercept parameter decreases as  $\lVert W  \rVert_{L^2(I^2)}$  becomes smaller.

\begin{figure}[H]
    \centering
    
    \begin{subcaptionbox}{RMSE of the slope parameter $K$\label{fig:mse_K}}[.48\linewidth]
     {\includegraphics[scale=0.48]{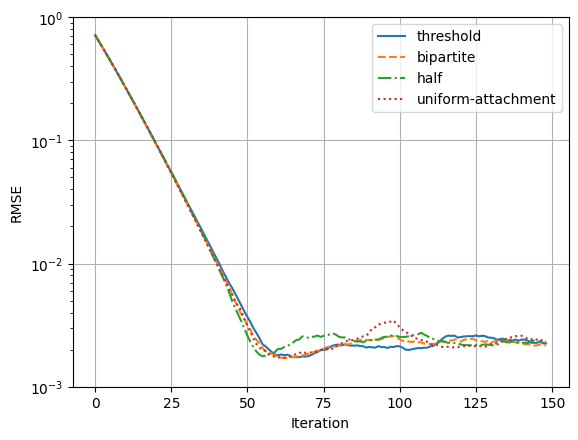}}
    \end{subcaptionbox}
    \begin{subcaptionbox}{RMSE of the intercept parameter $G$\label{fig:mse_G}}[.48\linewidth]
        {\includegraphics[scale=0.48]{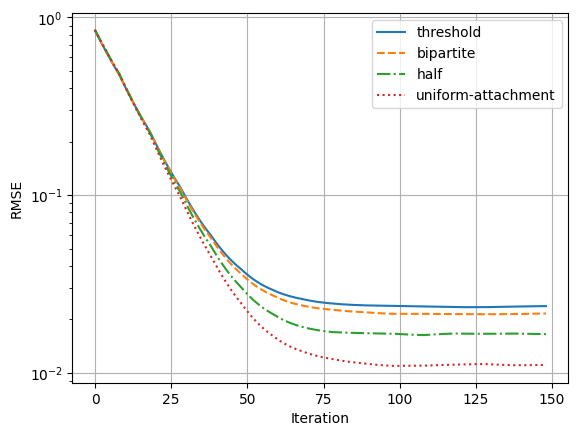}}
    \end{subcaptionbox}

    \caption{Convergence of Algorithm \ref{algo} with different graphons}
    \label{fig:num_experim_MSE}
\end{figure}

\paragraph{Robustness in policy discretization.} 

We then fix the graphon to be the  uniform-attachment graphon,
and examine the    performance of Algorithm \ref{algo} for different   policy discretization mesh sizes 
$\Delta \tau \in \{1/15, 1/30, 1/60, 1/120 \}$ 
(the mesh size for simulating the state process is fixed as $\Delta t = 1/120$).

Figure~\ref{fig:num_experim_MSE_mesh} shows  that Algorithm~\ref{algo} achieves a robust linear convergence   for both policy parameters, regardless of the policy discretization mesh size $\Delta \tau$,  until the optimization error is overtaken by the discretization error.
As highlighted in Remark~\ref{rmk:scaling}, scaling the policy gradient by the discretization mesh size is crucial for ensuring mesh-independent convergence, as standard PG methods typically exhibit performance degradation with increasing action frequency (see~\cite{park2021time}).
The final error of the intercept parameter is  stable across different   discretization mesh sizes, whereas   the slope parameter exhibits a stronger dependence on the discretization level.

\begin{figure}[H]
    \centering
    
    \begin{subcaptionbox}{RMSE of the slope parameter $K$\label{fig:mse_K_mesh}}[.48\linewidth]
     {\includegraphics[scale=0.48]{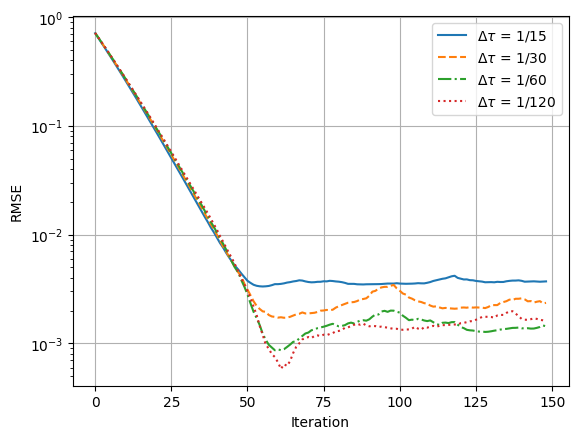}}
    \end{subcaptionbox}
    \begin{subcaptionbox}{RMSE of the intercept parameter $G$\label{fig:mse_G_mesh}}[.48\linewidth]
        {\includegraphics[scale=0.48]{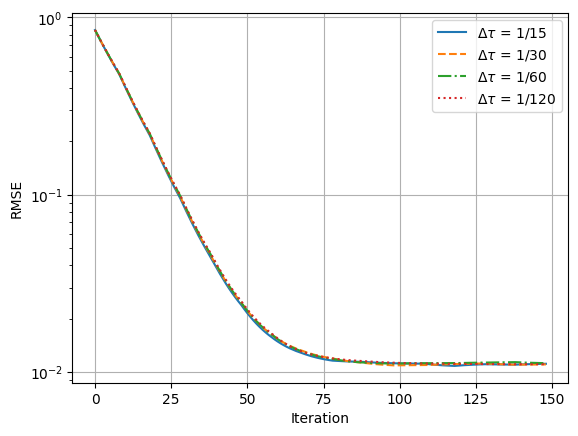}}
    \end{subcaptionbox}

    \caption{Convergence of Algorithm \ref{algo} with different policy discretization mesh sizes}
    \label{fig:num_experim_MSE_mesh}
\end{figure}

\paragraph{Robustness in the  magnitude  of 
system
noise.}

Figure \ref{fig:num_experim_MSE_noise} shows the convergence behavior of Algorithm~\ref{algo} by varying magnitudes of system noise  $D \in \{0.001, 0.01, 0.25, 1, 2\}$, with
the uniform-attachment graphon and 
the policy   mesh size fixed at $\Delta \tau = 1/30$. 
Both policy parameters 
achieve similar 
linear convergence rates for different noise levels.  
This contrasts with the PG algorithm for $N$-player games proposed in \cite{hambly_nplayer}, which only converges when the system noise is sufficiently large.
The seemingly higher  numerical instability at larger iteration numbers in Figures \ref{fig:mse_K_noise} and \ref{fig:mse_G_noise} arises from the small magnitude of the errors, which makes the fluctuations appear more pronounced on the logarithmic scale. This variance could be mitigated by increasing the number of samples.

\begin{figure}[!ht]
    \centering
    
    \begin{subcaptionbox}{RMSE of the slope parameter $K$ \label{fig:mse_K_noise}}[.48\linewidth]
     {\includegraphics[scale=0.48]{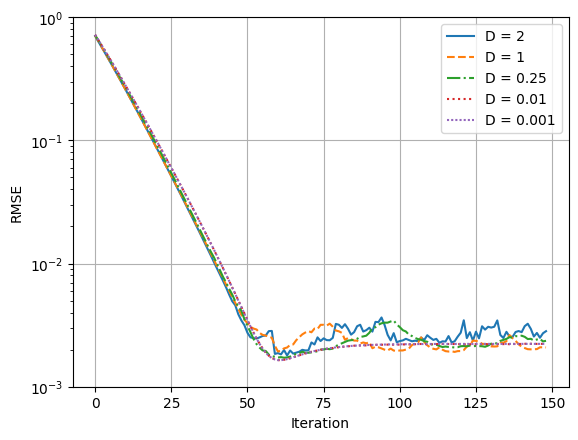}}
    \end{subcaptionbox}
    \begin{subcaptionbox}{RMSE of the intercept parameter $G$ \label{fig:mse_G_noise}}[.48\linewidth]
        {\includegraphics[scale=0.48]{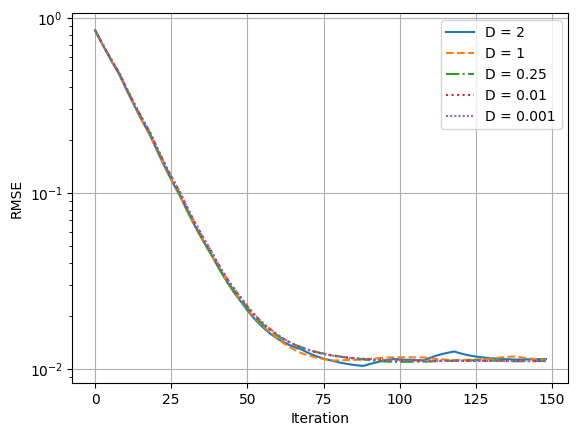}}
    \end{subcaptionbox}

    \caption{Convergence   of Algorithm \ref{algo} with different noise levels}
    \label{fig:num_experim_MSE_noise}
\end{figure}

\subsection{Gradient estimation using 
model-free
zeroth-order optimization}
\label{sec:numerical_model_free}

In this section, 
we implement Algorithm~\ref{algo} using a model-free approach to compute the policy gradients,
without relying on direct access to the model coefficients.
  Specifically, we assume access to an oracle that returns the cost functionals $\hat{J}_1$ and $\hat{J}_2$ for given policy parameters and graphon aggregates (as in \cite{carmona_linear-quadratic_2019}), and estimate the gradients using a standard  zeroth-order optimization method 
by   perturbing the policies with independent uniform noises
as in \cite{fazel_global_2018, hambly_nplayer, hambly_policy_2021} 
 (see also \cite{berahas_theoretical_2022}). The detailed implementation of this model-free gradient estimation procedure is provided in Appendix \ref{appendix:gradient_est}.
For simplicity the terminal cost  is set to $0$, i.e., $\Bar{Q} = 0$,
the policy   mesh size is set to  $\Delta \tau = 1/30$, and 
the uniform-attachment graphon is used. 

\begin{figure}[!ht]
    \centering
    
    \begin{subcaptionbox}{RMSE of the slope parameter $K$\label{fig:mse_K_unkown}}[.48\linewidth]
     {\includegraphics[scale=0.48]{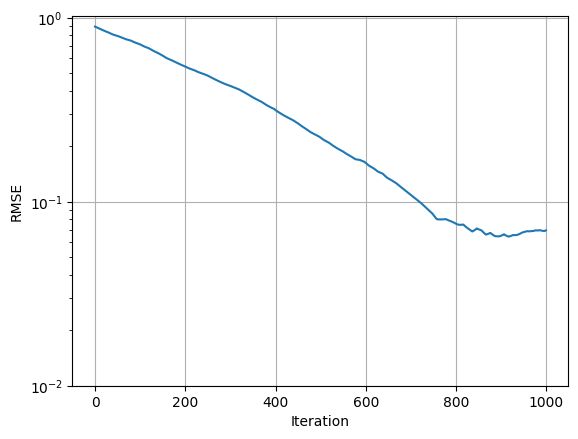}}
    \end{subcaptionbox}
    \begin{subcaptionbox}{RMSE of the intercept parameter $G$\label{fig:mse_G_unkown}}[.48\linewidth]
        {\includegraphics[scale=0.48]{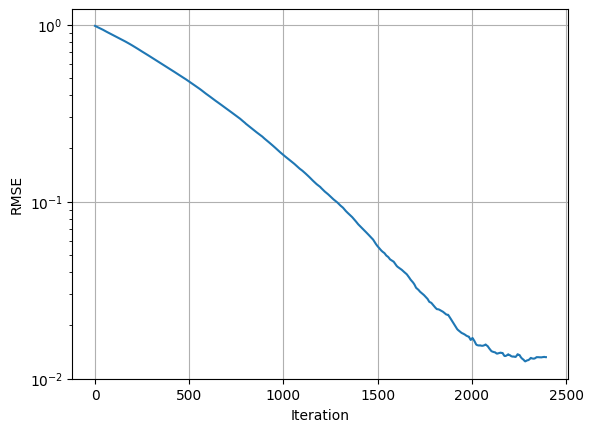}}
    \end{subcaptionbox}

    \caption{Convergence of Algorithm~\ref{algo} with zeroth-order gradient estimations}
    \label{fig:num_experim_MSE_UNKOWN}
\end{figure} 

We take the same piecewise constant  policy parameterization  as in Section \ref{sec:numerical_automatic}, and 
update the policies  according to  \eqref{eq:updates_numexp} with smaller  stepsizes $\eta_K = \eta_G = 0.01$ to enhance algorithm stability.
The number of  outer iterations in Algorithm \ref{algo} is set to   $240$ and 
at each outer iteration,
both policy parameters are updated using 10 inner gradient descent steps.
 We examine the algorithm convergence 
 by computing the RMSEs of the policy parameters   as in Section \ref{sec:numerical_automatic}.

Figure \ref{fig:num_experim_MSE_UNKOWN} shows  the linear convergence  
of   the model-free extension of Algorithm \ref{algo}  
for both policy parameters. Figure \ref{fig:mse_K_unkown} displays the convergence of the slope parameter over the first 1000 iterations, beyond which the error plateaus. In contrast, Figure \ref{fig:mse_G_unkown} shows that the intercept error continues to decrease until around iteration 2250, ultimately reaching an accuracy comparable to that achieved with automatic differentiation for gradient computation in Section \ref{sec:numerical_automatic}.

\section{Convergence analysis}
\label{section:convergence_analysis}

The convergence analysis is structured as follows: We first show the decomposition of the cost functional in Section \ref{subsection:decomp_proof}. Then we provide the convergence analysis for the slope parameter $K$ to the optimal slope $K^*$ in Section \ref{subsection:convergence_analysis_K}. Afterwards, we show in Section \ref{subsection:convergence_analysis_G} the convergence of the intercept parameter $G$ for fixed slope $K$ and graphon aggregate $Z$. Finally, in Section \ref{subsection:convergence_analsis_to_NE} we combine the results and carefully analyze the full convergence to the NE for the policy parameters and the population state mean.

\subsection{Decomposition of the cost functional}
\label{subsection:decomp_proof}

The next proposition characterizes the optimal value  and intercept parameter for fixed slope and graphon aggregate. More precisely, 
given $K\in L^2([0,T], \mathbb R^{k\times d})$
and $Z^\alpha \in \mathcal{C}([0,T], \mathbb R^d)$,
consider the following minimization problem: 
\begin{equation}
\label{eq:J2_controlproblem}
    \left\{
    \begin{aligned}
        &\inf\limits_{G^\alpha \in L^2([0,T], \mathbb{R}^{k})}{ J_2^\alpha(K, G^\alpha, Z^\alpha) } \\
        &\text{s.t.} \quad \frac{\partial \mu_t^\alpha}{\partial t} = (A + B K_t) \mu_t^\alpha + B G_t^\alpha + \Bar{A} Z_t^\alpha, \quad t \in [0,T]; \quad \mu_0^\alpha = \mathbb{E}[\xi^\alpha].
    \end{aligned}
    \right.
\end{equation}
\begin{proposition}
    \label{proposition:optimG_fixKZ}
    Let $\alpha \in I$ and $Z^\alpha \in \mathcal{C}([0,T], \mathbb{R}^d)$. Let $P^* \in \mathcal{C}([0,T], \mathbb{S}^d)$ satisfy \eqref{eq:PI_riccati}. Let $\Psi^{Z, \alpha} \in \mathcal{C}([0,T], \mathbb{R}^d)$ satisfy for all $t \in [0,T]$,
    \begin{equation}
        \label{eq:localoptimal_PSI_MU_J2}
        \begin{split}
            &\frac{\partial \Psi_t^{Z, \alpha}}{\partial t} + \left( A^\top - P_t^* B R^{-1} B^\top \right) \Psi_t^{Z, \alpha} + 2 (P_t^* \Bar{A} - Q H ) Z_t^\alpha = 0, \quad \Psi_T^{Z, \alpha} = 2 \Bar{Q} \Bar{H} Z_T^\alpha.
        \end{split}
    \end{equation}
    Then the optimal intercept parameter $G^{*, K, Z, \alpha}$ for the optimization problem \eqref{eq:J2_controlproblem} with fixed slope parameter $K$ and graphon aggregate $Z^\alpha$ is given by
    \begin{equation}
    \label{eq:OPTIMALG_P2}
        G_{t}^{*, K, Z, \alpha} = (K_t^* - K_t) \mu_t^{*, \alpha} - \frac{1}{2} R^{-1} B^\top \Psi_t^{Z, \alpha},\quad \forall t\in [0,T],
        \end{equation}
    where 
     $K^*$ is  defined in \eqref{eq:systemcontrol} and 
    $\mu^{*, \alpha}\in \mathcal C([0,T],\mathbb R^d)$ satisfies
    \begin{equation}
        \label{eq:optim_state_intercept}
        \begin{split}
            &\frac{\partial \mu_t^{*, \alpha}}{\partial t} = \left( A - B R^{-1} B^\top P_t^* \right) \mu_t^{*, \alpha} - \frac{1}{2} B R^{-1} B^\top \Psi_t^{Z, \alpha} + \Bar{A} Z_t^\alpha, \quad \mu_0^{*, \alpha} = \mathbb{E}[\xi^\alpha].
        \end{split}
    \end{equation}
     Moreover, the optimal cost $\inf_{G^\alpha \in L^2([0,T], \mathbb{R}^{k})}{J_2^\alpha(K,G^\alpha, Z^\alpha)}$ is independent of $K$. 
\end{proposition}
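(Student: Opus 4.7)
My plan is to recognize that \eqref{eq:J2_controlproblem} becomes, after a change of control variable, a standard deterministic LQ tracking problem in which the slope $K$ no longer appears; the characterization of $G^{*,K,Z,\alpha}$ and the $K$-independence of the optimal value will then follow from the classical completion-of-squares identity.

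First I would eliminate $K$ as follows. Fix $K \in L^2([0,T],\mathbb{R}^{k\times d})$ and define $U_t \coloneqq K_t \mu_t^\alpha + G_t^\alpha$. A standard Gronwall argument, using $A, B \in \mathcal{C}([0,T])$, gives well-posedness of the forward ODE in \eqref{eq:J2_controlproblem} for every $G^\alpha \in L^2([0,T],\mathbb{R}^k)$ and shows that $G^\alpha \mapsto U$ is an affine bijection of $L^2([0,T],\mathbb{R}^k)$ once $\mu_0^\alpha$ and $K$ are fixed. Consequently, minimizing $J_2^\alpha(K,\cdot,Z^\alpha)$ over $G^\alpha$ is equivalent to minimizing over $U$ the cost
\begin{equation*}
    \int_0^T \Big[(\mu_t^\alpha - HZ_t^\alpha)^\top Q (\mu_t^\alpha - HZ_t^\alpha) + U_t^\top R U_t\Big]\,dt + (\mu_T^\alpha - \bar{H}Z_T^\alpha)^\top \bar{Q}(\mu_T^\alpha - \bar{H}Z_T^\alpha)
\end{equation*}
subject to $\partial_t \mu_t^\alpha = A\mu_t^\alpha + BU_t + \bar{A}Z_t^\alpha$ with $\mu_0^\alpha = \mathbb{E}[\xi^\alpha]$. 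Neither the dynamics nor the cost in the $(U,\mu)$ variables involves $K$, so the optimal control $U^*$ and the optimal value depend only on $Z^\alpha$ and $\mathbb{E}[\xi^\alpha]$, which already establishes the $K$-independence claim.

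Next I would solve this tracking LQ problem by completion of squares. Making the value-function ansatz $V_t(\mu) = \mu^\top P_t \mu + \Psi_t^\top \mu + \rho_t$ and differentiating $V_t(\mu_t^\alpha)$ along the controlled flow, the matching of quadratic, linear, and constant terms in $\mu$ will produce respectively the Riccati equation \eqref{eq:PI_riccati} for $P = P^*$ (uniquely solvable in $\mathcal{C}([0,T],\mathbb{S}^d)$ by \cite[Ch.~6, Thm.~7.2]{yong_stochastic_1999}), the linear terminal-value ODE \eqref{eq:localoptimal_PSI_MU_J2} for $\Psi = \Psi^{Z,\alpha}$, and a scalar ODE for $\rho^{Z,\alpha}$ driven only by $Z^\alpha$, $P^*$, and $\Psi^{Z,\alpha}$. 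Pointwise minimization of the Hamiltonian in $U$ then yields
\begin{equation*}
    U_t^* = -R^{-1}B^\top\bigl(P_t^*\mu_t^{*,\alpha} + \tfrac{1}{2}\Psi_t^{Z,\alpha}\bigr) = K_t^*\mu_t^{*,\alpha} - \tfrac{1}{2}R^{-1}B^\top \Psi_t^{Z,\alpha}.
\end{equation*}
Substituting $U^*$ into the transformed state ODE recovers \eqref{eq:optim_state_intercept}, and translating back via $G^{*,K,Z,\alpha}_t = U_t^* - K_t\mu_t^{*,\alpha}$ gives \eqref{eq:OPTIMALG_P2}.

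Finally, verification would proceed by a standard integration-by-parts identity: computing $\int_0^T \tfrac{d}{dt}V_t(\mu_t^\alpha)\,dt$ along an arbitrary $(U,\mu)$ pair and invoking the ODEs for $P^*, \Psi^{Z,\alpha}, \rho^{Z,\alpha}$ should produce
\begin{equation*}
    J_2^\alpha(K,G^\alpha,Z^\alpha) - V_0(\mathbb{E}[\xi^\alpha]) = \int_0^T (U_t - U_t^*)^\top R(t)(U_t - U_t^*)\,dt \ge 0,
\end{equation*}
with equality iff $U = U^*$ in $L^2$, confirming both optimality and that $\inf_G J_2^\alpha(K,G^\alpha,Z^\alpha) = V_0(\mathbb{E}[\xi^\alpha])$ is independent of $K$. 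The hard part will be the bookkeeping of cross terms in this identity, in particular ensuring that the linear-in-$\mu$ contributions from the tracking penalties involving $H$ and $\bar{H}$ and the $P^*$-drift cancel exactly against the source term in \eqref{eq:localoptimal_PSI_MU_J2}, and that the terminal condition $\Psi_T^{Z,\alpha} = 2\bar{Q}\bar{H}Z_T^\alpha$ falls out correctly from the terminal penalty; beyond this, the argument is a direct application of classical deterministic LQ control theory.
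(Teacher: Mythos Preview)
Your proposal is correct and follows essentially the same approach as the paper: both reduce the problem to a deterministic LQ control problem and solve it via the quadratic value-function ansatz $V_t(\mu)=\mu^\top P_t^*\mu+\mu^\top\Psi_t^{Z,\alpha}+\text{const}$, obtaining the Riccati equation \eqref{eq:PI_riccati}, the linear ODE \eqref{eq:localoptimal_PSI_MU_J2}, and a $K$-independent scalar ODE. The one noteworthy difference is that you first make the change of control variable $U_t=K_t\mu_t^\alpha+G_t^\alpha$, which renders the $K$-independence of the optimal value immediate before any computation, whereas the paper works directly in the $G$-variable (citing \cite[Ch.~6, \S2]{yong_stochastic_1999}) and reads off the $K$-independence only after observing that the value-function coefficients $P^*,\Psi^{Z,\alpha},\varphi^{Z,\alpha}$ do not involve $K$; your route is slightly more transparent on this point, but the underlying analysis is identical.
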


\begin{proof}
    Similarly to \cite[Chapter 6, Section 2]{yong_stochastic_1999},
    the optimal feedback control 
    of   the deterministic LQ control problem \eqref{eq:J2_controlproblem} is given by 
    $$
    G_t^{*, K, Z, \alpha}(\mu) = - (R^{-1}/2) (B^\top \nabla_\mu v^{Z, \alpha}(t, \mu) + 2 R K_t \mu),
    \quad (t,\mu) \in [0,T]\times \mathbb R^d, 
    $$
    where 
     $v^{Z, \alpha}: [0,T] \times \mathbb{R}^d \to \mathbb{R}$ is
       the optimal value function
     given by $v^{Z, \alpha}(t, \mu) = \mu^\top P_t^* \mu + \mu^\top \Psi_t^{Z, \alpha} + \varphi_t^{Z, \alpha}$ for $t \in [0,T]$ and $\mu \in \mathbb{R}^d$, with  $P^* \in \mathcal{C}([0,T], \mathbb{S}^d)$ satisfying \eqref{eq:PI_riccati}, $\Psi^{Z, \alpha} \in \mathcal{C}([0,T], \mathbb{R}^d)$ satisfying \eqref{eq:localoptimal_PSI_MU_J2} and $\varphi^{Z, \alpha} \in \mathcal{C}([0,T], \mathbb{R}^d)$ satisfying for all $t \in [0,T]$,
     \begin{equation*}
         \frac{\partial \varphi_t^{Z, \alpha}}{\partial t} - \frac{1}{4} (\Psi_t^{Z, \alpha})^\top B R^{-1} B^\top \Psi_t^{Z, \alpha} + (\Bar{A} Z_t^\alpha)^\top \Psi_t^{Z, \alpha} + (H Z_t^\alpha)^\top Q (H Z_t^\alpha) = 0, \quad \varphi_T^{Z, \alpha} = \lvert \Bar{H} Z_T^\alpha \lvert^2.
     \end{equation*}
     Notice that $P^*, \Psi^{Z, \alpha}$ and $\varphi^{Z, \alpha}$ are independent of $K$. Therefore, the optimal value function given arbitrary slope $K$ and graphon aggregate $Z^\alpha$ is independent of $K$. By the relationship $v^{Z, \alpha}(0, \mu_0^\alpha) = \inf_{G^\alpha \in L^2([0,T], \mathbb{R}^{k})}{J_2^\alpha(K,G^\alpha, Z^\alpha)}$, we conclude that the optimal cost  for the minimization problem \eqref{eq:J2_controlproblem} $\inf_{G^\alpha \in L^2([0,T], \mathbb{R}^{k})}{J_2^\alpha(K,G^\alpha, Z^\alpha)}$ is also independent of $K$. Then, noting that $\nabla_\mu v^{Z, \alpha}(t,\mu) = 2 P_t^* \mu + \Psi_t^{Z, \alpha}$ and using the optimal slope $K^*$ \eqref{eq:systemcontrol} yields the optimal intercept \eqref{eq:OPTIMALG_P2}. Finally, inserting the optimal intercept $G^{*, K, Z, \alpha}$ in the state mean \eqref{eq:mean} gives \eqref{eq:optim_state_intercept}.
\end{proof}

\begin{proof}[Proof of Theorem \ref{theorem:cost_decomposition}]
    Let $\alpha \in I$.
    To decompose the cost functional $J^\alpha$ into $J_1^\alpha(K) + J_2^\alpha(K, G^\alpha, Z^\alpha)$ for any $Z^\alpha \in\mathcal{C}([0,T], \mathbb{R}^d)$ and $\theta \coloneqq (K,G^\alpha) \in L^2([0,T], \mathbb{R}^{k \times d}) \times L^2([0,T], \mathbb{R}^k)$, define the state second moment matrix $\Sigma_t^{\theta, \alpha} \coloneqq \mathbb{E}[X_t^{\theta, \alpha} (X_t^{\theta,\alpha})^\top]$ where $X^{\theta,\alpha}$ satisfies \eqref{eq:state_player_alpha} for policy parameter $\theta$ and graphon aggregate $Z^\alpha$. In the following, the $\theta$ in the superscript of the state process is dropped to simplify notation. Then the cost functional can be written as
    \begin{align*}
        J^\alpha(K, G^\alpha, Z^\alpha) &= \mathbb{E}\Bigg[\int_0^T \left[ (X_t^\alpha - H Z_t^\alpha)^\top Q (X_t^\alpha - H Z_t^\alpha) + (K_t X_t^\alpha + G_t^\alpha)^\top R (K_t X_t^\alpha + G_t^\alpha) \right] \, dt \\
        &\quad+ (X_T^\alpha - \Bar{H} Z_T^\alpha)^\top \Bar{Q} (X_T^\alpha - \Bar{H} Z_T^\alpha) \Bigg] \\
        &= \int_0^T \left[ \mathrm{tr}(Q \Sigma_t^\alpha) + \mathrm{tr}(K_t^\top R K_t \Sigma_t^\alpha) \right] \, dt + \mathrm{tr}(\Bar{Q} \Sigma_T^\alpha) \\
        &\quad+ \int_0^T \left[ - 2 (H Z_t^\alpha) Q \mu_t^\alpha + (H Z_t^\alpha)^\top Q (H Z_t^\alpha) - 2 (K_t \mu_t^\alpha)^\top R G_t^\alpha + (G_t^\alpha)^\top R G_t^\alpha \right] \, dt \\
        &\quad- 2 (\Bar{H} Z_T^\alpha)^\top \Bar{Q} \mu_T^\alpha + (\Bar{H} Z_T^\alpha)^\top \Bar{Q} (\Bar{H} Z_T^\alpha) \\
        &= \int_0^T \mathrm{tr}\left((Q + K_t^\top R K_t) \vartheta_t^\alpha \right) \, dt + \mathrm{tr}(\Bar{Q} \vartheta_T^\alpha) \\
        &\quad+ \int_0^T \left[ (\mu_t^\alpha - H Z_t^\alpha)^\top Q (\mu_t^\alpha - H Z_t^\alpha) + (K_t \mu_t^\alpha + G_t^\alpha)^\top R (K_t \mu_t^\alpha + G_t^\alpha) \right] \, dt \\
        &\quad+ (\mu_T^\alpha - \Bar{H} Z_T^\alpha)^\top \Bar{Q} (\mu_T^\alpha - \Bar{H} Z_T^\alpha) \\
        &= J_1^\alpha(K) + J_2^\alpha(K,G^\alpha,Z^\alpha),
    \end{align*}
    where we used $\Sigma_t^\alpha = \mu_t^\alpha (\mu_t^\alpha)^\top + \vartheta_t^\alpha$ with $\mu^\alpha$ satisfying \eqref{eq:mean} and $\vartheta^\alpha$ satisfying \eqref{eq:variance} for policy parameter $\theta$ and graphon aggregate $Z^\alpha$.
    
    By Proposition \ref{proposition:optimG_fixKZ}, for arbitrary $K \in L^2([0,T], \mathbb{R}^{k \times d})$ and $Z^\alpha \in \mathcal{C}([0,T], \mathbb{R}^d)$, the optimal cost $\inf_{G^\alpha \in L^2([0,T], \mathbb{R}^{k})} J_2^\alpha(K,G^\alpha,Z^\alpha)$ is independent of $K$.
    Let
    $ K^*\in \argmin_{K\in L^2([0,T], \mathbb{R}^{k \times d})} J_1^\alpha(K)$
    and $  G^{*, Z, \alpha} \in \argmin_{G^\alpha \in L^2([0,T], \mathbb{R}^{k})}J_2^\alpha(K^*,G^\alpha,Z^\alpha)$. 
     Then 
    \begin{align*}
        J(K, G^\alpha, Z^\alpha) &= J_1^\alpha(K) + J_2^\alpha(K,G^\alpha,Z^\alpha) \ge J_1^\alpha(K^*) + J_2^\alpha(K,G^\alpha,Z^\alpha)
        \\
        &\ge 
        J_1^\alpha(K^*) +  \inf_{G^\alpha \in L^2([0,T], \mathbb{R}^{k})}{J_2^\alpha(K,G^\alpha, Z^\alpha)} 
        =
        J_1^\alpha(K^*) +
        \inf_{G^\alpha \in L^2([0,T], \mathbb{R}^{k})}{J_2^\alpha(K^*,G^\alpha, Z^\alpha)}
        \\
        &=  J_1^\alpha(K^*)
        +J_2^\alpha(K^*,G^{*, Z, \alpha}, Z^\alpha) = J(K^*, G^{*, Z, \alpha}, Z^\alpha).
    \end{align*} 
    This finishes the proof.  
\end{proof}

\subsection{Convergence analysis for the slope}
\label{subsection:convergence_analysis_K}

To prove the convergence of the   scheme \eqref{eq:P1_graddescent}, we start by analyzing the cost functional $J_1^\alpha$.

The next lemma establishes a cost difference result which is of similar nature as \cite[Lemma 3.3]{giegrich_convergence_2022}. We approach the problem from a slightly different angle by controlling the state covariance matrix and optimizing a player-dependent cost functional. Nevertheless, the analysis works similarly as in \cite[Lemmas 3.2 and 3.3]{giegrich_convergence_2022} by considering the cost functional for every $\alpha \in I$.
\begin{lemma}
    \label{lemma:P1_COSTDIFFERENCE}
    Let $K, K' \in L^2([0,T], \mathbb{R}^{k \times d})$. Let $P^K$ satisfy \eqref{eq:ODE_P} for parameter $K$ and let $\vartheta'^{,\alpha}$ satisfy \eqref{eq:variance} for parameter $K'$. Then the cost difference for all $\alpha \in I$ is given by
    \begin{equation*}
        \begin{split}
            J_1^\alpha(K') &- J_1^\alpha(K) = \int_0^T \left[ \langle K_t' - K_t, 2 (B^\top P_t^K + R K_t) \vartheta_t'^{,\alpha} \rangle_F + \langle K_t' - K_t, R (K_t' - K_t) \vartheta_t'^{,\alpha} \rangle_F \right] \, dt. 
        \end{split}
    \end{equation*}
\end{lemma}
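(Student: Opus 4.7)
The plan is to run the standard \emph{performance-difference} computation for LQ cost functionals, adapted so that the covariance $\vartheta'^{,\alpha}$ driven by $K'$ is paired with the cost-to-go matrix $P^K$ driven by $K$. Concretely, I would introduce the scalar function $t \mapsto \mathrm{tr}(P_t^K \vartheta_t'^{,\alpha})$, differentiate it using the two ODEs \eqref{eq:ODE_P} and \eqref{eq:variance}, integrate over $[0,T]$, and then use the boundary conditions $P_T^K = \bar Q$ and $\vartheta_0'^{,\alpha} = \vartheta_0^\alpha$ (the latter independent of the slope, since it only depends on $\xi^\alpha$) to eliminate the endpoint contributions.

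Differentiating and substituting the two ODEs gives
\begin{equation*}
\begin{split}
\frac{d}{dt}\mathrm{tr}(P_t^K \vartheta_t'^{,\alpha})
&= -\mathrm{tr}\bigl([(A+BK_t)^\top P_t^K + P_t^K(A+BK_t) + Q + K_t^\top R K_t]\,\vartheta_t'^{,\alpha}\bigr) \\
&\quad + \mathrm{tr}\bigl(P_t^K[(A+BK_t')\vartheta_t'^{,\alpha} + \vartheta_t'^{,\alpha}(A+BK_t')^\top + DD^\top]\bigr).
\end{split}
\end{equation*}
Using cyclicity of the trace together with the symmetry of $P_t^K$ and $\vartheta_t'^{,\alpha}$ (both ODEs preserve symmetry), the pure $A$-drift terms cancel while the $BK$- and $BK'$-drifts combine into $2\,\mathrm{tr}(P_t^K B(K_t'-K_t)\vartheta_t'^{,\alpha})$, so that
\begin{equation*}
\frac{d}{dt}\mathrm{tr}(P_t^K \vartheta_t'^{,\alpha}) = 2\,\mathrm{tr}\bigl(P_t^K B(K_t'-K_t)\vartheta_t'^{,\alpha}\bigr) - \mathrm{tr}\bigl((Q + K_t^\top R K_t)\vartheta_t'^{,\alpha}\bigr) + \mathrm{tr}(P_t^K DD^\top).
\end{equation*}
Integrating from $0$ to $T$ then yields an expression for $\mathrm{tr}(\bar Q\,\vartheta_T'^{,\alpha})$ in terms of $\mathrm{tr}(P_0^K\vartheta_0^\alpha)$ and the three integrands above.

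For the final step I would substitute this expression into the definition of $J_1^\alpha(K')$ and then apply the same identity with $K' = K$, which recovers the standard representation $J_1^\alpha(K) = \mathrm{tr}(P_0^K\vartheta_0^\alpha) + \int_0^T \mathrm{tr}(P_t^K DD^\top)\,dt$. Subtracting eliminates both the initial-covariance and the noise contributions and leaves
\begin{equation*}
J_1^\alpha(K') - J_1^\alpha(K) = \int_0^T \Bigl[ \mathrm{tr}\bigl(((K_t')^\top R K_t' - K_t^\top R K_t)\vartheta_t'^{,\alpha}\bigr) + 2\,\mathrm{tr}\bigl(P_t^K B(K_t'-K_t)\vartheta_t'^{,\alpha}\bigr) \Bigr]\,dt.
\end{equation*}
Expanding $(K_t')^\top R K_t' - K_t^\top R K_t = K_t^\top R(K_t'-K_t) + (K_t'-K_t)^\top R K_t + (K_t'-K_t)^\top R(K_t'-K_t)$, using $\mathrm{tr}(M) = \mathrm{tr}(M^\top)$ together with symmetry of $R$ and $\vartheta_t'^{,\alpha}$ to merge the two linear cross-terms into $2\,\mathrm{tr}((K_t'-K_t)^\top R K_t\,\vartheta_t'^{,\alpha})$, and rewriting each trace as a Frobenius inner product via $\mathrm{tr}(\Delta^\top X) = \langle \Delta, X\rangle_F$ with $\Delta_t = K_t'-K_t$ then produces exactly the identity stated in the lemma.

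The calculation is essentially mechanical, so the only real obstacle I expect is bookkeeping: keeping straight which slope ($K$ versus $K'$) appears in the Riccati-like evolution for $P^K$ as opposed to the Lyapunov evolution for $\vartheta'^{,\alpha}$, and verifying that the asymmetric $BK$- and $BK'$-type cross-terms combine via the symmetry of $P^K$ and $\vartheta'^{,\alpha}$ into exactly the direction $2(B^\top P^K + RK)\vartheta'^{,\alpha}$ that will later be recognised as the preconditioned gradient in \eqref{eq:J1_K}. The argument is closely parallel to \cite[Lemmas 3.2--3.3]{giegrich_convergence_2022}, the only differences being that the covariance $\vartheta$ rather than the second-moment matrix $\Sigma$ is used, and that the derivation goes through uniformly in $\alpha \in I$ automatically, since the coefficients $A, B, Q, R, \bar Q$ are independent of $\alpha$.
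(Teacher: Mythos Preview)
Your proposal is correct and follows essentially the same approach as the paper, which does not spell out a proof but refers to \cite[Lemmas 3.2--3.3]{giegrich_convergence_2022}; your product-rule computation on $t\mapsto\mathrm{tr}(P_t^K\vartheta_t'^{,\alpha})$ followed by the quadratic expansion is precisely the adaptation of that argument to the covariance process, carried out uniformly in $\alpha$.
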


The next lemma gives the Gâteaux derivative of the cost functional $J_1^\alpha$ with respect to $K$.
\begin{lemma}
    \label{lemma:K_cost_derivative}
     Let $\alpha \in I$. Let $K \in L^2([0,T], \mathbb{R}^{k \times d})$. Let $P^K$ satisfy \eqref{eq:ODE_P} and let $\vartheta^{K,\alpha}$ satisfy \eqref{eq:variance} for parameter $K$. Then for all $K' \in L^2([0,T], \mathbb{R}^{k \times d})$, 
     \begin{equation*}
         \frac{\partial}{\partial \epsilon} J_1^\alpha(K + \epsilon K') \Big|_{\epsilon = 0} = \int_0^T \langle K_t', (\nabla_K J_1^\alpha(K))_t \rangle_F \, dt,
     \end{equation*}
     where $(\nabla_K J_1^\alpha(K))_t \coloneqq 2 (B^\top P_t^K + R K_t) \vartheta_t^{K,\alpha}$ for all $t \in [0,T]$.
\end{lemma}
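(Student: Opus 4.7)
The plan is to deduce the Gâteaux derivative directly from the cost-difference identity of Lemma \ref{lemma:P1_COSTDIFFERENCE} by perturbing $K$ along an arbitrary direction $K'$ and passing to the limit. Fix $\alpha \in I$, $K, K' \in L^2([0,T], \mathbb{R}^{k \times d})$ and, for $\epsilon \in \mathbb{R}$, set $K^\epsilon \coloneqq K + \epsilon K'$; let $\vartheta^{\epsilon,\alpha}$ denote the state covariance process satisfying \eqref{eq:variance} with parameter $K^\epsilon$. Applying Lemma \ref{lemma:P1_COSTDIFFERENCE} to the pair $(K, K^\epsilon)$ yields
$$
 J_1^\alpha(K^\epsilon) - J_1^\alpha(K) = \epsilon \int_0^T \langle K'_t,\, 2(B^\top P_t^K + R K_t) \vartheta_t^{\epsilon,\alpha} \rangle_F \, dt + \epsilon^2 \int_0^T \langle K'_t,\, R K'_t \vartheta_t^{\epsilon,\alpha} \rangle_F \, dt.
$$
Dividing by $\epsilon$ and sending $\epsilon \to 0$, the second summand is $O(\epsilon)$ and vanishes, while the first reduces (formally) to $\int_0^T \langle K'_t, 2(B^\top P_t^K + R K_t) \vartheta_t^{K,\alpha} \rangle_F \, dt$, which is exactly the claimed expression for $\nabla_K J_1^\alpha(K)$.

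To make the passage to the limit rigorous, I first need to show that $\vartheta^{\epsilon,\alpha} \to \vartheta^{K,\alpha}$ in a sufficiently strong topology, say uniformly on $[0,T]$, as $\epsilon \to 0$. This is obtained by subtracting the two linear Lyapunov-type ODEs satisfied by $\vartheta^{\epsilon,\alpha}$ and $\vartheta^{K,\alpha}$, writing the resulting equation for $\Delta^\epsilon_t \coloneqq \vartheta_t^{\epsilon,\alpha} - \vartheta_t^{K,\alpha}$ as
$$
 \tfrac{\partial}{\partial t} \Delta^\epsilon_t = (A + B K^\epsilon_t)\Delta^\epsilon_t + \Delta^\epsilon_t (A + B K^\epsilon_t)^\top + \epsilon B K'_t \vartheta_t^{K,\alpha} + \epsilon \vartheta_t^{K,\alpha} (B K'_t)^\top, \quad \Delta^\epsilon_0 = 0,
$$
and applying Gronwall's inequality, using that $\vartheta^{K,\alpha}$ is uniformly bounded on $[0,T]$ and that $K^\epsilon \to K$ in $L^2$. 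This gives $\sup_{t \in [0,T]} |\Delta^\epsilon_t| \le C \epsilon$ for some constant $C$ depending on $K, K', A, B, T$ and the initial covariance.

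With this uniform convergence in hand, the first term after dividing by $\epsilon$ becomes $\int_0^T \langle K'_t, 2(B^\top P_t^K + R K_t)\vartheta_t^{\epsilon,\alpha}\rangle_F \, dt$, which converges to $\int_0^T \langle K'_t, 2(B^\top P_t^K + R K_t)\vartheta_t^{K,\alpha}\rangle_F \, dt$ by dominated convergence, since $P^K \in \mathcal{C}([0,T], \mathbb{S}^d)$ and $R \in \mathcal{C}([0,T], \mathbb{S}_+^k)$ are bounded, $K \in L^2$, and the $\vartheta^{\epsilon,\alpha}$ are uniformly bounded for small $\epsilon$. Similarly the $\epsilon^2$ remainder term, divided by $\epsilon$, is bounded by $\epsilon \|K'\|_{L^2}^2 \overline{\lambda}^R \sup_{\epsilon}\|\vartheta^{\epsilon,\alpha}\|_{L^\infty}$ and tends to zero.

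The main obstacle is controlling the continuity of $K \mapsto \vartheta^{K,\alpha}$ in the appropriate norm: since $K$ sits only in $L^2([0,T])$, the coefficient $A + BK$ of the Lyapunov ODE is not uniformly bounded in $L^\infty$, so some care is needed to apply Gronwall's inequality with $L^1$ coefficients and to ensure the limit interchange. Once this is established, the identification of the derivative follows automatically from the cost-difference formula.
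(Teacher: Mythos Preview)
Your approach is correct and essentially identical to the paper's proof: both apply Lemma \ref{lemma:P1_COSTDIFFERENCE} to the pair $(K, K+\epsilon K')$, use Gronwall's inequality to obtain the uniform convergence $\vartheta^{\epsilon,\alpha} \to \vartheta^{K,\alpha}$, and then pass to the limit under the integral. Your write-up is in fact more explicit than the paper's (which simply asserts the Gronwall step), and the concern you flag about $K$ being only $L^2$ is easily handled by the standard Gronwall inequality with $L^1$ coefficients on $[0,T]$, so it is not a genuine obstacle.
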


\begin{proof}
    Define the perturbed parameter $K^\epsilon \coloneqq K + \epsilon K' \in L^2([0,T], \mathbb{R}^{k \times d})$ with $\epsilon > 0$ in an arbitrary direction $K' \in L^2([0,T], \mathbb{R}^{k \times d})$. Denote by $\vartheta^{\epsilon, \alpha}$ the function satisfying \eqref{eq:variance} with the perturbed parameter $K^\epsilon$ for player $\alpha$. An application of Gronwall's inequality shows that $\vartheta^{\epsilon, \alpha} \to \vartheta^{K,\alpha}$ uniformly as $\epsilon \searrow 0$. By Lemma \ref{lemma:P1_COSTDIFFERENCE}, the cost difference for $K$ and the perturbed parameter $K^\epsilon$ is
    \begin{equation*}
        J_1^\alpha(K^\epsilon) - J_1^\alpha(K) = \int_0^T \left[ \epsilon \langle K_t', 2 (B^\top P_t^K + R K_t) \vartheta_t^{\epsilon, \alpha} \rangle_F + \epsilon^2 \langle K_t', R K_t' \vartheta_t^{\epsilon, \alpha} \rangle_F \right] \, dt.
    \end{equation*}
    Hence, the Gâteaux derivative of the cost functional $J_1$ at $K$ in the direction of an arbitrary perturbation of $K$ is given by 
    \begin{equation*}
        \begin{split}
            \frac{\partial}{\partial \epsilon} J_1^\alpha(K^\epsilon) \Big|_{\epsilon = 0} &= \lim\limits_{\epsilon \searrow 0} \frac{1}{\epsilon} \left(J_1^\alpha(K^\epsilon) - J_1^\alpha(K)\right) = \int_0^T \langle K_t', 2 (B^\top P_t^K + R K_t) \vartheta_t^{K,\alpha} \rangle_F \, dt,
        \end{split}
    \end{equation*}
    where interchanging limit and integral is justified by the uniform convergence of $\vartheta^{\epsilon, \alpha}$.
\end{proof}
For $K^{(\ell)}$ defined via \eqref{eq:P1_graddescent} and $P^{(\ell)}$ satisfying \eqref{eq:ODE_P} for parameter $K^{(\ell)}$, let for all $t \in [0,T]$,
\begin{equation}
\label{eq:DK_mainpart}
    \mathcal{D}_{K,t}^{(\ell)} \coloneqq 2 (B^\top P_t^{(\ell)} + R K_t^{(\ell)}).
\end{equation}
Hence, $\nabla_K J_1^\alpha(K^{(\ell)}) = \mathcal{D}_{K,\cdot}^{(\ell)} \vartheta^{(\ell), \alpha}$, where $\vartheta^{(\ell), \alpha}$ is the state covariance satisfying \eqref{eq:variance} for parameter $K^{(\ell)}$ for all $\alpha \in I$.

The next lemma provides a result for the cost difference of two consecutive slope parameters defined through the gradient descent scheme introduced in \eqref{eq:P1_graddescent}.

\begin{lemma}
\label{lemma:P1_ITERATION_CD}
Let $\alpha \in I$. For $\ell \in \mathbb{N}_0$ let $K^{(\ell)}$ and $K^{(\ell+1)}$ be defined via \eqref{eq:P1_graddescent}. Let $\vartheta^{(\ell+1),\alpha}$ satisfy \eqref{eq:variance} for parameter $K^{(\ell+1)}$. Then
    \begin{equation*}
        J_1^\alpha(K^{(\ell+1)}) - J_1^\alpha(K^{(\ell)}) \le -\eta_K \int_0^T \left[ \left(\lambda_{\mathrm{min}}(\vartheta_t^{(\ell+1), \alpha}) - \eta_K \lambda_{\mathrm{max}}(\vartheta_t^{(\ell+1), \alpha}) \overline{\lambda}^R \right) \lvert \mathcal{D}_{K,t}^{(\ell)} \rvert^2 \right] \, dt.
    \end{equation*}
\end{lemma}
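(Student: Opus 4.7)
The plan is to apply the cost difference identity of Lemma~\ref{lemma:P1_COSTDIFFERENCE} to $K=K^{(\ell)}$ and $K'=K^{(\ell+1)}$, substitute the explicit form of the gradient step \eqref{eq:P1_graddescent}, and then bound the two Frobenius pairings using eigenvalue estimates on $\vartheta^{(\ell+1),\alpha}$.

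First, by the definition of the update \eqref{eq:P1_graddescent} and the identity $\nabla_K J_1^\alpha(K^{(\ell)}) = \mathcal{D}_{K,\cdot}^{(\ell)}\vartheta^{(\ell),\alpha}$ from Lemma~\ref{lemma:K_cost_derivative} together with the notation in \eqref{eq:DK_mainpart}, I observe that the preconditioning by $(\vartheta^{(\ell),\alpha})^{-1}$ exactly cancels the covariance factor in the gradient, leaving the clean increment
\begin{equation*}
    K_t^{(\ell+1)} - K_t^{(\ell)} = -\eta_K \mathcal{D}_{K,t}^{(\ell)}, \quad \text{a.e. } t\in[0,T].
\end{equation*}

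Next, I invoke Lemma~\ref{lemma:P1_COSTDIFFERENCE} with this increment. Since $P^{K^{(\ell)}} = P^{(\ell)}$ satisfies \eqref{eq:ODE_P} for parameter $K^{(\ell)}$ and $\vartheta'^{,\alpha}=\vartheta^{(\ell+1),\alpha}$, I obtain
\begin{equation*}
    J_1^\alpha(K^{(\ell+1)}) - J_1^\alpha(K^{(\ell)}) = \int_0^T \Big[ -\eta_K \langle \mathcal{D}_{K,t}^{(\ell)}, \mathcal{D}_{K,t}^{(\ell)} \vartheta_t^{(\ell+1),\alpha}\rangle_F + \eta_K^2 \langle \mathcal{D}_{K,t}^{(\ell)}, R\, \mathcal{D}_{K,t}^{(\ell)} \vartheta_t^{(\ell+1),\alpha}\rangle_F \Big]\, dt.
\end{equation*}

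The final step is to estimate each pairing via trace inequalities. For the linear (descent) term, I use that $(\mathcal{D}_{K,t}^{(\ell)})^\top \mathcal{D}_{K,t}^{(\ell)}$ is positive semidefinite together with the standard bound $\mathrm{tr}(MS)\ge \lambda_{\min}(S)\,\mathrm{tr}(M)$ for $M\succcurlyeq 0$ and $S\in\bar{\mathbb{S}}_+^d$, giving
\begin{equation*}
    \langle \mathcal{D}_{K,t}^{(\ell)}, \mathcal{D}_{K,t}^{(\ell)} \vartheta_t^{(\ell+1),\alpha}\rangle_F \ge \lambda_{\min}(\vartheta_t^{(\ell+1),\alpha})\, |\mathcal{D}_{K,t}^{(\ell)}|^2.
\end{equation*}
For the quadratic (penalty) term, the analogous upper bound $\mathrm{tr}(MS)\le \lambda_{\max}(S)\,\mathrm{tr}(M)$ applied to $M=(\mathcal{D}_{K,t}^{(\ell)})^\top R\, \mathcal{D}_{K,t}^{(\ell)} \succcurlyeq 0$, combined with $\lambda_{\max}(R(t))\le \overline{\lambda}^R$, yields
\begin{equation*}
    \langle \mathcal{D}_{K,t}^{(\ell)}, R\, \mathcal{D}_{K,t}^{(\ell)}\vartheta_t^{(\ell+1),\alpha}\rangle_F \le \lambda_{\max}(\vartheta_t^{(\ell+1),\alpha})\, \overline{\lambda}^R\, |\mathcal{D}_{K,t}^{(\ell)}|^2.
\end{equation*}
Substituting these two inequalities into the cost difference and factoring out $-\eta_K$ gives the claimed bound. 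The only non-routine point is recognizing that the preconditioning eliminates $\vartheta^{(\ell),\alpha}$ from the increment while the covariance of the new policy $\vartheta^{(\ell+1),\alpha}$ necessarily appears in the resulting estimate; this mismatch is precisely the source of the non-uniform almost-smoothness feature alluded to in the discussion following Theorem~\ref{theorem:K_CONVERGENCE}, and it will have to be handled subsequently by the uniform boundedness of the iterates $(K^{(\ell)})_{\ell\in\mathbb{N}_0}$.
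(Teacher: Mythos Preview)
Your proof is correct and follows essentially the same approach as the paper's own proof: apply Lemma~\ref{lemma:P1_COSTDIFFERENCE} with the increment $K^{(\ell+1)}-K^{(\ell)}=-\eta_K\mathcal{D}_{K}^{(\ell)}$, then bound the two Frobenius pairings via the trace inequalities $\lambda_{\min}(S)\,\mathrm{tr}(M)\le \mathrm{tr}(MS)\le \lambda_{\max}(S)\,\mathrm{tr}(M)$ for $M,S\succcurlyeq 0$. You spell out the eigenvalue estimates a bit more explicitly than the paper does, but the argument is identical.
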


\begin{proof}
    By Lemma \ref{lemma:P1_COSTDIFFERENCE},
    \begin{equation*}
        \begin{split}
            J_1^\alpha(K^{(\ell+1)}) &- J_1^\alpha(K^{(\ell)}) \\
            &= \int_0^T \left[ -\eta_K \langle \mathcal{D}_{K,t}^{(\ell)}, 2 (B^\top P_t^{(\ell)} + R K_t^{(\ell)}) \vartheta_t^{(\ell+1), \alpha} \rangle_F + \eta_K^2 \langle \mathcal{D}_{K,t}^{(\ell)}, R \mathcal{D}_{K,t}^{(\ell)} \vartheta_t^{(\ell+1), \alpha} \rangle_F \right] \, dt \\
            &= -\eta_K \int_0^T \left[ \langle \mathcal{D}_{K,t}^{(\ell)}, \mathcal{D}_{K,t}^{(\ell)} \vartheta_t^{(\ell+1), \alpha} \rangle_F - \eta_K \langle \mathcal{D}_{K,t}^{(\ell)}, R \mathcal{D}_{K,t}^{(\ell)} \vartheta_t^{(\ell+1), \alpha} \rangle_F \right] \, dt \\
            &\le -\eta_K \int_0^T \left[ \left(\lambda_{\mathrm{min}}(\vartheta_t^{(\ell+1), \alpha}) - \eta_K \lambda_{\mathrm{max}}(\vartheta_t^{(\ell+1), \alpha})  \overline{\lambda}^R \right) \lvert \mathcal{D}_{K,t}^{(\ell)} \rvert^2 \right] \, dt,
        \end{split}
    \end{equation*}
    where $\mathcal{D}_{K}^{(\ell)}$ defined in \eqref{eq:DK_mainpart} was used. 
\end{proof}

The next lemma proves a gradient dominance result (also known as Polyak-{\L}ojasiewicz inequality), which is a widely used tool to prove convergence in the policy gradient literature.

\begin{lemma}
\label{lemma:P1_GRADDOMINANCE_VARBOUND}
Suppose Assumption \ref{assumption:RandCOV} holds. Let $M^{\vartheta,*}$ satisfy $M^{\vartheta,*} I_d \succcurlyeq \vartheta_t^{*, \alpha}$ for all $t \in [0,T]$ and $\alpha \in I$. Let $K^*$ be the optimal slope defined in \eqref{eq:systemcontrol}. For $\ell \in \mathbb{N}_0$ let $K^{(\ell)}$ be defined via \eqref{eq:P1_graddescent}. Then for all $\alpha \in I$,
    \begin{equation*}
        0 \ge J_1^\alpha(K^*) - J_1^\alpha(K^{(\ell)}) \ge - \frac{M^{\vartheta,*}}{2 \underline{\lambda}^R} \int_0^T \lvert \mathcal{D}_{K,t}^{(\ell)} \rvert^2 \, dt.
    \end{equation*}
\end{lemma}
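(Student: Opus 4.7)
\textbf{Proof proposal for Lemma \ref{lemma:P1_GRADDOMINANCE_VARBOUND}.}

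The first inequality $J_1^\alpha(K^*)-J_1^\alpha(K^{(\ell)})\le 0$ is immediate from the fact that $K^*$ is, by construction (see \eqref{eq:systemcontrol}), the global minimiser of $J_1^\alpha$; this minimiser is player-independent because the problem $\min_K J_1^\alpha(K)$ coincides with a classical finite-horizon LQ control problem whose Riccati equation \eqref{eq:PI_riccati} does not involve $\alpha$. The only substantive task is therefore the lower bound, and the plan is to deduce it from the cost-difference identity of Lemma \ref{lemma:P1_COSTDIFFERENCE} by completing the square in the slope increment.

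First I apply Lemma \ref{lemma:P1_COSTDIFFERENCE} with $K'=K^*$ and $K=K^{(\ell)}$, obtaining
\begin{equation*}
J_1^\alpha(K^*)-J_1^\alpha(K^{(\ell)})=\int_0^T\!\left[\langle K^*_t-K^{(\ell)}_t,\,\mathcal D_{K,t}^{(\ell)}\vartheta_t^{*,\alpha}\rangle_F+\langle K^*_t-K^{(\ell)}_t,\,R(K^*_t-K^{(\ell)}_t)\vartheta_t^{*,\alpha}\rangle_F\right] dt,
\end{equation*}
where the quadratic term uses the covariance $\vartheta^{*,\alpha}$ at $K^*$ in line with the hypothesis, and $\mathcal D_{K,t}^{(\ell)}=2(B^\top P^{(\ell)}_t+RK^{(\ell)}_t)$ as defined in \eqref{eq:DK_mainpart}. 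Writing $\Delta_t\coloneqq K^*_t-K^{(\ell)}_t$, I then complete the square pointwise in $t$:
\begin{equation*}
\langle \Delta_t,\mathcal D_{K,t}^{(\ell)}\vartheta_t^{*,\alpha}\rangle_F+\langle \Delta_t,R\Delta_t\,\vartheta_t^{*,\alpha}\rangle_F=\bigl\langle \Delta_t+\tfrac12 R^{-1}\mathcal D_{K,t}^{(\ell)},\,R\bigl(\Delta_t+\tfrac12 R^{-1}\mathcal D_{K,t}^{(\ell)}\bigr)\vartheta_t^{*,\alpha}\bigr\rangle_F-\tfrac14\langle R^{-1}\mathcal D_{K,t}^{(\ell)},\mathcal D_{K,t}^{(\ell)}\vartheta_t^{*,\alpha}\rangle_F.
\end{equation*}

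Since $R(t)\succ 0$ and $\vartheta_t^{*,\alpha}\succcurlyeq 0$, the completed-square term is non-negative (this is the standard cyclic-trace argument: $\langle M,RM\vartheta\rangle_F=\mathrm{tr}(R^{1/2}M\vartheta M^\top R^{1/2})\ge 0$), so the integrand is bounded below by $-\tfrac14\langle R^{-1}\mathcal D_{K,t}^{(\ell)},\mathcal D_{K,t}^{(\ell)}\vartheta_t^{*,\alpha}\rangle_F$. Next I estimate this residual: using cyclicity of the trace, $\mathrm{tr}\bigl((\mathcal D_{K,t}^{(\ell)})^\top R^{-1}\mathcal D_{K,t}^{(\ell)}\vartheta_t^{*,\alpha}\bigr)\le M^{\vartheta,*}\,\mathrm{tr}\bigl((\mathcal D_{K,t}^{(\ell)})^\top R^{-1}\mathcal D_{K,t}^{(\ell)}\bigr)\le(M^{\vartheta,*}/\underline\lambda^R)\lvert\mathcal D_{K,t}^{(\ell)}\rvert^2$, where the first bound uses $\vartheta_t^{*,\alpha}\preccurlyeq M^{\vartheta,*}I_d$ applied to the PSD matrix $(\mathcal D_{K,t}^{(\ell)})^\top R^{-1}\mathcal D_{K,t}^{(\ell)}$ and the second uses $R(t)^{-1}\preccurlyeq (\underline\lambda^R)^{-1}I_k$.

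Integrating in $t$ yields $J_1^\alpha(K^*)-J_1^\alpha(K^{(\ell)})\ge-\frac{M^{\vartheta,*}}{4\underline\lambda^R}\int_0^T\lvert\mathcal D_{K,t}^{(\ell)}\rvert^2\,dt$, which is at least as strong as the claimed bound with constant $M^{\vartheta,*}/(2\underline\lambda^R)$ and finishes the proof. The main potential pitfall lies in the completion-of-the-square step: one must be careful that both the linear and quadratic terms in $\Delta_t$ are weighted by the \emph{same} covariance $\vartheta_t^{*,\alpha}$ (which is why Lemma \ref{lemma:P1_COSTDIFFERENCE} must be invoked in the direction $K=K^{(\ell)}$, $K'=K^*$, not the reverse), and that Assumption \ref{assumption:RandCOV} together with a well-posedness argument for \eqref{eq:variance} guarantees the existence of the uniform upper bound $M^{\vartheta,*}$ on $\vartheta_t^{*,\alpha}$ used to convert the covariance-weighted residual into a player-independent multiple of $\lvert\mathcal D_{K,t}^{(\ell)}\rvert^2$.
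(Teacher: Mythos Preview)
Your proof is correct and follows essentially the same route as the paper: apply Lemma \ref{lemma:P1_COSTDIFFERENCE} with $K'=K^*$, $K=K^{(\ell)}$, complete the square in $\Delta_t=K^*_t-K^{(\ell)}_t$, discard the non-negative squared term, and bound the residual using $\vartheta_t^{*,\alpha}\preccurlyeq M^{\vartheta,*}I_d$ and $R^{-1}\preccurlyeq(\underline\lambda^R)^{-1}I_k$. The only difference is that you complete the square with the optimal shift $\tfrac12 R^{-1}\mathcal D_{K,t}^{(\ell)}$, yielding the sharper constant $M^{\vartheta,*}/(4\underline\lambda^R)$, whereas the paper effectively uses the shift $R^{-1}\mathcal D_{K,t}^{(\ell)}$ (via the inequality $\langle \Delta,D\vartheta\rangle_F+\langle \Delta,R\Delta\vartheta\rangle_F\ge\tfrac12(\langle \Delta,R\Delta\vartheta\rangle_F-\langle R^{-1}D,D\vartheta\rangle_F)$) and obtains $M^{\vartheta,*}/(2\underline\lambda^R)$.
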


\begin{proof}
    By Lemma \ref{lemma:P1_COSTDIFFERENCE},
    \begin{equation*}
        \begin{split}
            J_1^\alpha(K^*) - J_1^\alpha(K^{(\ell)}) &= \int_0^T \left[ \langle K_t^* - K_t^{(\ell)}, \mathcal{D}_{K,t}^{(\ell)} \vartheta_t^{*, \alpha} \rangle_F + \langle K_t^* - K_t^{(\ell)}, R (K_t^* - K_t^{(\ell)}) \vartheta_t^{*, \alpha} \rangle_F \right] \, dt \\
            &\ge \frac{1}{2} \int_0^T \Big[ \langle (K_t^* - K_t^{(\ell)}), R (K_t^* - K_t^{(\ell)}) \vartheta_t^{*, \alpha} \rangle_F - \langle R^{-1} \mathcal{D}_{K,t}^{(\ell)}, \mathcal{D}_{K,t}^{(\ell)} \vartheta_t^{*, \alpha} \rangle_F \Big] \, dt \\
            &\ge - \frac{1}{2} \int_0^T \left[ \langle R^{-1} \mathcal{D}_{K,t}^{(\ell)}, \mathcal{D}_{K,t}^{(\ell)} \vartheta_t^{*, \alpha} \rangle_F \right] \, dt \\
            &\ge - \frac{1}{2} \int_0^T \left[ \frac{\lambda_{\max}(\vartheta_t^{*, \alpha})}{\underline{\lambda}^R} \lvert \mathcal{D}_{K,t}^{(\ell)} \rvert^2 \right] \, dt ,
        \end{split}
    \end{equation*}
    where the first inequality follows from
    \begin{equation*}
        \langle \Delta K, D \vartheta \rangle_F + \langle \Delta K, R \Delta K \vartheta \rangle_F \ge \frac{1}{2} \left( \langle \Delta K, R \Delta K \vartheta \rangle_F - \langle R^{-1} D, D \vartheta \rangle_F \right) 
    \end{equation*}
    with $\Delta K \coloneqq K_t^* - K_t^{(\ell)}$ and $D \coloneqq \mathcal{D}_{K,t}^{(\ell)}$. The second inequality holds due to the positive (semi)-definiteness of $R$ and the covariance matrix. Then, by the definition of $M^{\vartheta,*}$,
    \begin{equation*}
        J_1^\alpha(K^*) - J_1^\alpha(K^{(\ell)}) \ge - \frac{M^{\vartheta,*}}{2 \underline{\lambda}^R} \int_0^T \lvert \mathcal{D}_{K,t}^{(\ell)} \rvert^2 \, dt.
    \end{equation*}
    Finally, as $K^*$ is the optimal slope parameter, $J_1^\alpha(K^*) \le J_1^\alpha(K^{(\ell)})$.
\end{proof}
Note that Lemma \ref{lemma:P1_ITERATION_CD} depends explicitly on the covariance dynamics $\vartheta^{(\ell+1), \alpha}$ and thus also on the player $\alpha \in I$, which distinguishes it from \cite[Proposition 3.9]{giegrich_convergence_2022}.

The next proposition establishes a uniform upper bound for the iterates $(K^{(\ell)})_{\ell \in \mathbb{N}_0}$ defined via \eqref{eq:P1_graddescent}. This is not only a crucial component to find uniform bounds for the eigenvalues of the covariance matrix, but also important for the subsequent convergence analysis of the intercept parameter. The proof adapts \cite[Proposition 3.5 (2)]{giegrich_convergence_2022} to the present setting.

\begin{proposition}
    
\label{prop:BOUND_K}
    Let $K^{(0)} \in L^\infty([0,T], \mathbb{R}^{k \times d})$ be the initial slope parameter and let $P^{(0)}$ satisfy \eqref{eq:ODE_P} for parameter $K^{(0)}$. Let $K^*$ be the optimal slope parameter defined in \eqref{eq:systemcontrol}. For $\ell \in \mathbb{N}$ let $K^{(\ell)}$ be defined via \eqref{eq:P1_graddescent}. Then there exists a $C_0^K > 0$ such that for all $\ell \in \mathbb{N}_0$ and $\eta_K \in (0, 1/(2 \overline{\lambda}^R))$, $\lVert K^{(\ell)} \rVert_{L^\infty} \le C_0^K$ and $\lVert K^* \rVert_{L^\infty} \le C_0^K$ with
    \begin{equation*}
        C_0^K \coloneqq \lVert K^{(0)} \rVert_{L^\infty} + \frac{\lVert B \rVert_{L^\infty}}{\underline{\lambda}^R} \lVert P^{(0)} \rVert_{L^\infty}.
    \end{equation*}
\end{proposition}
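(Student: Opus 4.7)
The plan is to rewrite the update \eqref{eq:P1_graddescent} in a closed form that eliminates the player dependence, then establish a monotonicity property for the value matrices $P^{(\ell)}$, and finally conclude the uniform bound on $K^{(\ell)}$ by a simple induction, together with an optimality argument for $K^*$.

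First, substituting the explicit gradient formula \eqref{eq:J1_K} into \eqref{eq:P1_graddescent} cancels the covariance factor $\vartheta_t^{(\ell),\alpha}$ and yields the player-independent affine recursion
\[
K^{(\ell+1)}_t = (I_k - 2\eta_K R(t))\,K^{(\ell)}_t - 2\eta_K B(t)^\top P^{(\ell)}_t, \quad \text{a.e.~} t \in [0,T].
\]
For $\eta_K \in (0, 1/(2\overline{\lambda}^R))$, the matrix $I_k - 2\eta_K R(t)$ is positive definite with spectral norm bounded by $1 - 2\eta_K \underline{\lambda}^R < 1$, so this is a contractive recursion driven by $-2\eta_K B^\top P^{(\ell)}$.

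Next, I would show that $P^{(\ell+1)}_t \preceq P^{(\ell)}_t$ pointwise for all $\ell$. Writing $\Delta K = K^{(\ell+1)} - K^{(\ell)} = -2\eta_K(B^\top P^{(\ell)} + R K^{(\ell)})$ and $\Delta P = P^{(\ell+1)} - P^{(\ell)}$, subtracting the two instances of \eqref{eq:ODE_P} and using the identity $R K^{(\ell)} + B^\top P^{(\ell)} = -\Delta K/(2\eta_K)$ to simplify the cross terms, one finds that $\Delta P$ satisfies the linear backward Lyapunov ODE
\[
\partial_t \Delta P_t + (A + BK^{(\ell+1)}_t)^\top \Delta P_t + \Delta P_t (A + BK^{(\ell+1)}_t) + \Delta K_t^\top (R(t) - I_k/\eta_K)\Delta K_t = 0, \quad \Delta P_T = 0.
\]
For $\eta_K < 1/(2\overline{\lambda}^R)$ the source term $\Delta K^\top(R - I_k/\eta_K)\Delta K$ is negative semidefinite pointwise, so the variation-of-constants representation of this matrix Lyapunov equation gives $\Delta P_t \preceq 0$ for all $t$. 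Since $P^{(\ell)}_t$ is itself PSD (the solution of \eqref{eq:ODE_P} has PSD terminal $\bar Q$ and PSD integrand), iterating yields $0 \preceq P^{(\ell)}_t \preceq P^{(0)}_t$, and in particular $\lVert P^{(\ell)} \rVert_{L^\infty} \le \lVert P^{(0)} \rVert_{L^\infty}$ uniformly in $\ell$.

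Given this uniform bound on $P^{(\ell)}$, the bound on $K^{(\ell)}$ follows by a short induction: assuming $\lVert K^{(\ell)} \rVert_{L^\infty} \le C_0^K$, the closed-form recursion together with the submultiplicative estimate $\lvert (I_k - 2\eta_K R(t))M \rvert \le (1 - 2\eta_K \underline{\lambda}^R)\lvert M \rvert$ gives
\[
\lVert K^{(\ell+1)} \rVert_{L^\infty} \le (1 - 2\eta_K \underline{\lambda}^R) C_0^K + 2\eta_K \lVert B \rVert_{L^\infty}\lVert P^{(0)} \rVert_{L^\infty},
\]
which is bounded by $C_0^K$ precisely because $\frac{\lVert B \rVert_{L^\infty}}{\underline{\lambda}^R}\lVert P^{(0)} \rVert_{L^\infty} \le C_0^K$ by the definition of $C_0^K$. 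For the bound on $K^*$, I would invoke the dynamic-programming optimality of the LQR value matrix: since $P^*$ corresponds to the optimal feedback while $P^{(0)}$ corresponds to the feedback $K^{(0)}$, one has $P^*_t \preceq P^{(0)}_t$ pointwise in the Loewner order, so $\lVert K^* \rVert_{L^\infty} \le \frac{\lVert B \rVert_{L^\infty}}{\underline{\lambda}^R}\lVert P^* \rVert_{L^\infty} \le C_0^K$.

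The main obstacle is the monotonicity step $P^{(\ell+1)} \preceq P^{(\ell)}$, since the closed-form recursion alone cannot close the induction without a uniform control of $\lVert P^{(\ell)} \rVert_{L^\infty}$. The computation depends crucially on the covariance preconditioning in \eqref{eq:P1_graddescent}: it is precisely this normalization that allows the cross terms in the ODE for $\Delta P$ to collapse into the clean negative-semidefinite quadratic form $\Delta K^\top(R - I_k/\eta_K)\Delta K$. Without the preconditioning, residual covariance factors would remain in the source term and the pointwise matrix comparison would fail.
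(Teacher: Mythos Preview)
Your proposal is correct and follows essentially the same approach as the paper, which defers to \cite[Proposition~3.5~(2)]{giegrich_convergence_2022}: the key step is precisely the Loewner monotonicity $P^{(\ell+1)}\preceq P^{(\ell)}$ obtained from the Lyapunov ODE for $\Delta P$ with the negative-semidefinite source $\Delta K^\top(R-I_k/\eta_K)\Delta K$, after which the contractive recursion $K^{(\ell+1)}=(I_k-2\eta_K R)K^{(\ell)}-2\eta_K B^\top P^{(\ell)}$ closes the induction. Your observation that the covariance preconditioning is what makes this decoupling possible is exactly the point emphasised in Remark~\ref{rmk:gradient_normalization}.
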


The following lemma shows the existence of player-independent uniform lower and upper bounds for the eigenvalues of the covariance along the gradient descent updates.

\begin{lemma}
\label{lemma:BOUND_VAR}
    Suppose Assumption \ref{assumption:RandCOV} holds. For $\ell \in \mathbb{N}_0$ let $K^{(\ell)}$ be be defined via \eqref{eq:P1_graddescent}.  For all $\alpha \in I$ let $\vartheta^{(\ell), \alpha}$ satisfy \eqref{eq:variance} for parameter $K^{(\ell)}$. For $\eta_K \in (0, 1/(2 \overline{\lambda}^R))$ there exist $M^\vartheta, M_\vartheta > 0$ such that $M^\vartheta I_d \succcurlyeq \vartheta_t^{(\ell), \alpha} \succcurlyeq M_\vartheta I_d$ for all $t \in [0,T]$ and $\alpha \in [0,1]$.
\end{lemma}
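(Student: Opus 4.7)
The plan is to derive both bounds from the variation-of-constants representation of the covariance ODE \eqref{eq:variance}, exploiting the uniform control of the closed-loop drift provided by Proposition \ref{prop:BOUND_K} together with the non-degeneracy of the initial covariance from Assumption \ref{assumption:RandCOV}.

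First, I introduce the state transition matrix $\Phi^{(\ell)}_{t,s}\in\mathbb{R}^{d\times d}$ associated with the closed-loop drift $A+BK^{(\ell)}$, defined by $\partial_t \Phi^{(\ell)}_{t,s}=(A_t+B_tK_t^{(\ell)})\Phi^{(\ell)}_{t,s}$ with $\Phi^{(\ell)}_{s,s}=I_d$. Crucially, $\Phi^{(\ell)}_{t,s}$ is independent of $\alpha$, since $\alpha$ enters \eqref{eq:variance} only through the initial condition $\vartheta_0^\alpha$. Variation of constants then gives
\begin{equation*}
    \vartheta_t^{(\ell),\alpha} = \Phi^{(\ell)}_{t,0}\vartheta_0^\alpha (\Phi^{(\ell)}_{t,0})^\top + \int_0^t \Phi^{(\ell)}_{t,s} D_sD_s^\top (\Phi^{(\ell)}_{t,s})^\top\,ds.
\end{equation*}
By Proposition \ref{prop:BOUND_K}, $\|K^{(\ell)}\|_{L^\infty}\le C_0^K$ uniformly in $\ell$ whenever $\eta_K\in(0,1/(2\overline{\lambda}^R))$, and combined with the continuity of $A$ and $B$ this yields a uniform $L^\infty$ bound $M_\Phi$ on the closed-loop drift. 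Gronwall's inequality, applied both to $\Phi^{(\ell)}_{t,s}$ and to its inverse (which satisfies the adjoint ODE $\partial_t(\Phi^{(\ell)}_{t,s})^{-1}=-(\Phi^{(\ell)}_{t,s})^{-1}(A_t+B_tK_t^{(\ell)})$), then gives $\|\Phi^{(\ell)}_{t,s}\|_2\le e^{M_\Phi T}$ and $\|(\Phi^{(\ell)}_{t,s})^{-1}\|_2\le e^{M_\Phi T}$ for all $0\le s\le t\le T$ and $\ell\in\mathbb{N}_0$.

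For the upper bound, using Assumption \ref{assumption:RandCOV} together with the $L^\infty$ bound on $D$, I estimate each term in the above representation in operator norm, obtaining
\begin{equation*}
    \|\vartheta_t^{(\ell),\alpha}\|_2 \le e^{2M_\Phi T}\overline{C}_0^\vartheta + T e^{2M_\Phi T}\|D\|_{L^\infty}^2,
\end{equation*}
which yields $\vartheta_t^{(\ell),\alpha}\preccurlyeq M^\vartheta I_d$ for a suitable player- and iteration-independent constant $M^\vartheta$. For the lower bound, I discard the positive semidefinite integral term and invoke the non-degeneracy $\vartheta_0^\alpha\succcurlyeq\underline{C}_0^\vartheta I_d$ from Assumption \ref{assumption:RandCOV}, combined with the elementary inequality $\Phi\Phi^\top\succcurlyeq\|\Phi^{-1}\|_2^{-2}I_d$ valid for any invertible $\Phi$, to obtain
\begin{equation*}
    \vartheta_t^{(\ell),\alpha}\succcurlyeq \underline{C}_0^\vartheta\,\Phi^{(\ell)}_{t,0}(\Phi^{(\ell)}_{t,0})^\top \succcurlyeq \underline{C}_0^\vartheta e^{-2M_\Phi T}I_d,
\end{equation*}
so $M_\vartheta\coloneqq \underline{C}_0^\vartheta e^{-2M_\Phi T}>0$ works.

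The main obstacle is conceptual rather than technical: one must identify the representation that cleanly separates the $K^{(\ell)}$-dependence (controlled by Proposition \ref{prop:BOUND_K}) from the $\alpha$-dependence (controlled by Assumption \ref{assumption:RandCOV}), so that the uniform bounds propagate simultaneously across all iterates $\ell$ and all players $\alpha\in I$. Once this decoupling is achieved via the transition matrix $\Phi^{(\ell)}$, the rest reduces to standard Gronwall-type estimates and basic positive-definite matrix manipulations.
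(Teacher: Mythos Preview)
Your proof is correct and follows essentially the same strategy as the paper: combine the uniform bound $\|K^{(\ell)}\|_{L^\infty}\le C_0^K$ from Proposition~\ref{prop:BOUND_K} with Gronwall-type estimates on the Lyapunov ODE~\eqref{eq:variance} and the initial-covariance bounds from Assumption~\ref{assumption:RandCOV}. The only difference is presentational: the paper cites \cite[Lemma~3.7]{giegrich_convergence_2022} for the two key inequalities on $\|\vartheta^{(\ell),\alpha}\|_{L^\infty}$ and $\lambda_{\min}(\vartheta_t^{(\ell),\alpha})$, whereas you derive them explicitly via the variation-of-constants representation and the transition matrix $\Phi^{(\ell)}_{t,s}$, making the separation of $\ell$- and $\alpha$-dependence fully transparent.
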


\begin{proof}
    To find the upper bound, we need to show that $\lambda_{\mathrm{min}}(M^\vartheta I_d - \vartheta_t^{(\ell), \alpha}) \ge 0$ for all $t \in [0,T]$, $\ell \in \mathbb{N}_0$ and $\alpha \in I$. To that end, note $\lambda_{\mathrm{min}}(M^\vartheta I_d - \vartheta_t^{(\ell), \alpha}) = M^\vartheta - \lambda_{\mathrm{max}}(\vartheta_t^{(\ell), \alpha})$. Moreover, $\lambda_{\mathrm{max}}(\vartheta_t^{(\ell), \alpha}) = \lVert \vartheta_t^{(\ell), \alpha} \rVert_2 \le \lvert \vartheta_t^{(\ell), \alpha} \rvert \le \lVert \vartheta^{(\ell), \alpha} \rVert_{L^\infty}$. For the lower bound, we need to show $\lambda_{\mathrm{min}}(\vartheta_t^{(\ell), \alpha}) \ge M_\vartheta$. Using similar arguments as \cite[Lemma 3.7]{giegrich_convergence_2022} yields
    \begin{equation*}
        \lVert \vartheta^{(\ell), \alpha} \rVert_{L^\infty} \le T(\lvert \vartheta_0^\alpha \rvert + \lVert D \rVert_{L^\infty}^2) \exp{(2T ( \lVert A \rVert_{L^\infty} + \lVert B \rVert_{L^\infty} \lVert K^{(\ell)} \rVert_{L^\infty}))},
    \end{equation*}
    and
    \begin{equation*}
        \lambda_{\mathrm{min}}(\vartheta_t^{(\ell), \alpha}) \ge \lambda_{\mathrm{min}}(\vartheta_0^\alpha) \exp{(- 2T(\lVert A \rVert_{L^\infty} + \lVert B \rVert_{L^\infty} \lVert K^{(\ell)} \rVert_{L^\infty}))}.
    \end{equation*}
    Combining these two inequalities under Assumption \ref{assumption:RandCOV} with the uniform bound $C_0^K$ from Proposition \ref{prop:BOUND_K} gives player-independent uniform bounds $M_\vartheta$ and $M^\vartheta$ such that for all $\alpha \in I$,
    \begin{equation*}
        \lVert \vartheta^{(\ell), \alpha} \rVert_{L^\infty} \le T( \overline{C}_0^\vartheta  + \lVert D \rVert_{L^\infty}^2) \exp{(2T ( \lVert A \rVert_{L^\infty} + \lVert B \rVert_{L^\infty} C_0^K)} \eqqcolon M^\vartheta,
    \end{equation*}
    and
    \begin{equation*}
        \lambda_{\mathrm{min}}(\vartheta_t^{(\ell), \alpha}) \ge \underline{C}_0^\vartheta \exp{(- 2T(\lVert A \rVert_{L^\infty} + \lVert B \rVert_{L^\infty} C_0^K))} \eqqcolon M_\vartheta.
    \end{equation*}
\end{proof}

The next corollary combines the cost difference in Lemma \ref{lemma:P1_ITERATION_CD} with the uniform bounds for the eigenvalues of the covariance from Lemma \ref{lemma:BOUND_VAR}.

\begin{corollary}
    \label{corollary:P1_ITERATION_CD_VARBOUND}
    Suppose Assumption \ref{assumption:RandCOV} holds. Let $\alpha \in I$. For $\ell \in \mathbb{N}$ let $K^{(\ell)}$ and $K^{(\ell+1)}$ be defined via \eqref{eq:P1_graddescent}. Denote by $M_\vartheta$ and $M^\vartheta$ the uniform bounds derived in Lemma \ref{lemma:BOUND_VAR}. Then for $\eta_K \in (0, 1/(2 \overline{\lambda}^R))$,
    \begin{equation*}
        J_1^\alpha(K^{(\ell+1)}) - J_1^\alpha(K^{(\ell)}) \le -\eta_K \left( M_\vartheta - \eta_K M^\vartheta \overline{\lambda}^R \right) \int_0^T \lvert \mathcal{D}_{K,t}^{(\ell)} \rvert^2 \, dt.
    \end{equation*}
\end{corollary}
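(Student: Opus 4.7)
The plan is to combine the two earlier results in a direct way, essentially by bounding the $t$-dependent coefficient in the integrand of Lemma \ref{lemma:P1_ITERATION_CD} uniformly in $t$ and $\alpha$ using Lemma \ref{lemma:BOUND_VAR}. Concretely, I would proceed as follows.

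First, since $\eta_K \in (0, 1/(2\overline{\lambda}^R))$, Lemma \ref{lemma:BOUND_VAR} applies and produces player-independent constants $M_\vartheta, M^\vartheta > 0$ such that $M_\vartheta I_d \preccurlyeq \vartheta_t^{(\ell+1),\alpha} \preccurlyeq M^\vartheta I_d$ for all $t \in [0,T]$ and $\alpha \in I$. Taking smallest and largest eigenvalues of this Loewner sandwich gives, for every $t$ and every $\alpha$,
\begin{equation*}
\lambda_{\min}(\vartheta_t^{(\ell+1),\alpha}) \ge M_\vartheta, \qquad \lambda_{\max}(\vartheta_t^{(\ell+1),\alpha}) \le M^\vartheta.
\end{equation*}
Hence the coefficient appearing in the integrand of Lemma \ref{lemma:P1_ITERATION_CD} can be estimated from below by a constant,
\begin{equation*}
\lambda_{\min}(\vartheta_t^{(\ell+1),\alpha}) - \eta_K \lambda_{\max}(\vartheta_t^{(\ell+1),\alpha}) \overline{\lambda}^R \ge M_\vartheta - \eta_K M^\vartheta \overline{\lambda}^R.
\end{equation*}

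Second, I would multiply this pointwise inequality by the non-negative quantity $\lvert \mathcal{D}_{K,t}^{(\ell)}\rvert^2$, integrate over $[0,T]$, and then multiply by $-\eta_K < 0$, which flips the inequality. Plugging the result into the estimate of Lemma \ref{lemma:P1_ITERATION_CD} yields
\begin{equation*}
J_1^\alpha(K^{(\ell+1)}) - J_1^\alpha(K^{(\ell)}) \le -\eta_K \bigl(M_\vartheta - \eta_K M^\vartheta \overline{\lambda}^R\bigr)\int_0^T \lvert \mathcal{D}_{K,t}^{(\ell)}\rvert^2 \, dt,
\end{equation*}
which is exactly the claimed bound.

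There is no genuine obstacle here: the argument is bookkeeping that reassembles Lemma \ref{lemma:P1_ITERATION_CD} with the uniform eigenvalue control from Lemma \ref{lemma:BOUND_VAR}. The only points requiring care are (i) checking that the stepsize restriction $\eta_K \in (0, 1/(2\overline{\lambda}^R))$ is precisely what allows Lemma \ref{lemma:BOUND_VAR} to be invoked, and (ii) noting that although Lemma \ref{lemma:BOUND_VAR} is stated for $\vartheta^{(\ell),\alpha}$, it applies verbatim to $\vartheta^{(\ell+1),\alpha}$ since $\ell+1 \in \mathbb{N}_0$. The significance of this reformulation is that, in contrast to the $t$- and $\alpha$-dependent bound in Lemma \ref{lemma:P1_ITERATION_CD}, the constant $M_\vartheta - \eta_K M^\vartheta \overline{\lambda}^R$ is independent of the iteration $\ell$ and the player index, which will be crucial for obtaining a player-uniform linear convergence rate in Theorem \ref{theorem:K_CONVERGENCE} when combined with the gradient dominance bound of Lemma \ref{lemma:P1_GRADDOMINANCE_VARBOUND}.
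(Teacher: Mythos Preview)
Your proposal is correct and takes essentially the same approach as the paper, which simply states that the corollary combines Lemma \ref{lemma:P1_ITERATION_CD} with the uniform eigenvalue bounds from Lemma \ref{lemma:BOUND_VAR} without spelling out the details. Your write-up is in fact more explicit than what the paper provides.
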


Observe that the bound in Corollary \ref{corollary:P1_ITERATION_CD_VARBOUND} is now player-independent.

Now we state some  necessary constants used in the proof of Theorem \ref{theorem:K_CONVERGENCE}. Let $M^\vartheta$ and $M_\vartheta$ be the upper  and lower bounds derived in Lemma \ref{lemma:BOUND_VAR}, respectively. Let $M^{\vartheta,*} > 0$ be such that $M^{\vartheta,*} I_d \succcurlyeq \vartheta_t^{*, \alpha}$ for all $t \in [0,T]$ and $\alpha \in I$ and let $\overline{\lambda}^R, \underline{\lambda}^R > 0$ be such that $\overline{\lambda}^R I_d \succcurlyeq R(t) \succcurlyeq \underline{\lambda}^R I_d$ for all $t \in [0,T]$. Define 
\begin{equation}
\label{eq:constant_convergence_K}
    C_1^K \coloneqq \frac{M_\vartheta}{2 M^\vartheta \overline{\lambda}^R}, \quad
    C_2^K \coloneqq \frac{\underline{\lambda}^R M_\vartheta}{M^{\vartheta,*}},   \quad C_3^K \coloneqq \frac{1}{\underline{\lambda}^R M_\vartheta}.
\end{equation}
Note that 
$C^K_1\le 1/(2\overline{\lambda}^R)$ as $M^\vartheta 
 \ge M_\vartheta$. Moreover, $C_1^K \le 1/C_2^K$. To see this, suppose that Assumption \ref{assumption:RandCOV} holds. Then, note that there exists $M_\vartheta^* > 0$ such that $\mathrm{\lambda}_{\mathrm{min}}(\vartheta_t^{*,\alpha}) \ge M_\vartheta^* \ge M_\vartheta$ for all $t \in [0,T]$ and $\alpha \in [0,1]$ because $\lVert K^* \rVert_{L^\infty} \le C_0^K$ by Proposition \ref{prop:BOUND_K}.

\begin{proof}[Proof of Theorem \ref{theorem:K_CONVERGENCE}]
    By Lemma \ref{lemma:P1_GRADDOMINANCE_VARBOUND} and Corollary \ref{corollary:P1_ITERATION_CD_VARBOUND},
    for all $\alpha \in I$,
    \begin{equation*}
        \begin{split}
            J_1^\alpha(K^{(\ell+1)}) - J_1^\alpha(K^*) &= (J_1^\alpha(K^{(\ell+1)}) - J_1^\alpha(K^{(\ell)})) + (J_1^\alpha(K^{(\ell)}) - J_1^\alpha(K^*)) \\
            &\le - \eta_K \left( M_\vartheta - \eta_K M^\vartheta \overline{\lambda}^R \right) \int_0^T \lvert \mathcal{D}_{K,t}^{(\ell)} \rvert^2 \, dt + (J_1^\alpha(K^{(\ell)}) - J_1^\alpha(K^*)) \\
            &\le \left(1 - 2 \eta_K \frac{\underline{\lambda}^R}{M^{\vartheta,*}} \left( M_\vartheta - \eta_K M^\vartheta \overline{\lambda}^R \right) \right) (J_1^\alpha(K^{(\ell)}) - J_1^\alpha(K^*)) \\
            &\le \left(1 - \eta_K \frac{\underline{\lambda}^R M_\vartheta}{M^{\vartheta,*}} \right) (J_1^\alpha(K^{(\ell)}) - J_1^\alpha(K^*)),
        \end{split}
    \end{equation*}
    where the last inequality holds for   $\eta_K \in (0, C^K_1)$.
Integrating the above inequality with respect to $\alpha$ yields
    \begin{equation*}
         \lVert J_1^\cdot(K^{(\ell+1)}) - J_1^\cdot(K^*) \rVert_{L^2(I)} \le \left(1 - \eta_K C^K_2\right) \lVert J_1^\cdot(K^{(\ell)}) - J_1^\cdot(K^*) \rVert_{L^2(I)},
    \end{equation*}
    where $1 - \eta_K C_2^K > 0$ for $\eta_K \in (0, 1/C_2^K)$.
    Since $\mathcal{D}_{K}^* = 2 (B^\top P^*+RK^*) \equiv 0$, by Lemma \ref{lemma:P1_COSTDIFFERENCE}, 
    \begin{equation*}
        \begin{split}
            \lVert J_1^\cdot(K^{(\ell+1)}) - J_1^\cdot(K^*) \rVert_{L^2(I)}^2 &= \int_0^1 \left\lvert \int_0^T \langle K_t^{(\ell+1)} - K_t^*, R (K_t^{(\ell+1)} - K_t^*) \vartheta_t^{(\ell+1), \alpha} \rangle_F \, dt \right\rvert^2 \, d\alpha   \\
            &\ge (\underline{\lambda}^R)^2 M_\vartheta^2 \lVert K^{(\ell+1)} - K^* \rVert_{L^2([0,T])}^4.
        \end{split}
    \end{equation*}
    This implies that 
    \begin{equation*}
        \begin{split}
            \lVert K^{(\ell+1)} - K^* \rVert_{L^2([0,T])}^2 &\le C^K_3 \lVert J_1^\cdot(K^{(\ell+1)}) - J_1^\cdot(K^*) \rVert_{L^2(I)} \\
            &\le 
            C^K_3
            \left(1 - \eta_K C^K_2 \right) \lVert J_1^\cdot(K^{(\ell)}) - J_1^\cdot(K^*) \rVert_{L^2(I)} \\
            &\le C^K_3 \left(1 - \eta_K C^K_2 \right) ^{\ell+1} \lVert J_1^\cdot(K^{(0)}) - J_1^\cdot(K^*) \rVert_{L^2(I)},
        \end{split}
    \end{equation*}
    which finishes the proof.
\end{proof}

\subsection{Convergence analysis for the intercept with fixed slope and graphon aggregate}
\label{subsection:convergence_analysis_G}

Recall that the player $\alpha \in I$, the slope parameter $K^{(\ell)}$ obtained through the gradient descent scheme \eqref{eq:P1_graddescent} after $\ell \in \mathbb{N}_0$ iterations and the graphon aggregate $Z^\alpha \in \mathcal{C}([0,T], \mathbb{R}^d)$ are fixed. In particular, by Proposition \ref{prop:BOUND_K}, $\lVert K^{(\ell)} \rVert_{L^\infty} \le C_0^K < \infty$. 

The following lemma gives the Gâteaux derivative of $J_2^{\ell, \alpha}$ defined by \eqref{eq:J_2_ell_alpha} with respect to $G$.

\begin{lemma}
\label{lemma:derivative_G}
    Let $\alpha \in I$. Let $G^\alpha \in L^2([0,T], \mathbb{R}^k)$. Let $\zeta^{(K^{(\ell)}, G),\alpha}$ satisfy \eqref{eq:ode_zeta_gradG} and let $\mu^{(K^{(\ell)}, G), \alpha}$ satisfy \eqref{eq:mean}. Then for all $G'^{,\alpha} \in L^2([0,T], \mathbb{R}^k)$,
    \begin{equation*}
        \frac{\partial}{\partial \epsilon} J_2^{\ell, \alpha}(G^\alpha + \epsilon G'^{,\alpha}) \Big|_{\epsilon = 0} = \int_0^T \langle (\nabla_G J_2^{\ell, \alpha}(G^\alpha))_t, G_t'^{,\alpha} \rangle \, dt,
    \end{equation*}
    where $(\nabla_G J_2^{\ell, \alpha}(G^\alpha))_t \coloneqq B^\top \zeta_t^{(K^{(\ell)}, G), \alpha} + 2R (K_t^{(\ell)} \mu_t^{(K^{(\ell)}, G), \alpha} + G_t^\alpha)$ for all $t \in [0,T]$.
\end{lemma}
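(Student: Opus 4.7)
\textbf{Proof proposal for Lemma \ref{lemma:derivative_G}.} The plan is to proceed via a standard adjoint/duality argument analogous to Lemma \ref{lemma:K_cost_derivative}, but tailored to the intercept parameter. Fix $\alpha \in I$, $G^\alpha, G'^{,\alpha} \in L^2([0,T],\mathbb R^k)$ and $\epsilon>0$. Writing $G^{\epsilon,\alpha}\coloneqq G^\alpha+\epsilon G'^{,\alpha}$ and denoting by $\mu^{\epsilon,\alpha}$ the solution of \eqref{eq:mean} with slope $K^{(\ell)}$ and intercept $G^{\epsilon,\alpha}$, linearity of \eqref{eq:mean} in $(\mu,G)$ gives the decomposition $\mu^{\epsilon,\alpha}=\mu^{(K^{(\ell)},G),\alpha}+\epsilon\,\tilde\mu^{\alpha}$, where $\tilde\mu^{\alpha}\in \mathcal{C}([0,T],\mathbb R^d)$ solves the sensitivity equation
\begin{equation*}
    \frac{\partial \tilde\mu_t^{\alpha}}{\partial t}=(A+BK_t^{(\ell)})\tilde\mu_t^{\alpha}+BG_t'^{,\alpha},\quad t\in[0,T];\quad \tilde\mu_0^{\alpha}=0.
\end{equation*}
In particular, Gronwall's inequality yields $\tilde\mu^{\alpha}\in L^\infty([0,T],\mathbb R^d)$ with a bound independent of $\epsilon$.

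Next, I would expand $J_2^{\ell,\alpha}(G^{\epsilon,\alpha})$ as a quadratic polynomial in $\epsilon$ using the decomposition of $\mu^{\epsilon,\alpha}$, take $\partial_\epsilon|_{\epsilon=0}$, and justify the interchange of limit and integration by the uniform bounds above. This produces the ``raw'' directional derivative
\begin{equation*}
\begin{split}
    \frac{\partial}{\partial\epsilon}J_2^{\ell,\alpha}(G^{\epsilon,\alpha})\Big|_{\epsilon=0}
    &=\int_0^T 2\bigl\langle Q(\mu_t^{(K^{(\ell)},G),\alpha}-HZ_t^\alpha)+(K_t^{(\ell)})^\top R(K_t^{(\ell)}\mu_t^{(K^{(\ell)},G),\alpha}+G_t^\alpha),\tilde\mu_t^{\alpha}\bigr\rangle\,dt\\
    &\quad+\int_0^T 2\bigl\langle R(K_t^{(\ell)}\mu_t^{(K^{(\ell)},G),\alpha}+G_t^\alpha),G_t'^{,\alpha}\bigr\rangle\,dt
    +2\bigl\langle\bar Q(\mu_T^{(K^{(\ell)},G),\alpha}-\bar H Z_T^\alpha),\tilde\mu_T^{\alpha}\bigr\rangle.
\end{split}
\end{equation*}
The remaining task is to eliminate the $\tilde\mu^{\alpha}$-terms, which depend implicitly on the perturbation $G'^{,\alpha}$.

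To this end, I would introduce the adjoint state $\zeta^{(K^{(\ell)},G),\alpha}$ defined by \eqref{eq:ode_zeta_gradG}. Applying the product rule to $t\mapsto \langle\zeta_t^{(K^{(\ell)},G),\alpha},\tilde\mu_t^{\alpha}\rangle$, substituting the ODEs for $\zeta^{(K^{(\ell)},G),\alpha}$ and $\tilde\mu^{\alpha}$, and using $\tilde\mu_0^{\alpha}=0$ and the terminal condition $\zeta_T^{(K^{(\ell)},G),\alpha}=2\bar Q(\mu_T^{(K^{(\ell)},G),\alpha}-\bar H Z_T^\alpha)$, integration by parts gives the identity
\begin{equation*}
\begin{split}
    2\langle\bar Q(\mu_T^{(K^{(\ell)},G),\alpha}-\bar H Z_T^\alpha),\tilde\mu_T^{\alpha}\rangle
    &+\int_0^T 2\bigl\langle Q(\mu_t^{(K^{(\ell)},G),\alpha}-HZ_t^\alpha)\\
    &\quad+(K_t^{(\ell)})^\top R(K_t^{(\ell)}\mu_t^{(K^{(\ell)},G),\alpha}+G_t^\alpha),\tilde\mu_t^{\alpha}\bigr\rangle\,dt
    =\int_0^T\langle B^\top\zeta_t^{(K^{(\ell)},G),\alpha},G_t'^{,\alpha}\rangle\,dt.
\end{split}
\end{equation*}
Substituting this into the expression above collapses all $\tilde\mu^{\alpha}$-dependence and yields the claimed formula for $\nabla_G J_2^{\ell,\alpha}(G^\alpha)$.

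I do not expect any serious obstacles: the mild/classical well-posedness of \eqref{eq:mean} and \eqref{eq:ode_zeta_gradG} in $\mathcal{C}([0,T],\mathbb R^d)$ follows from continuity of the coefficients and $K^{(\ell)}\in L^\infty$ (cf.\ Proposition \ref{prop:BOUND_K}), which justifies both the integration by parts and the interchange of $\partial_\epsilon$ with the integral. The only mildly delicate point is the interchange of limit and integration in the terminal quadratic term, which follows from continuous dependence of $\mu^{\epsilon,\alpha}_T$ on $\epsilon$.
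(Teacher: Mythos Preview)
Your proposal is correct and follows essentially the same adjoint-based approach as the paper. The only difference is presentational: the paper introduces the Hamiltonian of the deterministic control problem \eqref{eq:J2_controlproblem} and invokes a known variational formula (\cite[Corollary 4.11]{carmona_lectures_2016}) to write the directional derivative as $\int_0^T \langle \partial_G \mathcal{H}^\alpha, G'^{,\alpha}_t\rangle\,dt$, whereas you unpack that formula explicitly by linearizing the state equation and integrating by parts against the adjoint $\zeta^{(K^{(\ell)},G),\alpha}$. Your version is self-contained and makes the mechanism transparent; the paper's is shorter by citation but relies on the reader looking up the referenced result.
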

\begin{proof}
    For any $\alpha \in I$, the Hamiltonian $\mathcal{H}^\alpha: [0,T] \times \mathbb{R}^d \times \mathbb{R}^k \times \mathbb{R}^d \to \mathbb{R}$ of the deterministic control problem \eqref{eq:J2_controlproblem} is given by
    \begin{equation*}
        \begin{split}  
            \mathcal{H}^\alpha(t, \mu, G, \zeta) &= \left[(A + B K_t^{(\ell)}) \mu + B G + \Bar{A} Z_t^\alpha \right]^\top \zeta \\
            &\quad + (\mu - H Z_t^\alpha)^\top Q (\mu - H Z_t^\alpha) + (K_t^{(\ell)} \mu + G)^\top R (K_t^{(\ell)} \mu + G). 
        \end{split}
    \end{equation*}
    For any $G^\alpha \in L^2([0,T], \mathbb{R}^k)$, by Pontryagin's maximum principle
    \cite[Chapter 2, Theorem 3.1]{yong_stochastic_1999}, the adjoint $\zeta^{(K^{(\ell)}, G), \alpha} \in \mathcal{C}([0,T], \mathbb{R}^d)$ satisfies \eqref{eq:ode_zeta_gradG} with policy parameters $(K^{(\ell)}, G^\alpha)$ and graphon aggregate $Z^\alpha$. Hence, using \cite[Corollary 4.11]{carmona_lectures_2016} the Gâteaux derivative of the cost functional $J_2^\alpha$ at $G^\alpha$ in the direction of an arbitrary perturbation $G'^{,\alpha} \in L^2([0,T], \mathbb{R}^k)$ is given by
    \begin{equation*}
        \begin{split}
            \frac{\partial}{\partial \epsilon} J_2^{\ell, \alpha}(G^\alpha + \epsilon G'^{,\alpha}) \Big|_{\epsilon = 0} &= \int_0^T \left\langle \frac{\partial \mathcal{H^\alpha}}{\partial G} (t, \mu_t^{(K^{(\ell)}, G), \alpha}, G_t^\alpha, \zeta_t^{(K^{(\ell)}, G),\alpha}), G_t'^{,\alpha} \right\rangle \, dt \\
            &= \int_0^T \langle B^\top \zeta_t^{(K^{(\ell)}, G), \alpha} + 2R (K_t^{(\ell)} \mu_t^{(K^{(\ell)}, G),\alpha} + G_t^\alpha), G_t'^{,\alpha} \rangle \, dt 
        \end{split}
    \end{equation*}
\end{proof}
To prove the convergence of the gradient descent scheme for the intercept parameter in \eqref{eq:P2_graddescent}, we rewrite the cost functional $J_2^{\ell,\alpha}$ and the gradient expression using integral operators. Firstly, split $J_2^{\ell, \alpha}$ into two parts and define for $G \in L^2([0,T], \mathbb{R}^k)$,

\begin{equation*}
    \begin{split}
        &J_{2,1}^{\ell, \alpha}(G) \coloneqq \left\lVert Q^{1/2} \left( \mu^{(K^{(\ell)}, G),\alpha} - H Z^\alpha \right) \right\rVert_{L^2([0,T])}^2 + \left\lvert \Bar{Q}^{1/2} \left(\mu_T^{(K^{(\ell)}, G),\alpha} - \Bar{H} Z_T^\alpha \right) \right\rvert^2, \\
        &J_{2,2}^{\ell, \alpha}(G) \coloneqq \left\lVert R^{1/2} \left(K^{(\ell)} \mu^{(K^{(\ell)}, G),\alpha} + G \right) \right\rVert_{L^2([0,T])}^2,
    \end{split}
\end{equation*}
where $\mu^{(K^{(\ell)}, G),\alpha}$ satisfies \eqref{eq:mean} for policy parameters $(K^{(\ell)}, G)$ and fixed graphon aggregate $Z^\alpha$.
Hence, $J_2^{\ell, \alpha}(G) = J_{2,1}^{\ell, \alpha}(G) + J_{2,2}^{\ell, \alpha}(G)$. Next, define the linear Volterra integral operators $\Gamma: L^2([0,T], \mathbb{R}^k) \to L^2([0,T], \mathbb{R}^k)$ such that for any $f \in L^2([0,T], \mathbb{R}^k)$,
\begin{equation}
    \label{eq:GAMMA_operator}
    \Gamma[f](t) \coloneqq \int_0^t Y(t) Y^{-1}(s) f(s) \, ds
\end{equation}
and $\Tilde{\Gamma}: L^2([0,T], \mathbb{R}^k) \to L^2([0,T], \mathbb{R}^k)$ such that
\begin{equation}
    \label{eq:GAMMAtilde_operator}
    \Tilde{\Gamma}[f](t) \coloneqq \int_0^t K_t^{(\ell)} Y(t) Y^{-1}(s) B f(s) \, ds
\end{equation}
with $Y \in \mathcal{C}([0,T], \mathbb{R}^{d \times d})$ satisfying
\begin{equation}
    \label{eq:fundamentalmatrix}
    \frac{\partial Y(t)}{\partial t} = (A + B K_t^{(\ell)}) Y(t), \quad t \in [0,T]; \quad Y(0) = I_d.
\end{equation} 
Moreover, let
\begin{equation}
    \label{eq:b1ANDb2}
    b_t^{(1), \alpha} \coloneqq \Gamma[\Bar{A} Z^\alpha](t) - H Z_t^\alpha, \quad b_t^{(2), \ell, \alpha} \coloneqq K_t^{(\ell)} \Gamma[\Bar{A} Z^\alpha](t), \quad c^\alpha \coloneqq \Gamma[\Bar{A} Z^\alpha](T) - \Bar{H} Z_T^\alpha.
\end{equation}
By variation of constants for ODEs \cite[Theorem 3.12]{teschl_ordinary_2012} and linearity, $\mu_t^{(K^{(\ell)}, G), \alpha} = \Gamma[B G](t) + \Gamma[\Bar{A} Z^\alpha](t)$ for all $t \in [0,T]$. Therefore,
\begin{equation*}
    \begin{split}
        &J_{2,1}^{\ell, \alpha}(G) = \left\lVert Q^{1/2} \left(\Gamma[B G] + b^{(1), \alpha} \right) \right\rVert_{L^2([0,T])}^2 + \left\lvert \Bar{Q}^{1/2} \left(\Gamma[B G](T) + c^\alpha \right) \right\rvert^2, \\
        &J_{2,2}^{\ell, \alpha}(G) = \left\lVert R^{1/2} \left( (\Tilde{\Gamma} + \mathrm{id})[G] + b^{(2), \ell, \alpha} \right) \right\rVert_{L^2([0,T])}^2,
    \end{split}
\end{equation*}
where 
$\mathrm{id}$ is the identity operator on 
$L^2([0,T], \mathbb R^k)$.

To prove the strong convexity of $G \mapsto J_2^{\ell, \alpha}(G)$, we show the convexity of $G \mapsto J_{2,1}^{\ell, \alpha}(G)$ and the strong convexity of $G \mapsto J_{2,2}^{\ell, \alpha}(G)$. Approaching the convergence of the intercept parameter using strong convexity differs from the existing literature for policy optimization  methods for mean field game or control problems, e.g., \cite{carmona_linear-quadratic_2019,  wang2021global,frikha_full_2024}. 

\begin{lemma}
\label{lemma:J21_convex}
    The function $L^2([0,T], \mathbb{R}^k) \ni G \mapsto J_{2,1}^{\ell, \alpha}(G)\in \mathbb R$ is convex.
\end{lemma}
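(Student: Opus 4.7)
The plan is to observe that $J_{2,1}^{\ell,\alpha}$ is the sum of two terms, each of which is the composition of an affine map in $G$ with a convex quadratic form; since composition of a convex function with an affine map preserves convexity, and the sum of convex functions is convex, the result follows.

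First I would note that the Volterra integral operator $\Gamma$ defined in \eqref{eq:GAMMA_operator} is linear, so the map
\begin{equation*}
L^2([0,T],\mathbb{R}^k) \ni G \mapsto \Gamma[BG] + b^{(1),\alpha} \in L^2([0,T],\mathbb{R}^d)
\end{equation*}
is affine. Likewise, using the regularity of $Y$ from \eqref{eq:fundamentalmatrix}, $\Gamma[BG]$ lies in $\mathcal{C}([0,T],\mathbb{R}^d)$, so the terminal evaluation $G \mapsto \Gamma[BG](T) + c^\alpha$ is affine from $L^2([0,T],\mathbb{R}^k)$ into $\mathbb{R}^d$.

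Next, since $Q \in \mathcal{C}([0,T], \bar{\mathbb{S}}^d_+)$, the quadratic functional $f \mapsto \lVert Q^{1/2} f \rVert_{L^2([0,T])}^2 = \int_0^T f(t)^\top Q(t) f(t)\, dt$ is convex on $L^2([0,T],\mathbb{R}^d)$ (pointwise convexity in $t$ is preserved under integration). Similarly, since $\bar{Q} \in \bar{\mathbb{S}}^d_+$, the map $x \mapsto \lvert \bar{Q}^{1/2} x \rvert^2 = x^\top \bar{Q} x$ is convex on $\mathbb{R}^d$.

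Finally, composing each convex quadratic with the corresponding affine map yields two convex functionals on $L^2([0,T],\mathbb{R}^k)$, and their sum equals $J_{2,1}^{\ell,\alpha}$. No serious obstacle is expected: the argument is purely structural and uses only linearity of $\Gamma$ and positive semi-definiteness of $Q$ and $\bar Q$. The only subtle point worth verifying in passing is that $\Gamma[BG](T)$ is well-defined as a bounded linear functional of $G$, which follows from the continuity of $Y$ on $[0,T]$ and the Cauchy–Schwarz inequality.
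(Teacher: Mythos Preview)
Your proposal is correct and follows essentially the same idea as the paper: both rely on the linearity of $G \mapsto \Gamma[BG]$ and the positive semidefiniteness of $Q$ and $\bar Q$. The paper writes out the second derivative $\nabla_G^2 J_{2,1}^{\ell,\alpha}(G)[G_1,G_2] = 2\langle \Gamma[BG_1], Q\,\Gamma[BG_2]\rangle_{L^2} + 2\langle \Gamma[BG_1](T), \bar Q\,\Gamma[BG_2](T)\rangle$ and reads off nonnegativity directly, whereas you invoke the general principle that a convex quadratic composed with an affine map is convex; these are the same argument at two levels of abstraction.
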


\begin{proof}
    First, note that the second derivative at $G \in L^2([0,T], \mathbb{R}^k)$ for $G_1, G_2 \in L^2([0,T], \mathbb{R}^k)$ is
    \begin{equation*}
    \nabla_G^2 J_{2,1}^{\ell, \alpha}(G)[G_1, G_2] = 2 \langle \Gamma[B G_1], Q \Gamma[B G_2] \rangle_{L^2} + 2 \langle \Gamma[B G_1](T), \Bar{Q} \Gamma[ B G_2](T) \rangle.
    \end{equation*}
    Then the convexity follows immediately because $Q(t) \succcurlyeq 0$ for all $t \in [0,T]$ and  $\Bar{Q} \succcurlyeq 0$.
\end{proof}

To prove the strong convexity of $G \mapsto J_{2,2}^{\ell, \alpha}(G)$, we utilize theory on Volterra integral equations; for an overview, see \cite{gripenberg_volterra_1990}.

\begin{definition}
\label{def:volterrakernel}
    A measurable function $\kappa: [0,T]^2 \to \mathbb{R}^{k \times k}$ with $\kappa(t,s) = \mathbf{0} \in \mathbb{R}^{k \times k}$ for $t < s$ is called a Volterra kernel.
\end{definition}

The following lemma provides an upper bound for the operator norm of the resolvent of the operator $\Tilde{\Gamma}$. This bound will be essential to prove the strong convexity of $G \mapsto J_{2,2}^{\ell, \alpha}(G)$.

\begin{lemma}
\label{lemma:resolvent}
    The operator resolvent $(\Tilde{\Gamma} + \mathrm{id})^{-1}: L^2([0,T], \mathbb{R}^k) \to L^2([0,T], \mathbb{R}^k)$ of the Volterra integral operator $\Tilde{\Gamma}$ defined in \eqref{eq:GAMMAtilde_operator} exists and
    \begin{equation*}
        \lVert (\Tilde{\Gamma} + \mathrm{id})^{-1} \rVert_{\mathrm{op}} \le \exp{\left( 2 T^2 \lVert B \rVert_{L^\infty}^2 (C_0^K)^2 \exp{(2T( \lVert A \rVert_{L^\infty} + \lVert B \rVert_{L^\infty} C_0^K))} + \frac{1}{2} \right)} \eqqcolon M_{\mathrm{res}}.
    \end{equation*}
\end{lemma}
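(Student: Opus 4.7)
The plan is to identify $\Tilde{\Gamma}$ as a Hilbert--Schmidt Volterra integral operator on $L^2([0,T],\mathbb{R}^k)$ with matrix-valued kernel
\begin{equation*}
\kappa(t,s) = K^{(\ell)}_t\, Y(t)\, Y^{-1}(s)\, B\, \mathds{1}_{\{s \le t\}},
\end{equation*}
derive a uniform pointwise bound on $\kappa$, and then invoke the sharp resolvent estimate of Dechevski and Persson \cite[Theorem 3.1]{dechevski_sharp_1994}.

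First, I would bound the transition matrix $Y(t)Y^{-1}(s)$ for $0 \le s \le t \le T$. Observing that $r \mapsto Y(r)Y^{-1}(s)$ solves the linear matrix ODE $\Phi'(r) = (A(r)+B(r)K^{(\ell)}_r)\Phi(r)$ with $\Phi(s) = I_d$, Gronwall's inequality together with the uniform bound $\|K^{(\ell)}\|_{L^\infty} \le C_0^K$ from Proposition \ref{prop:BOUND_K} yields
\begin{equation*}
|Y(t)Y^{-1}(s)| \le \exp\bigl((t-s)(\|A\|_{L^\infty} + \|B\|_{L^\infty} C_0^K)\bigr) \le \exp\bigl(T(\|A\|_{L^\infty} + \|B\|_{L^\infty} C_0^K)\bigr).
\end{equation*}
Combined with $\|K^{(\ell)}\|_{L^\infty} \le C_0^K$, this produces a pointwise kernel estimate $|\kappa(t,s)|^2 \le (C_0^K)^2 \|B\|_{L^\infty}^2 \exp(2T(\|A\|_{L^\infty}+\|B\|_{L^\infty}C_0^K))$ on the triangle $\{s \le t\}$, from which the Hilbert--Schmidt norm is
\begin{equation*}
\|\kappa\|_{L^2([0,T]^2)}^2 = \int_0^T\!\!\int_0^t |\kappa(t,s)|^2 \, ds\, dt \le \tfrac{T^2}{2}(C_0^K)^2 \|B\|_{L^\infty}^2 \exp\bigl(2T(\|A\|_{L^\infty} + \|B\|_{L^\infty} C_0^K)\bigr).
\end{equation*}

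Second, since $\Tilde{\Gamma}$ is a Volterra operator with $L^2$ kernel, it is quasi-nilpotent (zero spectral radius), so $\mathrm{id} + \Tilde{\Gamma}$ is boundedly invertible via the Neumann series $\sum_{n\ge 0}(-\Tilde{\Gamma})^n$. I would then apply \cite[Theorem 3.1]{dechevski_sharp_1994}, which gives a sharp exponential bound of the form $\|(\mathrm{id}+\Tilde{\Gamma})^{-1}\|_{\mathrm{op}} \le \exp(4\|\kappa\|_{L^2}^2 + \tfrac{1}{2})$ for Volterra operators on $L^2$. Substituting the Hilbert--Schmidt estimate above reproduces exactly the claimed bound $M_{\mathrm{res}}$.

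The main obstacle is ensuring that the Dechevski--Persson estimate, stated in the scalar case, applies to the matrix-valued kernel here. This can be handled by decomposing $L^2([0,T],\mathbb{R}^k)$ componentwise and applying the scalar estimate to each entry of $\kappa$, or equivalently by viewing \cite[Theorem 3.1]{dechevski_sharp_1994} in its vector-valued form since the proof uses only the Hilbert space structure of the range. As a fallback, a direct Neumann series argument using the classical iterated Volterra estimate $\|\Tilde{\Gamma}^n\|_{\mathrm{op}} \le \|\kappa\|_{L^2}^n/\sqrt{n!}$ converges absolutely and produces an exponential bound of the same qualitative form, which suffices for the downstream use of $M_{\mathrm{res}}$ in Propositions \ref{proposition:J2_strongconvexity} and \ref{proposition:LSmooth}.
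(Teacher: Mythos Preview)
Your proposal is correct and follows essentially the same route as the paper: identify the Volterra kernel, bound it pointwise via Gronwall and Proposition~\ref{prop:BOUND_K}, obtain a Hilbert--Schmidt bound, and then apply \cite[Theorem~3.1]{dechevski_sharp_1994}. The only notable difference is that you bound the transition matrix $Y(t)Y^{-1}(s)$ directly (yielding $\exp(T(\cdot))$), whereas the paper bounds $\|Y\|_{L^\infty}$ and $\|Y^{-1}\|_{L^\infty}$ separately; your approach is slightly cleaner and your sharper factor $T^2/2$ from integrating over the triangle compensates for the constant you quote in Dechevski--Persson, so the final bound $M_{\mathrm{res}}$ matches.
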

\begin{proof}
    Let $\kappa(t,s) \coloneqq \mathds{1}_{[t > s]} K_t^{(\ell)} Y(t) Y^{-1}(s) B$, where $Y$ satisfies \eqref{eq:fundamentalmatrix}. This is a Volterra kernel by Definition \ref{def:volterrakernel}. Note that $\Tilde{\Gamma}$ can be expressed using the kernel $\kappa$ as $\Tilde{\Gamma}[f](t) = \int_0^T \kappa(t,s) f(s) \, ds$. To show that the Volterra kernel $\kappa$ has a kernel resolvent using \cite[Chapter 9, Corollary 3.16]{gripenberg_volterra_1990}, we need to prove that $\kappa$ satisfies the square integrability condition
    \begin{equation*}
        \int_0^T \int_0^T \lvert \kappa(t,s) \rvert^2 \, ds \, dt < \infty.
    \end{equation*}
    Observe that $Y^{-1} \in \mathcal{C}([0,T], \mathbb{R}^{d \times d})$ satisfies for all $t \in [0,T]$,
    \begin{equation}
    \label{eq:fundamentalmatrix_inverse}
        \frac{\partial Y^{-1}(t)}{\partial t} = - Y^{-1}(t) (A + B K_t^{(\ell)}), \quad Y^{-1}(0) = I_d.
    \end{equation}Then, applying Gronwall's inequality on \eqref{eq:fundamentalmatrix} and \eqref{eq:fundamentalmatrix_inverse} yields
    \begin{equation*}
        \begin{split}
             &\lVert Y \rVert_{L^\infty} \le \exp{(T(\lVert A \rVert_{L^\infty} + \lVert B \rVert_{L^\infty} \lVert K^{(\ell)} \rVert_{L^\infty}))}, \quad \text{and} \\
             &\lVert Y^{-1} \rVert_{L^\infty} \le \exp{(T(\lVert A \rVert_{L^\infty} + \lVert B \rVert_{L^\infty} \lVert K^{(\ell)} \rVert_{L^\infty}))}.
        \end{split}
    \end{equation*}
    Hence, $\kappa$ satisfies the square integrability condition as
    \begin{equation}
        \label{eq:square_integr_kernel}
        \int_0^T \int_0^T \lvert \kappa(t,s) \rvert^2 \, ds \, dt \le T^2 \lVert K^{(\ell)} \rVert_{L^\infty}^2 \lVert B \rVert_{L^\infty}^2 \exp{(2T(\lVert A \rVert_{L^\infty} + \lVert B \rVert_{L^\infty} \lVert K^{(\ell)} \rVert_{L^\infty}))} < \infty.
    \end{equation}
    Now, let $f \in L^2([0,T], \mathbb{R}^k)$ and $\gamma \neq 0$ be arbitrary. Then by \cite[Chapter 9, Theorem 3.6]{gripenberg_volterra_1990}, the Volterra integral equation $(\Tilde{\Gamma} + \gamma \mathrm{id})x = f$ has a unique solution $x \in L^2([0,T], \mathbb{R}^k)$. Hence, $(\Tilde{\Gamma} + \gamma \mathrm{id})^{-1}f \in L^2([0,T], \mathbb{R}^k)$ and the resolvent $( \Tilde{\Gamma} + \gamma \mathrm{id})^{-1}$ exists.
    To find the upper bound, we first observe that the spectrum of $\Tilde{\Gamma}$ is given by $\sigma(\Tilde{\Gamma}) = \{ 0 \}$.
    Then, \cite[Theorem 3.1]{dechevski_sharp_1994} implies the following bound for the resolvent of the operator $\Tilde{\Gamma}$,
    \begin{equation}
        \begin{split}
            \lVert (\Tilde{\Gamma} + \mathrm{id})^{-1} \rVert_{\mathrm{op}} &\le \exp{\left( 2 \lVert  \Tilde{\Gamma} \rVert_{\mathrm{HS}}^2 + \frac{1}{2} \right)},
        \end{split}
    \end{equation}
    where $\lVert \cdot \rVert_{\mathrm{HS}}$ denotes the Hilbert-Schmidt norm. The integral operator $\Tilde{\Gamma}$ is a Hilbert-Schmidt integral operator as the Volterra kernel satisfies the square integrability condition \eqref{eq:square_integr_kernel} -- see, for example \cite[Theorem 6.12]{brezis_functional_2011}. Hence, by \cite[Problem 2, p.\@499]{brezis_functional_2011}, $\lVert \Tilde{\Gamma} \rVert_{\mathrm{HS}}^2 \le \int_0^T \int_0^T \lvert \kappa(t,s) \rvert^2 \, ds \, dt.$ Using \eqref{eq:square_integr_kernel}, the operator norm can be bounded,
    \begin{equation*}
        \begin{split}
            \lVert (\Tilde{\Gamma} + \mathrm{id})^{-1} \rVert_{\mathrm{op}} &\le \exp{\left( 2 T^2 \lVert B \rVert_{L^\infty}^2 \lVert K^{(\ell)} \rVert_{L^\infty}^2 \exp{(2T( \lVert A \rVert_{L^\infty} + \lVert B \rVert_{L^\infty} \lVert K^{(\ell)} \rVert_{L^\infty}))} + \frac{1}{2} \right)} \\
            &\le \exp{\left( 2 T^2 \lVert B \rVert_{L^\infty}^2 (C_0^K)^2 \exp{(2T( \lVert A \rVert_{L^\infty} + \lVert B \rVert_{L^\infty} C_0^K))} + \frac{1}{2} \right)} < \infty,
        \end{split}
    \end{equation*}
    where the uniform bound from Proposition \ref{prop:BOUND_K} was used.
\end{proof}

The following lemma proves the strong convexity of $G \mapsto J_{2,2}^{\ell, \alpha}(G)$.

\begin{lemma}
\label{lemma:J22_stronglyconvex}
    Let $m \coloneqq (2 \underline{\lambda}^R)/M_{\mathrm{res}}^2 > 0$ with $M_{\mathrm{res}}$ introduced in Lemma \ref{lemma:resolvent}. Then the function $L^2([0,T], \mathbb{R}^k) \ni G \mapsto J_{2,2}^{\ell, \alpha}(G)$ is $m$-strongly convex w.r.t. $\lVert \cdot \rVert_{L^2([0,T])}$. In particular, $m$ does not depend on the fixed slope parameter $K^{(\ell)}$ and graphon aggregate $Z^\alpha$.
\end{lemma}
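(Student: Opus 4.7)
The plan is to exploit the quadratic structure of $J_{2,2}^{\ell,\alpha}$ in $G$ and reduce strong convexity to a lower bound on $\|(\tilde{\Gamma}+\mathrm{id})[G]\|_{L^2}$ in terms of $\|G\|_{L^2}$, which will follow directly from the resolvent bound of Lemma \ref{lemma:resolvent}. Since $G \mapsto (\tilde{\Gamma}+\mathrm{id})[G] + b^{(2),\ell,\alpha}$ is affine, the functional $G \mapsto J_{2,2}^{\ell,\alpha}(G)$ is the composition of an affine map with the quadratic form induced by $R$, so it is a quadratic polynomial in $G$.

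First I would compute the Hessian of $J_{2,2}^{\ell,\alpha}$. For any $G, G_1 \in L^2([0,T], \mathbb{R}^k)$, a direct expansion gives
\begin{equation*}
\nabla_G^2 J_{2,2}^{\ell,\alpha}(G)[G_1, G_1] \;=\; 2\,\bigl\langle (\tilde{\Gamma}+\mathrm{id})[G_1],\; R\,(\tilde{\Gamma}+\mathrm{id})[G_1] \bigr\rangle_{L^2}.
\end{equation*}
Using $R(t) \succcurlyeq \underline{\lambda}^R I_k$ for all $t\in[0,T]$, this is bounded below by $2\underline{\lambda}^R \|(\tilde{\Gamma}+\mathrm{id})[G_1]\|_{L^2([0,T])}^2$. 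Thus establishing $m$-strong convexity reduces to showing
\begin{equation*}
\|(\tilde{\Gamma}+\mathrm{id})[G_1]\|_{L^2([0,T])}^2 \;\ge\; \tfrac{1}{M_{\mathrm{res}}^2}\,\|G_1\|_{L^2([0,T])}^2.
\end{equation*}

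This lower bound is immediate from Lemma \ref{lemma:resolvent}: writing $G_1 = (\tilde{\Gamma}+\mathrm{id})^{-1}(\tilde{\Gamma}+\mathrm{id})[G_1]$ and applying the operator norm bound $\|(\tilde{\Gamma}+\mathrm{id})^{-1}\|_{\mathrm{op}} \le M_{\mathrm{res}}$ yields $\|G_1\|_{L^2} \le M_{\mathrm{res}}\,\|(\tilde{\Gamma}+\mathrm{id})[G_1]\|_{L^2}$, which rearranges to the desired inequality. Combining the two estimates gives $\nabla_G^2 J_{2,2}^{\ell,\alpha}(G)[G_1, G_1] \ge m\,\|G_1\|_{L^2}^2$ with $m = 2\underline{\lambda}^R / M_{\mathrm{res}}^2$, from which strong convexity follows by a standard Taylor expansion argument for quadratic functionals.

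For the uniformity claim, I would observe that $\underline{\lambda}^R$ depends only on $R$, while $M_{\mathrm{res}}$, as defined in Lemma \ref{lemma:resolvent}, depends only on $T$, $\|A\|_{L^\infty}$, $\|B\|_{L^\infty}$, and the uniform bound $C_0^K$ from Proposition \ref{prop:BOUND_K}, none of which depend on the particular iterate $\ell$, player index $\alpha$, or graphon aggregate $Z^\alpha$. The graphon aggregate $Z^\alpha$ enters only through the constant term $b^{(2),\ell,\alpha}$, which drops out of the Hessian entirely. I do not anticipate a significant obstacle here: the main work has already been done in establishing the uniform bound $C_0^K$ (Proposition \ref{prop:BOUND_K}) and in the Hilbert-Schmidt/Dechevski resolvent estimate (Lemma \ref{lemma:resolvent}); the remaining argument is essentially two lines of operator inequalities.
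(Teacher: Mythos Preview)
Your proposal is correct and follows essentially the same approach as the paper's proof: compute the Hessian $\nabla_G^2 J_{2,2}^{\ell,\alpha}(G)[G_1,G_1] = 2\langle(\tilde{\Gamma}+\mathrm{id})[G_1], R(\tilde{\Gamma}+\mathrm{id})[G_1]\rangle_{L^2}$, bound below using $R\succcurlyeq\underline{\lambda}^R I_k$, and then invoke the resolvent bound from Lemma~\ref{lemma:resolvent} via $G_1=(\tilde{\Gamma}+\mathrm{id})^{-1}(\tilde{\Gamma}+\mathrm{id})[G_1]$. Your added paragraph on why $m$ is independent of $\ell$, $\alpha$, and $Z^\alpha$ makes the uniformity claim more explicit than the paper does.
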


\begin{proof}
Observe  that the second derivative at $G \in L^2([0,T], \mathbb{R}^k)$ for $G_1, G_2 \in L^2([0,T], \mathbb{R}^k)$ is given by $\nabla_G^2 J_{2,2}^{\ell,\alpha}(G)[G_1, G_2] = 2 \langle (\Tilde{\Gamma} + \mathrm{id})[G_1], R (\Tilde{\Gamma} + \mathrm{id})[G_2] \rangle_{L^2}$. Proving the strong convexity of $G \mapsto J_{2,2}^{\ell,\alpha}(G)$
is reduced  to proving the existence of $m > 0$ such that for any $G \in L^2([0,T], \mathbb{R}^k)$, $\nabla_G^2 J_{2,2}^{\ell, \alpha}(G)[G, G] \ge m \lVert G \rVert_{L^2([0,T])}^2$.
    To that end, observe that $\underline{\lambda}^R > 0$ as $R \in \mathcal{C}([0,T], \mathbb{S}_{+}^k)$ and 
    \begin{equation*}
        \begin{split}
            \nabla_G^2 J_{2,2}^{\ell,\alpha}(G)[G, G] 
            &= 2 \langle (\Tilde{\Gamma} + \mathrm{id})[G], R (\Tilde{\Gamma} + \mathrm{id})[G] \rangle_{L^2}  \ge 2 \underline{\lambda}^R \lVert (\Tilde{\Gamma} + \mathrm{id})[G] \rVert_{L^2([0,T])}^2.
        \end{split}
    \end{equation*}
Note that 
    \begin{equation*}
        \begin{split}
            \lVert G \rVert_{L^2([0,T])}^2 &= \lVert (\Tilde{\Gamma} + \mathrm{id})^{-1}(\Tilde{\Gamma} + \mathrm{id})[G] \rVert_{L^2([0,T])}^2 \\
            &\le \lVert (\Tilde{\Gamma} + \mathrm{id})^{-1} \rVert_{\mathrm{op}}^2 \lVert (\Tilde{\Gamma} + \mathrm{id})[G] \rVert_{L^2([0,T])}^2 \\
            &\le M_{\mathrm{res}}^2 \lVert (\Tilde{\Gamma} + \mathrm{id})[G] \rVert_{L^2([0,T])}^2 = \frac{2 \underline{\lambda}^R}{m} \lVert (\Tilde{\Gamma} + \mathrm{id})[G] \rVert_{L^2([0,T])}^2,
        \end{split}
    \end{equation*}
    where the upper bound for the operator norm of the resolvent from Lemma \ref{lemma:resolvent} was used. Multiplying the inequality by $m$ concludes the proof.
\end{proof}

The previous results allow to show the strong convexity of $G \mapsto J_{2}^{\ell, \alpha}(G)$.

\begin{proof}[Proof of Proposition \ref{proposition:J2_strongconvexity}]
The strong convexity follows immediately from Lemmas \ref{lemma:J21_convex} and \ref{lemma:J22_stronglyconvex} as $J_2^{\ell, \alpha}$ is the sum of a convex and $m$-strongly convex function with $m$ defined in Lemma \ref{lemma:J22_stronglyconvex}.
\end{proof}

Next, we prove the Lipschitz smoothness property.

\begin{proof}[Proof of Proposition \ref{proposition:LSmooth}]
    By Taylor expansion and the quadratic structure of $J_2^{\ell, \alpha}$,
    \begin{equation}
        \label{eq:L_smooth_taylor}
        J_2^{\ell, \alpha}(G') - J_2^{\ell, \alpha}(G) = \langle G' - G, \nabla_G J_2^{\ell, \alpha}(G) \rangle_{L^2} + \nabla_G^2 J_2^{\ell, \alpha}(G)[G' - G, G' - G].
    \end{equation}
    Hence, it remains to bound the second order term. Recall the second derivatives for $J_{2,1}^{\ell,\alpha}$ and $J_{2,2}^{\ell,\alpha}$ from Lemmas \ref{lemma:J21_convex} and \ref{lemma:J22_stronglyconvex}, respectively. Then
    \begin{equation*}
        \begin{split}
            \nabla_G^2 &J_2^{\ell,\alpha}(G)[G' - G, G' - G] \\
            &= 2 \langle \Gamma[B(G' - G)], Q \Gamma[B(G' - G)] \rangle_{L^2} + 2 \langle \Gamma[B (G' - G)](T), \Bar{Q} \Gamma[B (G' - G)](T) \rangle \\
            &\quad\quad\quad+ 2 \langle (\Tilde{\Gamma} + \mathrm{id})[G' - G], R (\Tilde{\Gamma} + \mathrm{id})[G' - G] \rangle_{L^2} \\
            &\le 2 \overline{\lambda}^Q \lVert \Gamma[B (G' - G)] \rVert_{L^2([0,T])}^2 + 2 \overline{\lambda}^{\Bar{Q}} \lvert \Gamma [B (G' - G)](T) \rvert^2 + 2 \overline{\lambda}^R \lVert (\Tilde{\Gamma} + \mathrm{id})[G' - G] \rVert_{L^2([0,T])}^2 \\
            &\le 2 \left( \overline{\lambda}^Q \lVert \Gamma B \rVert_{\mathrm{op}}^2 + \overline{\lambda}^{\Bar{Q}} \lVert \Gamma B [\cdot](T) \rVert_{\mathrm{op}}^2 + \overline{\lambda}^R \lVert \Tilde{\Gamma} + \mathrm{id} \rVert_{\mathrm{op}}^2 \right) \lVert G' - G \rVert_{L^2([0,T])}^2,
        \end{split}
    \end{equation*}
     where $\overline{\lambda}^R, \overline{\lambda}^Q, \overline{\lambda}^{\Bar{Q}} > 0$ are such that $\overline{\lambda}^R I_d \succcurlyeq R(t)$, $\overline{\lambda}^Q I_d \succcurlyeq Q(t)$ for all $t \in [0,T]$ and $\overline{\lambda}^{\Bar{Q}} I_d \succcurlyeq \Bar{Q}$.
    Moreover, using the uniform bound from Proposition \ref{prop:BOUND_K}, there exists $L > 0$ such that
    \begin{equation}
        \label{eq:L_SMOOTHNESS_DEF}
        L \ge 4 \left( \overline{\lambda}^Q \lVert \Gamma B \rVert_{\mathrm{op}}^2 + \overline{\lambda}^{\Bar{Q}} \lVert \Gamma B [\cdot](T) \rVert_{\mathrm{op}}^2 + \overline{\lambda}^R \lVert \Tilde{\Gamma} + \mathrm{id} \rVert_{\mathrm{op}}^2 \right)
    \end{equation}
    for all  slope parameters $(K^{(\ell)})_{\ell \in \mathbb N}$ and graphon aggregates $(Z^\alpha)_{\alpha\in I}\in \mathcal C([0,T],\mathbb R^d)$. 
    Combining this upper bound $L$ with \eqref{eq:L_smooth_taylor} 
    leads to the desired   Lipschitz smoothness.
\end{proof}

The following convergence result is well known for a finite-dimensional optimization problem, however, for completeness, we give the main steps of the proof in $L^2([0,T], \mathbb{R}^k)$.

\begin{proof}[Proof of Theorem \ref{theorem:G_CONVERGENCE}]
    Let $\alpha \in I$. By Proposition \ref{proposition:LSmooth} and \eqref{eq:P2_graddescent}, we can show
    \begin{equation}
        \label{eq:gradbound_LSmooth}
        \lVert \nabla_G J_2^{\ell, \alpha}(G^{(i), \alpha}) \lVert_{L^2([0,T])}^2 \le \frac{1}{\frac{L}{2} \eta^2 - \eta} \left( J_2^{\ell, \alpha}(G^{(i), \alpha}) - J_2^{\ell, \alpha}(G^{*, \alpha}) \right),
    \end{equation}
    where $\eta < 2/L$.
    Moreover, note that the $m$-strong convexity of $G \mapsto J_2(G)$ is equivalent to
    \begin{equation}
        \label{eq:strong_convex_cond2}
        J_2^{\ell, \alpha}(G^{*, \ell, \alpha}) - J_2^{\ell, \alpha}(G^{(i), \alpha}) \ge \langle G^{*, \ell, \alpha} - G^{(i), \alpha}, \nabla_G J_2^{\ell, \alpha}(G^{(i), \alpha}) \rangle_{L^2} + \frac{m}{2} \lVert G^{*, \ell, \alpha} - G^{(i), \alpha} \rVert_{L^2([0,T])}.
    \end{equation}
    Then using \eqref{eq:P2_graddescent}, \eqref{eq:gradbound_LSmooth} and \eqref{eq:strong_convex_cond2} proves the first statement
    \begin{equation}
        \label{eq:decreasing_G}
        \begin{split}
            \lVert G^{(i+1), \alpha} - G^{*, \ell, \alpha} \rVert_{L^2([0,T])}^2 &\le (1 - \eta m) \lVert G^{(i), \alpha} - G^{*, \ell, \alpha} \rVert_{L^2([0,T])}^2 \\
            &\quad+ \left( \frac{\eta^2}{\frac{L}{2} \eta^2 - \eta} - 2 \eta \right) \left(J_2^{\ell, \alpha}(G^{(i), \alpha}) - J_2^{\ell, \alpha}(G^{*, \ell, \alpha}) \right) \\
            &\le (1 - \eta m) \lVert G^{(i), \alpha} - G^{*, \ell, \alpha} \rVert_{L^2([0,T])}^2,
        \end{split}
    \end{equation}
    where the last inequality holds for $\eta < 2/L$. 
    The last statement
    follows from 
      Fubini's theorem.
\end{proof}

\subsection{Convergence analysis to Nash equilibrium}
\label{subsection:convergence_analsis_to_NE}

We established linear convergence of the slope parameter $K$ to the optimal parameter $K^*$. The intercept $G$ converges linearly to the best-response $\argmin_{G^\alpha \in L^2([0,T], \mathbb{R}^k)} J_2(K, G^\alpha, Z^\alpha)$ for fixed $K$ and $Z^\alpha$ for every $\alpha \in I$. It remains to show the convergence of Algorithm \ref{algo} to the NE.

For the convergence analysis of the intercept, introduce for any $\mu \in \mathcal{C}([0,T], L^2(I, \mathbb{R}^d))$ the integral operator $\Lambda[\cdot, \mu]: L^2([0,T], L^2(I, \mathbb{R}^d)) \to L^2([0,T], L^2(I, \mathbb{R}^d))$ defined as
\begin{equation*}
    \begin{split}
        \Lambda^{\alpha}[\Psi, \mu](t) &\coloneqq 2 \Bar{Q} \Bar{H} \mathbb{W}[\mu_T](\alpha) - \int_t^T \Bigg[ \left(A^\top - P_s^* B R^{-1} B^\top \right) \Psi_s^\alpha + 2 (P_s^* \Bar{A} - Q H) \mathbb{W}[\mu_s](\alpha) \Bigg] \, ds
    \end{split}
\end{equation*}
for all $\alpha \in I$ and $t \in [0,T]$. Note that for $\mu = \mu^{(n)} \in \mathcal{C}([0,T], L^2(I, \mathbb{R}^d))$, i.e., the state mean fixed at the beginning of the $n$-th outer iteration, this integral operator corresponds to the integral equation of \eqref{eq:localoptimal_PSI_MU_J2} as $Z_t^{(n),\alpha} = \mathbb{W}[\mu_t^{(n)}](\alpha)$.

To analyze the convergence of the state mean, define for any $K \in L^2([0,T], \mathbb{R}^{k \times d})$ and $G \in L^2([0,T], L^2(I, \mathbb{R}^k))$ the integral operator $\Phi^{(n)}[\cdot, K, G]: \mathcal{C}([0,T], L^2(I, \mathbb{R}^d)) \to \mathcal{C}([0,T], L^2(I, \mathbb{R}^d))$,
\begin{equation*}
    \Phi^{(n), \alpha}[\mu, K, G](t) \coloneqq \mu_0^\alpha + \int_0^t \left[ (A + B K_s) \mu_s^\alpha + B G_s^\alpha + \Bar{A} Z_s^{(n), \alpha} \right] \, ds,
\end{equation*} 
and $\Phi[\cdot, K, G]: \mathcal{C}([0,T], L^2(I, \mathbb{R}^d)) \to \mathcal{C}([0,T], L^2(I, \mathbb{R}^d))$,
\begin{equation}
\label{eq:PHI_OPERATOR}
    \Phi^\alpha[\mu, K, G](t) \coloneqq \mu_0^\alpha + \int_0^t \left[ (A + B K_s) \mu_s^\alpha + B G_s^\alpha + \Bar{A} \mathbb{W}[\mu_s](\alpha) \right] \, ds
\end{equation}
for all $\alpha \in I$ and $t \in [0,T]$. These integral operators represent the integral equations of the state mean dynamics with fixed \eqref{eq:mean} and consistent \eqref{eq:mean_policy} graphon aggregate, respectively.

For the subsequent analysis, recall the following facts:
(i) $\lVert R^{-1} B^\top P^* \rVert_{L^\infty} = \lVert K^* \rVert_{L^\infty}$ by Proposition \ref{proposition:existenceANDuniqueness}; (ii) recall the uniform bound $C_0^K$ in Proposition \ref{prop:BOUND_K} for slope parameters obtained through the gradient descent scheme in \eqref{eq:P1_graddescent} and $\lVert K^* \rVert_{L^\infty} \le C_0^K$; (iii) under Assumption \ref{assumption:convergence}, $M_1 < 1$ and $T(\lVert A \rVert_{L^\infty} + \lVert B \rVert_{L^\infty} C_0^K) \le M_1$.

We start by providing a perturbation analysis for the operator $\Lambda$.

\subsubsection{Perturbation analysis for $\Lambda$}

The following lemma analyzes the perturbation of $\Lambda$ in $\Psi$.

\begin{lemma}
\label{lemma:PERTURB_PSI}
    Let $K^*$ be the optimal slope parameter defined in \eqref{eq:systemcontrol} and let $\mu \in \mathcal{C}([0,T], L^2(I, \mathbb{R}^d))$. Then for any $\Psi, \Psi' \in L^2([0,T], L^2(I, \mathbb{R}^d))$,
    \begin{equation*}
        \lVert \Lambda[\Psi, \mu] - \Lambda[\Psi', \mu] \rVert_{L_B^2} \le T(\lVert A \rVert_{L^\infty} + \lVert B \rVert_{L^\infty} \lVert K^* \rVert_{L^\infty}) \lVert \Psi - \Psi' \rVert_{L_B^2}.
    \end{equation*}
\end{lemma}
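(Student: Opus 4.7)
The plan is straightforward: subtract the two operator values and exploit that $\Lambda$ is affine in $\Psi$ so the $\mu$-dependent terms (both the terminal term $2\bar Q \bar H \mathbb{W}[\mu_T](\alpha)$ and the $(P^*_s \bar A - QH)\mathbb{W}[\mu_s](\alpha)$ contribution) cancel exactly. What remains is
\[
\Lambda^\alpha[\Psi,\mu](t) - \Lambda^\alpha[\Psi',\mu](t) = -\int_t^T \bigl(A^\top - P_s^* B R^{-1} B^\top\bigr)(\Psi_s^\alpha - \Psi_s'^{,\alpha})\, ds.
\]
The identity $\lVert R^{-1} B^\top P^* \rVert_{L^\infty} = \lVert K^* \rVert_{L^\infty}$ recorded immediately before the lemma then gives the pointwise operator bound
\[
\bigl\lvert A^\top - P_s^* B R^{-1} B^\top \bigr\rvert \le \lVert A \rVert_{L^\infty} + \lVert B \rVert_{L^\infty} \lVert K^* \rVert_{L^\infty} \eqqcolon C_*,
\]
uniformly in $s \in [0,T]$.

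Next I would promote this into the $L^2_B$ norm by integrating in three stages. Applying Cauchy--Schwarz in time to $\int_t^T |\Psi_s^\alpha - \Psi_s'^{,\alpha}|\, ds$ yields
\[
\bigl\lvert \Lambda^\alpha[\Psi,\mu](t) - \Lambda^\alpha[\Psi',\mu](t) \bigr\rvert^2 \le C_*^2 (T-t) \int_0^T \bigl\lvert \Psi_s^\alpha - \Psi_s'^{,\alpha}\bigr\rvert^2\, ds.
\]
Integrating over $\alpha \in I$ with Fubini's theorem gives
\[
\bigl\lVert \Lambda[\Psi,\mu](t) - \Lambda[\Psi',\mu](t) \bigr\rVert_{L^2(I)}^2 \le C_*^2 (T-t)\, \lVert \Psi - \Psi' \rVert_{L_B^2}^2,
\]
and a final integration in $t$ over $[0,T]$ produces a factor of $T^2/2$ on the right-hand side. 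Taking square roots and bounding $\sqrt{T^2/2} \le T$ yields the claimed estimate with constant $T C_* = T(\lVert A \rVert_{L^\infty} + \lVert B \rVert_{L^\infty} \lVert K^* \rVert_{L^\infty})$.

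There is no genuine obstacle here: the result is a direct Gronwall-free Lipschitz estimate for a Volterra-type backward integral operator, made possible because the kernel $A^\top - P^*_s B R^{-1} B^\top$ is bounded in $L^\infty$ and the dependence on $\Psi$ is linear. The only subtlety is using the cleanest form of Cauchy--Schwarz to pick up the factor $(T-t)$ (rather than a blunt $T$), which keeps the constant sharp enough to be compatible with the contraction condition \eqref{eq:M1} used later in the convergence proof of Theorem~\ref{theorem:CONVERGENCE_FINAL}.
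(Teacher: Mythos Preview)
Your proof is correct and follows essentially the same approach as the paper: cancel the $\mu$-dependent terms, bound the kernel $A^\top - P_s^* B R^{-1} B^\top$ by $\lVert A \rVert_{L^\infty} + \lVert B \rVert_{L^\infty}\lVert K^* \rVert_{L^\infty}$, apply Cauchy--Schwarz (the paper phrases it as Jensen) in $s$, then integrate over $\alpha$ and $t$ via Fubini. The only cosmetic difference is that the paper bounds $(T-t)$ by $T$ immediately, whereas you carry $(T-t)$ through to obtain $T^2/2$ before relaxing to $T^2$; the final constant is identical.
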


\begin{proof}
    By Jensen's inequality, for $t \in [0,T]$ and $\alpha \in I$,
    \begin{equation*}
        \begin{split}
            \lvert \Lambda^{\alpha}[\Psi, \mu](t) - \Lambda^{\alpha}[\Psi', \mu](t) \rvert^2 &= \left\lvert \int_t^T \left[ \left(A^\top - P_s^* B R^{-1} B^\top \right)(\Psi_s^\alpha - \Psi_s'^{,\alpha} ) \right] \, ds \right\rvert^2 \\
            &\le T (\lVert A \rVert_{L^\infty} + \lVert B \rVert_{L^\infty} \lVert K^* \rVert_{L^\infty})^2 \int_0^T \lvert \Psi_s^\alpha - \Psi_s'^{,\alpha} \rvert^2 \, ds.
        \end{split}
    \end{equation*}
    Hence, by Fubini's theorem,
    \begin{equation*}
        \begin{split}
            \lVert \Lambda[\Psi, \mu] - \Lambda[\Psi', \mu] \rVert_{L_B^2}^2 &\le T (\lVert A \rVert_{L^\infty} + \lVert B \rVert_{L^\infty} \lVert K^* \rVert_{L^\infty})^2 \int_0^T \int_0^1 \int_0^T \lvert \Psi_s^\alpha - \Psi_s'^{,\alpha} \rvert^2 \, ds \, d \alpha \, dt \\
            &= T^2 (\lVert A \rVert_{L^\infty} + \lVert B \rVert_{L^\infty} \lVert K^* \rVert_{L^\infty})^2 \lVert \Psi - \Psi' \rVert_{L_B^2}^2.
        \end{split}
    \end{equation*}
    Taking the square root completes the proof.
\end{proof}

The next result analyzes the error introduced through the fixed graphon aggregate at the beginning of the $n$-th outer iteration.

\begin{lemma}
\label{lemma:LAMBDA_nvsexact}
    Let $P^* \in \mathcal{C}([0,T], \mathbb{S}^d)$ satisfy \eqref{eq:PI_riccati},   $\Psi \in L^2([0,T], L^2(I, \mathbb{R}^d))$ and $\mu \in \mathcal{C}([0,T], L^2(I, \mathbb{R}^d))$. Then
    \begin{equation*}
        \begin{split}
            &\lVert \Lambda[\Psi, \mu^{(n)}] - \Lambda[\Psi, \mu] \rVert_{L_B^2}^2 \\
            &\quad\le 2 \sqrt{T} \left( \lvert \Bar{Q} \rvert \lvert \Bar{H} \rvert + T \left( \lVert P^* \rVert_{L^\infty} \lVert \Bar{A} \rVert_{L^\infty} + \lVert Q \rVert_{L^\infty} \lVert H \rVert_{L^\infty} \right) \right) \lVert W \rVert_{L^2(I^2)} \lVert \mu^{(n)} - \mu \rVert_{\mathcal{C}}.
        \end{split}
    \end{equation*}
\end{lemma}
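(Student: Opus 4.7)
The key observation is that the $\Psi$-dependent term in the definition of $\Lambda^\alpha$ cancels when we take the difference $\Lambda^\alpha[\Psi,\mu^{(n)}]-\Lambda^\alpha[\Psi,\mu]$, leaving only the affine-in-$\mu$ contributions. Specifically, for all $\alpha\in I$ and $t\in[0,T]$,
\begin{equation*}
\Lambda^\alpha[\Psi,\mu^{(n)}](t)-\Lambda^\alpha[\Psi,\mu](t)
= 2\bar Q\bar H\,\mathbb{W}[\mu_T^{(n)}-\mu_T](\alpha)
-2\int_t^T\bigl(P_s^*\bar A - QH\bigr)\mathbb{W}[\mu_s^{(n)}-\mu_s](\alpha)\,ds.
\end{equation*}
The plan is to bound the $L^2(I)$-norm in $\alpha$ of this expression pointwise in $t$, and then integrate in $t$ to obtain the $L_B^2$-norm.

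The first step is a triangle inequality followed by the observation that the graphon operator $\mathbb{W}$ is bounded from $L^2(I,\mathbb{R}^d)$ to itself with operator norm controlled by $\lVert W\rVert_{L^2(I^2)}$. Indeed, Cauchy--Schwarz gives $\lvert \mathbb{W}[f](\alpha)\rvert^2\le\bigl(\int_I W(\alpha,\beta)^2\,d\beta\bigr)\lVert f\rVert_{L^2(I)}^2$, so integrating in $\alpha$ yields $\lVert\mathbb{W}[f]\rVert_{L^2(I)}\le\lVert W\rVert_{L^2(I^2)}\lVert f\rVert_{L^2(I)}$. Applying this with $f=\mu_s^{(n)}-\mu_s$ (and with $s=T$ for the terminal term), and bounding $\lVert\mu_s^{(n)}-\mu_s\rVert_{L^2(I)}\le\lVert\mu^{(n)}-\mu\rVert_{\mathcal C}$ uniformly in $s$, gives
\begin{equation*}
\lVert\Lambda[\Psi,\mu^{(n)}](t)-\Lambda[\Psi,\mu](t)\rVert_{L^2(I)}
\le 2\lVert W\rVert_{L^2(I^2)}\Bigl(\lvert\bar Q\rvert\lvert\bar H\rvert + T(\lVert P^*\rVert_{L^\infty}\lVert\bar A\rVert_{L^\infty}+\lVert Q\rVert_{L^\infty}\lVert H\rVert_{L^\infty})\Bigr)\lVert\mu^{(n)}-\mu\rVert_{\mathcal C},
\end{equation*}
where the $T$ factor comes from bounding the inner integral $\int_t^T\cdots ds$ by its length.

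The final step is to integrate the square of the above pointwise bound over $t\in[0,T]$; since the right-hand side is independent of $t$, this yields an extra factor $\sqrt T$ after taking square roots (the squared LHS in the stated inequality appears to be a typographical slip, since the RHS is already linear in $\lVert\mu^{(n)}-\mu\rVert_{\mathcal C}$ and in $\lVert W\rVert_{L^2(I^2)}$, and a genuine quadratic bound would contain $T\lVert W\rVert_{L^2(I^2)}^2$ and $\lVert\mu^{(n)}-\mu\rVert_{\mathcal C}^2$). No obstacle of substance arises: the argument is entirely a matter of organising triangle inequality, Cauchy--Schwarz on the graphon kernel, and a sup-in-$s$ estimate; the only point requiring care is recognising the cancellation of the $\Psi$-dependent term so that the resulting bound is $\Psi$-independent.
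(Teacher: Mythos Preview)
Your proof is correct and follows essentially the same route as the paper: identify the cancellation of the $\Psi$-term, control the graphon operator by $\lVert W\rVert_{L^2(I^2)}$, bound uniformly in $s$ by $\lVert\mu^{(n)}-\mu\rVert_{\mathcal C}$, and integrate in $t$ to pick up the $\sqrt T$. The only cosmetic difference is that the paper expands the square $\lvert a+b\rvert^2$ and bounds the cross term separately before recombining into a perfect square, whereas you apply the triangle (Minkowski) inequality in $L^2(I)$ directly; your version is slightly more economical and yields the identical constant. You are also right that the exponent $2$ on the left-hand side of the statement is a typo: the paper's own proof ends by taking a square root, so the intended bound is on $\lVert\cdot\rVert_{L_B^2}$, not its square.
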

\begin{proof}
    First, observe that
    \begin{equation}
    \label{eq:norm_relations_L2C}
        \lVert \mathbb{W} \rVert_{\mathrm{op}} \le \lVert W \rVert_{L^2(I^2)}, \quad \text{and} \quad \sup_{t \in [0,T]} \lVert \mathbb{W}[\mu_t^{(n)} - \mu_t] \rVert_{L^2(I)} \le \lVert W \rVert_{L^2(I^2)} \lVert \mu^{(n)} - \mu \rVert_{\mathcal{C}}.
    \end{equation}
    Moreover, by the Cauchy-Schwarz inequality,
    \begin{equation}
        \label{eq:cross_term_bound}
        \begin{split}
            \int_0^1 \bigg[ \lvert \mathbb{W}[\mu_T^{(n)} - \mu_T](\alpha) \rvert &\int_0^T \lvert \mathbb{W}[\mu_s^{(n)} - \mu_s](\alpha) \rvert \, ds \bigg] \, d \alpha \\
            &\le T \int_0^1 \sup_{t \in [0,T]} \left( \int_0^1 \lvert W(\alpha, \beta) \rvert \lvert \mu_t^{(n), \beta} - \mu_t^\beta \rvert \, d \beta \right)^2 \, d \alpha \\
            &\le T \lVert W \rVert_{L^2(I^2)}^2 \lVert \mu^{(n)} - \mu \rVert_{\mathcal{C}}^2.
        \end{split}
    \end{equation}
    Then for $t \in [0,T]$ using Jensen's inequality and Fubini's theorem,    
    \begin{equation*}
        \begin{split}
            &\int_0^1 \left\lvert \Lambda^{\alpha}[\Psi, \mu^{(n)}](t) - \Lambda^\alpha[\Psi, \mu](t) \right\rvert^2 \, d \alpha \\
            &= \int_0^1 \left\lvert 2 \Bar{Q} \Bar{H} \mathbb{W}[\mu_T^{(n)} - \mu_T](\alpha) - \int_t^T \left[ 2 (P_s^* \Bar{A} - Q H) \mathbb{W}[\mu_s^{(n)} - \mu_s](\alpha) \right] \, ds  \right\rvert^2 \, d \alpha \\
            &\le 4 \Bigg( \lvert \Bar{Q} \rvert^2 \lvert \Bar{H} \rvert^2 \sup_{t \in [0,T]} \lVert \mathbb{W}[\mu_t^{(n)} - \mu_t] \rVert_{L^2(I)}^2 \\
            &\quad+ 2 \lvert \Bar{Q} \rvert \lvert \Bar{H} \rvert \left( \lVert P^* \rVert_{L^\infty} \lVert \Bar{A} \rVert_{L^\infty} + \lVert Q \rVert_{L^\infty} \lVert H \rVert_{L^\infty} \right) \int_0^1 \left[ \lvert \mathbb{W}[\mu_T^{(n)} - \mu_T](\alpha) \rvert \int_0^T \lvert \mathbb{W}[\mu_s^{(n)} - \mu_s](\alpha) \rvert \, ds \right] \, d \alpha \\
            &\quad+ T^2 \left( \lVert P^* \rVert_{L^\infty} \lVert \Bar{A} \rVert_{L^\infty} + \lVert Q \rVert_{L^\infty} \lVert H \rVert_{L^\infty} \right)^2 \sup_{t \in [0,T]} \lVert \mathbb{W}[\mu_t^{(n)} - \mu_t] \rVert_{L^2(I)}^2 \, ds \Bigg) \\
            &\le 4 \left( \lvert \Bar{Q} \rvert \lvert \Bar{H} \rvert + T \left( \lVert P^* \rVert_{L^\infty} \lVert \Bar{A} \rVert_{L^\infty} + \lVert Q \rVert_{L^\infty} \lVert H \rVert_{L^\infty} \right) \right)^2 \lVert W \rVert_{L^2(I^2)}^2 \lVert \mu^{(n)} - \mu \rVert_{\mathcal{C}}^2,
        \end{split}
    \end{equation*}
    where the last inequality holds due to \eqref{eq:norm_relations_L2C} and \eqref{eq:cross_term_bound}.
    Hence, integrating with respect to $t$ yields the desired inequality,
    \begin{equation*}
        \begin{split}
           & \lVert \Lambda[\Psi, \mu^{(n)}] - \Lambda[\Psi, \mu] \rVert_{L_B^2}^2 \\
            &\quad\le 4 T \left( \lvert \Bar{Q} \rvert \lvert \Bar{H} \rvert + T \left( \lVert P^* \rVert_{L^\infty} \lVert \Bar{A} \rVert_{L^\infty} + \lVert Q \rVert_{L^\infty} \lVert H \rVert_{L^\infty} \right) \right)^2 \lVert W \rVert_{L^2(I^2)}^2 \lVert \mu^{(n)} - \mu \rVert_{\mathcal{C}}^2.
        \end{split}
    \end{equation*}
    Taking the square root completes the proof.
\end{proof}

\subsubsection{Error bounds for $\Psi$}

Let $\Psi^{(n+1),*, \alpha}$ satisfy \eqref{eq:localoptimal_PSI_MU_J2} for the graphon aggregate $Z^{(n), \alpha}$ fixed at the beginning of the $n$-th outer iteration for all $\alpha \in I$. In other words, $\Psi^{(n+1),*} = \Lambda[\Psi^{(n+1),*}, \mu^{(n)}]$. Additionally, define for all $\alpha \in I$ and $t \in [0,T]$,
\begin{equation}
\label{eq:ALLGs}
    \begin{split}
        &G_t^{(n+1),*, \alpha} = (K_t^* - K_t^{(n+1)}) \Hat{\mu}_t^{(n+1), *, \alpha} -\frac{1}{2} R^{-1} B^\top \Psi_t^{(n+1),*,\alpha} \text{,} \\
        &\Bar{G}_t^{(n+1),*, \alpha} = -\frac{1}{2} R^{-1} B^\top \Psi_t^{(n+1),*,\alpha},\quad \text{and} \quad G_t^{*, \alpha} = -\frac{1}{2} R^{-1} B^\top \Psi_t^{*,\alpha},
    \end{split}
\end{equation}
where $\Hat{\mu}^{(n+1), *} = \Phi^{(n)}[\Hat{\mu}^{(n+1), *}, K^{(n+1)}, G^{(n+1),*}]$. Hence, $\Hat{\mu}^{(n+1),*, \alpha}$ satisfies \eqref{eq:mean} with policy parameters $(K^{(n+1)}, G^{(n+1),*, \alpha})$ and fixed graphon aggregate $Z^{(n), \alpha}$ for all $\alpha \in I$. Moreover, $\Psi^{*} = \Lambda[\Psi^{*}, \mu^{*}]$ and $\mu^* = \Phi[\mu^*, K^*, G^*]$. 

We highlight the following observations: (i) $G^{(n+1),*}$ is the optimal intercept parameter for the optimization problem solved using the gradient descent scheme \eqref{eq:P2_graddescent} for fixed slope parameter $K^{(n+1)}$ and graphon aggregate $Z^{(n)}$; (ii) $\Bar{G}^{(n+1),*}$ is the optimal intercept parameter for the optimization problem, where the slope parameter is already optimal, but the graphon aggregate is not the NE graphon aggregate; (iii) $G^*$ is the NE intercept parameter.

\begin{lemma}
\label{lemma:BOUND_PSILOCALTOOPTIMAL}
Suppose Assumption \ref{assumption:convergence} holds. Let $n \in \mathbb{N}_0$. Let $(K^*,G^*)$ be the optimal policy parameters defined in \eqref{eq:systemcontrol}, let $P^* \in \mathcal{C}([0,T], \mathbb{S}^d)$ satisfy \eqref{eq:PI_riccati} and  $\mu^* = \Phi[\mu^*, K^*, G^*]$. Then for $\Psi^{(n+1),*} = \Lambda[\Psi^{(n+1),*}, \mu^{(n)}]$ and $\Psi^{*} = \Lambda[\Psi^{*}, \mu^{*}]$, 
    \begin{equation*}
        \begin{split}
            \lVert \Psi^{(n+1),*} &- \Psi^* \rVert_{L_B^2} \le \frac{2 \sqrt{T} \left( \lvert \Bar{Q} \rvert \lvert \Bar{H} \rvert + T \left( \lVert P^* \rVert_{L^\infty} \lVert \Bar{A} \rVert_{L^\infty} + \lVert Q \rVert_{L^\infty} \lVert H \rVert_{L^\infty} \right) \right) \lVert W \rVert_{L^2(I^2)}}{1 - T(\lVert A \rVert_{L^\infty} + \lVert B \rVert_{L^\infty} \lVert K^* \rVert_{L^\infty})} \lVert \mu^{(n)} - \mu^* \rVert_{\mathcal{C}}.
        \end{split}
    \end{equation*}
\end{lemma}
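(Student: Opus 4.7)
The plan is a standard fixed-point perturbation argument that exploits the two perturbation lemmas already proved, combined with the contraction-type bound implicit in Assumption \ref{assumption:convergence}.

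First, I would use the fixed-point characterisations
$\Psi^{(n+1),*} = \Lambda[\Psi^{(n+1),*}, \mu^{(n)}]$ and $\Psi^{*} = \Lambda[\Psi^{*}, \mu^{*}]$
to write
\begin{equation*}
\Psi^{(n+1),*} - \Psi^* = \Bigl(\Lambda[\Psi^{(n+1),*}, \mu^{(n)}] - \Lambda[\Psi^{*}, \mu^{(n)}]\Bigr) + \Bigl(\Lambda[\Psi^{*}, \mu^{(n)}] - \Lambda[\Psi^{*}, \mu^{*}]\Bigr),
\end{equation*}
and then take $\lVert\cdot\rVert_{L^2_B}$ on both sides, applying the triangle inequality. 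The first bracket is controlled by Lemma \ref{lemma:PERTURB_PSI} (perturbation in the first argument of $\Lambda$), which yields the multiplicative factor $T(\lVert A \rVert_{L^\infty} + \lVert B \rVert_{L^\infty} \lVert K^* \rVert_{L^\infty})$ in front of $\lVert \Psi^{(n+1),*} - \Psi^* \rVert_{L^2_B}$. The second bracket is handled by Lemma \ref{lemma:LAMBDA_nvsexact} (perturbation in the second argument of $\Lambda$), producing exactly the numerator appearing in the claimed bound times $\lVert \mu^{(n)} - \mu^* \rVert_{\mathcal{C}}$.

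Next, I would rearrange the resulting inequality to solve for $\lVert \Psi^{(n+1),*} - \Psi^* \rVert_{L^2_B}$. To justify dividing by $1 - T(\lVert A \rVert_{L^\infty} + \lVert B \rVert_{L^\infty} \lVert K^* \rVert_{L^\infty})$, I would invoke Proposition \ref{prop:BOUND_K} (which gives $\lVert K^* \rVert_{L^\infty} \le C_0^K$) together with Assumption \ref{assumption:convergence}, yielding
\begin{equation*}
T(\lVert A \rVert_{L^\infty} + \lVert B \rVert_{L^\infty} \lVert K^* \rVert_{L^\infty}) \le M_1 - T\lVert \Bar{A} \rVert_{L^\infty} \lVert W \rVert_{L^2(I^2)} < 1,
\end{equation*}
so the denominator is strictly positive and the rearrangement is valid. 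Dividing through delivers the desired estimate.

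This argument is essentially a direct assembly of earlier lemmas, so there is no substantial obstacle; the only point requiring care is verifying the strict positivity of the denominator, which is why the slightly stronger form of the contraction condition in Assumption \ref{assumption:convergence} (rather than \eqref{eq:contraction_cond}) is convenient. I do not expect any hidden subtlety beyond carefully tracking the dependence of the constants on $K^*$ versus $C_0^K$.
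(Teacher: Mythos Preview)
Your proposal is correct and follows essentially the same approach as the paper: split the difference via the fixed-point identities, apply the triangle inequality, bound the two pieces by Lemmas \ref{lemma:PERTURB_PSI} and \ref{lemma:LAMBDA_nvsexact}, and rearrange using Assumption \ref{assumption:convergence}. Your added remark on why the denominator is strictly positive (via $\lVert K^*\rVert_{L^\infty}\le C_0^K$ and $M_1<1$) is a welcome clarification but not a departure from the paper's argument.
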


\begin{proof}
    Using the triangle inequality,
    \begin{equation*}
        \begin{split}
           & \lVert \Psi^{(n+1),*} - \Psi^* \rVert_{L_B^2} \\
            & \quad = \lVert \Lambda[\Psi^{(n+1),*}, \mu^{(n)}] - \Lambda[\Psi^*,\mu^*] \rVert_{L_B^2} \\
            &\quad \le \lVert \Lambda[\Psi^{(n+1),*}, \mu^{(n)}] - \Lambda[\Psi^*, \mu^{(n)}] \rVert_{L_B^2} + \lVert \Lambda[\Psi^*, \mu^{(n)}] - \Lambda[\Psi^*, \mu^*] \rVert_{L_B^2} \\
            &\quad \le T(\lVert A \rVert_{L^\infty} + \lVert B \rVert_{L^\infty} \lVert K^* \rVert_{L^\infty}) \lVert \Psi^{(n+1),*} - \Psi^* \rVert_{L_B^2} \\
            &\qquad+ 2 \sqrt{T} \left( \lvert \Bar{Q} \rvert \lvert \Bar{H} \rvert + T \left( \lVert P^* \rVert_{L^\infty} \lVert \Bar{A} \rVert_{L^\infty} + \lVert Q \rVert_{L^\infty} \lVert H \rVert_{L^\infty} \right) \right) \lVert W \rVert_{L^2(I^2)} \lVert \mu^{(n)} - \mu^* \rVert_{\mathcal{C}},
        \end{split}
    \end{equation*}
    where the last inequality holds because of Lemmas \ref{lemma:PERTURB_PSI} and \ref{lemma:LAMBDA_nvsexact}. Re-arranging the inequality under Assumption \ref{assumption:convergence} completes the proof. 
\end{proof}

\subsubsection{Error bounds for $G$}

The next two results analyze the error bounds which are necessary to establish the convergence for the intercept parameter utilizing the result for $\Psi$ above. 

\begin{lemma}
\label{lemma:BOUND_GUPDATETOLOCAL}
Let $K^*$ be the optimal slope parameter defined in \eqref{eq:systemcontrol}. For $n \in \mathbb{N}_0$ let $K^{(n+1)}$ be obtained through the gradient descent scheme in \eqref{eq:P1_graddescent}. Let $\Hat{\mu}^{(n+1), *} = \Phi^{(n)}[\Hat{\mu}^{(n+1), *}, K^{(n+1)}, G^{(n+1),*}]$. Then for $G^{(n+1),*}$ and $\Bar{G}^{(n+1),*}$ defined in \eqref{eq:ALLGs},
    \begin{equation*}
        \lVert G^{(n+1),*} - \Bar{G}^{(n+1),*} \rVert_{L_B^2} \le \lVert \Hat{\mu}^{(n+1),*} \rVert_{\mathcal{C}} \lVert K^{(n+1)} - K^* \rVert_{L^2([0,T])}.
    \end{equation*}
\end{lemma}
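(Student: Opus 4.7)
The plan is direct and reduces to a straightforward norm estimate after recognising a cancellation in the definitions.

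First, I would exploit the explicit forms in \eqref{eq:ALLGs}. Subtracting gives the identity
\begin{equation*}
    G_t^{(n+1),*,\alpha} - \bar{G}_t^{(n+1),*,\alpha} = (K_t^* - K_t^{(n+1)})\,\hat{\mu}_t^{(n+1),*,\alpha},
\end{equation*}
for a.e.\ $t\in[0,T]$ and all $\alpha\in I$, since the $-\tfrac12 R^{-1} B^\top \Psi^{(n+1),*,\alpha}$ terms cancel. This is the key observation, and it turns the lemma into a purely algebraic/analytic estimate on the product of a time-dependent matrix and the population state mean.

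Next, I would apply the definition of the Bochner norm $\lVert\cdot\rVert_{L_B^2}$, then use the submultiplicativity $|M x|\le \lVert M\rVert_2\, |x|$ and the inequality $\lVert M\rVert_2\le |M|$ for the spectral versus Frobenius norm of $K_t^*-K_t^{(n+1)}$, obtaining
\begin{equation*}
    \lVert G^{(n+1),*} - \bar{G}^{(n+1),*}\rVert_{L_B^2}^2
    = \int_0^T\! \int_I \bigl|(K_t^*-K_t^{(n+1)})\,\hat{\mu}_t^{(n+1),*,\alpha}\bigr|^2 \,d\alpha\,dt
    \le \int_0^T |K_t^*-K_t^{(n+1)}|^2 \,\lVert \hat{\mu}_t^{(n+1),*}\rVert_{L^2(I)}^2 \,dt.
\end{equation*}
Finally, pulling out the sup in $t$ of $\lVert\hat{\mu}_t^{(n+1),*}\rVert_{L^2(I)}$, which is exactly $\lVert\hat{\mu}^{(n+1),*}\rVert_{\mathcal{C}}$, from the time integral yields
\begin{equation*}
    \lVert G^{(n+1),*} - \bar{G}^{(n+1),*}\rVert_{L_B^2}^2
    \le \lVert \hat{\mu}^{(n+1),*}\rVert_{\mathcal{C}}^2 \,\lVert K^{(n+1)}-K^*\rVert_{L^2([0,T])}^2.
\end{equation*}
Taking square roots gives the claimed bound. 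There is no real obstacle here; the only subtlety is noticing the cancellation of the $\Psi^{(n+1),*,\alpha}$ terms, after which the remaining inequality is a one-line application of the definitions of $\lVert\cdot\rVert_{L_B^2}$ and $\lVert\cdot\rVert_{\mathcal{C}}$. Note that well-definedness of $\lVert\hat{\mu}^{(n+1),*}\rVert_{\mathcal{C}}$ is guaranteed because $\hat{\mu}^{(n+1),*}\in\mathcal{C}([0,T],L^2(I,\mathbb{R}^d))$ by construction as the fixed point of $\Phi^{(n)}[\,\cdot\,,K^{(n+1)},G^{(n+1),*}]$.
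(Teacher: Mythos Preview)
Your proof is correct and follows essentially the same approach as the paper's own proof: both subtract the definitions in \eqref{eq:ALLGs} to obtain the cancellation, expand the $L_B^2$-norm, and then pull out $\sup_{t}\lVert \hat{\mu}_t^{(n+1),*}\rVert_{L^2(I)}=\lVert \hat{\mu}^{(n+1),*}\rVert_{\mathcal{C}}$ from the time integral. If anything, your version is slightly more careful in explicitly invoking $\lVert M\rVert_2\le |M|$ when passing from $|(K_t^*-K_t^{(n+1)})\hat{\mu}_t^{(n+1),*,\alpha}|$ to $|K_t^*-K_t^{(n+1)}|\,|\hat{\mu}_t^{(n+1),*,\alpha}|$, whereas the paper writes this step as an equality.
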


\begin{proof}
    The definition of $G^{(n+1),*}$ and $\Bar{G}^{(n+1),*}$ in \eqref{eq:ALLGs} immediately implies
    \begin{equation*}
        \begin{split}
            \lVert G^{(n+1),*} - \Bar{G}^{(n+1),*} \rVert_{L_B^2}^2 &= \int_0^T \int_0^1 \lvert K_t^* - K_t^{(n+1)} \rvert^2 \lvert \Hat{\mu}_t^{(n+1),*} \rvert^2 \, d\alpha \, dt \\
            &\le \lVert K^{(n+1)} - K^* \rVert_{L^2([0,T])}^2 \sup\limits_{t \in [0,T]}{\lVert \Hat{\mu}_t^{(n+1),*} \rVert_{L^2(I)}^2 }.
        \end{split}
    \end{equation*}
    Taking the square root completes the proof. 
\end{proof}

The following lemma immediately follows from \eqref{eq:ALLGs} and Lemma \ref{lemma:BOUND_PSILOCALTOOPTIMAL}.

\begin{lemma}
\label{lemma:BOUND_GLOCALTOPOPTIMAL}
    Suppose Assumption \ref{assumption:convergence} holds. Let $(K^*, G^*)$ be the optimal policy parameters defined in \eqref{eq:systemcontrol}, let $P^* \in \mathcal{C}([0,T], \mathbb{S}^d)$ satisfy \eqref{eq:PI_riccati} and let $\mu^* = \Phi[\mu^*, K^*, G^*]$. Then for $\Bar{G}^{(n+1),*}$ defined in \eqref{eq:ALLGs} the following holds
    \begin{equation*}
        \begin{split}
           & \lVert \Bar{G}^{(n+1),*} - G^* \rVert_{L_B^2} \\
            &\quad \le \frac{\lVert B \rVert_{L^\infty} \sqrt{T} \left( \lvert \Bar{Q} \rvert \lvert \Bar{H} \rvert + T \left( \lVert P^* \rVert_{L^\infty} \lVert \Bar{A} \rVert_{L^\infty} + \lVert Q \rVert_{L^\infty} \lVert H \rVert_{L^\infty} \right) \right) \lVert W \rVert_{L^2(I^2)}}{\underline{\lambda}^R (1 - T(\lVert A \rVert_{L^\infty} + \lVert B \rVert_{L^\infty} \lVert K^* \rVert_{L^\infty}))} \lVert \mu^{(n)} - \mu^* \rVert_{\mathcal{C}}.
        \end{split}
    \end{equation*}
\end{lemma}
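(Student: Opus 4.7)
The plan is to combine the explicit algebraic expressions given in \eqref{eq:ALLGs} with the already-established bound in Lemma \ref{lemma:BOUND_PSILOCALTOOPTIMAL}. From \eqref{eq:ALLGs}, both $\Bar{G}^{(n+1),*}$ and $G^*$ are affine images of the adjoint functions $\Psi^{(n+1),*}$ and $\Psi^*$, respectively, under the same linear map $-\tfrac{1}{2} R^{-1}B^\top \,\cdot$. This makes the difference $\Bar{G}^{(n+1),*} - G^*$ equal to $-\tfrac{1}{2} R^{-1} B^\top(\Psi^{(n+1),*} - \Psi^*)$ pointwise in $(t,\alpha)$, so no cross terms involving $K^{(n+1)}$ appear (in contrast to $G^{(n+1),*}$, which would introduce the state-mean contribution handled in Lemma \ref{lemma:BOUND_GUPDATETOLOCAL}).

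Concretely, first I would write
\begin{equation*}
\Bar{G}^{(n+1),*, \alpha}_t - G^{*, \alpha}_t = -\tfrac{1}{2} R(t)^{-1} B(t)^\top \bigl(\Psi_t^{(n+1),*, \alpha} - \Psi_t^{*, \alpha}\bigr),
\end{equation*}
for a.e.\ $t \in [0,T]$ and $\alpha \in I$. Then I would take the squared pointwise norm, use the spectral bound $\lVert R(t)^{-1} \rVert_2 \le 1/\underline{\lambda}^R$ together with $\lVert B(t) \rVert_2 \le \lVert B \rVert_{L^\infty}$, and integrate over $(t,\alpha) \in [0,T] \times I$ to obtain
\begin{equation*}
\lVert \Bar{G}^{(n+1),*} - G^* \rVert_{L_B^2} \le \frac{\lVert B \rVert_{L^\infty}}{2\,\underline{\lambda}^R} \lVert \Psi^{(n+1),*} - \Psi^* \rVert_{L_B^2}.
\end{equation*}

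Finally, I would insert the estimate from Lemma \ref{lemma:BOUND_PSILOCALTOOPTIMAL}, which under Assumption \ref{assumption:convergence} bounds $\lVert \Psi^{(n+1),*} - \Psi^* \rVert_{L_B^2}$ by $\lVert \mu^{(n)} - \mu^* \rVert_{\mathcal{C}}$ up to a factor of $2\sqrt{T}(\lvert \Bar{Q}\rvert\lvert\Bar{H}\rvert + T(\lVert P^*\rVert_{L^\infty}\lVert\Bar{A}\rVert_{L^\infty} + \lVert Q\rVert_{L^\infty}\lVert H\rVert_{L^\infty}))\lVert W\rVert_{L^2(I^2)}$ divided by $1 - T(\lVert A\rVert_{L^\infty} + \lVert B\rVert_{L^\infty}\lVert K^*\rVert_{L^\infty})$. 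Multiplying the two bounds and cancelling the factor of $2$ yields exactly the stated inequality. Since every step is either a direct substitution or a standard operator-norm estimate, there is no genuine obstacle here; the only point to be careful about is to use the lower eigenvalue bound $\underline{\lambda}^R$ on $R$ rather than any iteration-dependent quantity, so that the resulting constant is uniform in $n$ as required in the outer-loop analysis.
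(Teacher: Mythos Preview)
Your proposal is correct and follows exactly the approach implicit in the paper, which simply states that the lemma ``immediately follows from \eqref{eq:ALLGs} and Lemma \ref{lemma:BOUND_PSILOCALTOOPTIMAL}.'' You have merely written out the two steps (the pointwise operator-norm bound on $-\tfrac{1}{2}R^{-1}B^\top$ and the substitution of Lemma \ref{lemma:BOUND_PSILOCALTOOPTIMAL}) that the paper leaves to the reader.
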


\subsubsection{Perturbation analysis for $\Phi$}

The following lemma considers the perturbation of $\Phi$ in $\mu$.

\begin{lemma}
\label{lemma:FPI_AGGREGATE}
    Let $K \in L^\infty([0,T], \mathbb{R}^{k \times d})$ and $G \in L^2([0,T], L^2([0,T], \mathbb{R}^k))$. Then for $\mu, \mu' \in  \mathcal{C}([0,T], L^2(I, \mathbb{R}^d))$ with $\mu_0^\alpha = \mu_0'^{,\alpha}$ for all $\alpha \in I$,
    \begin{equation*}
        \lVert \Phi[\mu', K, G] - \Phi[\mu, K, G] \rVert_{\mathcal{C}} \le T(\lVert A \rVert_{L^\infty} + \lVert B \rVert_{L^\infty} \lVert K \rVert_{L^\infty} + \lVert \Bar{A} \rVert_{L^\infty} \lVert W \rVert_{L^2(I^2)}) \lVert \mu' - \mu \rVert_{\mathcal{C}}.
    \end{equation*}
\end{lemma}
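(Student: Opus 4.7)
The plan is to directly exploit the linearity of $\Phi[\cdot,K,G]$ in $\mu$ together with the bound $\lVert\mathbb{W}\rVert_{\mathrm{op}}\le\lVert W\rVert_{L^2(I^2)}$. Since $\mu_0^\alpha=\mu_0'^{,\alpha}$ and the term $BG^\alpha_s$ does not depend on $\mu$, subtracting the two integral equations gives, for every $\alpha\in I$ and $t\in[0,T]$,
\begin{equation*}
\Phi^\alpha[\mu',K,G](t) - \Phi^\alpha[\mu,K,G](t) = \int_0^t \Bigl[(A(s)+B(s)K_s)(\mu_s'^{,\alpha}-\mu_s^\alpha) + \bar{A}(s)\,\mathbb{W}[\mu_s'-\mu_s](\alpha)\Bigr] ds.
\end{equation*}

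Next I would take the $L^2(I)$ norm in $\alpha$ and apply Minkowski's integral inequality to pull the norm inside the $ds$-integral, yielding a bound by $\int_0^t\lVert(A+BK_s)(\mu_s'-\mu_s)\rVert_{L^2(I)}\,ds + \int_0^t\lVert\bar{A}\,\mathbb{W}[\mu_s'-\mu_s]\rVert_{L^2(I)}\,ds$. The first integrand is controlled pointwise in $s$ by $(\lVert A\rVert_{L^\infty}+\lVert B\rVert_{L^\infty}\lVert K\rVert_{L^\infty})\lVert\mu_s'-\mu_s\rVert_{L^2(I)}$, while for the second integrand I would invoke the bound $\lVert\mathbb{W}\rVert_{\mathrm{op}}\le\lVert W\rVert_{L^2(I^2)}$ (a direct Cauchy–Schwarz estimate on the kernel definition \eqref{eq:graphon_operator}, also used in \eqref{eq:norm_relations_L2C}) to obtain $\lVert\bar{A}\,\mathbb{W}[\mu_s'-\mu_s]\rVert_{L^2(I)}\le\lVert\bar{A}\rVert_{L^\infty}\lVert W\rVert_{L^2(I^2)}\lVert\mu_s'-\mu_s\rVert_{L^2(I)}$.

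Combining these yields
\begin{equation*}
\lVert\Phi[\mu',K,G](t)-\Phi[\mu,K,G](t)\rVert_{L^2(I)} \le C\int_0^t \lVert\mu_s'-\mu_s\rVert_{L^2(I)}\,ds,
\end{equation*}
with $C\coloneqq\lVert A\rVert_{L^\infty}+\lVert B\rVert_{L^\infty}\lVert K\rVert_{L^\infty}+\lVert\bar{A}\rVert_{L^\infty}\lVert W\rVert_{L^2(I^2)}$. Bounding $\int_0^t\lVert\mu_s'-\mu_s\rVert_{L^2(I)}\,ds\le T\lVert\mu'-\mu\rVert_{\mathcal{C}}$ and taking the supremum over $t\in[0,T]$ gives the claim.

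There is no real obstacle here; the only slightly delicate point is ensuring that the graphon term contributes $\lVert W\rVert_{L^2(I^2)}$ (rather than an $L^\infty$ norm that need not exist, since we allow $L^2$ graphons), which is precisely why measuring the perturbation in the $\mathcal{C}([0,T],L^2(I,\mathbb{R}^d))$ norm is the natural choice and why $\mathbb{W}$ needs to be estimated as a bounded operator on $L^2(I,\mathbb{R}^d)$ via Cauchy–Schwarz rather than pointwise in $\alpha$.
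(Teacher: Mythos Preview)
Your proof is correct and reaches the same conclusion with the same constant, but the route differs slightly from the paper's. The paper squares first: it applies Jensen's inequality to bound $\lvert\int_0^t(\cdots)\,ds\rvert^2\le T\int_0^T\lvert\cdots\rvert^2\,ds$, then expands the square of the integrand into three terms (the $(A+BK)$ part, the $\bar{A}\mathbb{W}$ part, and a cross term), and must handle the cross term by a separate Cauchy--Schwarz estimate before recombining into a perfect square. You instead take the $L^2(I)$ norm first and apply Minkowski's integral inequality to pull the norm inside the time integral, which lets you use the triangle inequality and the operator bound $\lVert\mathbb{W}\rVert_{\mathrm{op}}\le\lVert W\rVert_{L^2(I^2)}$ directly on each summand without any cross term appearing. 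Your argument is more streamlined; the paper's version is a bit heavier but makes the dependence on each coefficient explicit at the level of squared norms, which is stylistically consistent with the other perturbation lemmas in Section~\ref{subsection:convergence_analsis_to_NE}.
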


\begin{proof}
    First, note that by Fubini's theorem, the Cauchy-Schwarz inequality and \eqref{eq:norm_relations_L2C},
    \begin{equation*}
        \begin{split}
          &  \int_0^1 \int_0^T \lvert \mu_s'^{, \alpha} - \mu_s^\alpha \rvert  \lvert \mathbb{W} [\mu_s' - \mu_s](\alpha) \rvert \, ds \, d \alpha \le \int_0^T \left[ \lVert \mu_s' - \mu_s \rVert_{L^2(I)}^2 \lVert \mathbb{W}[\mu_s' - \mu_s] \rVert_{L^2(I)}^2 \right]^{1/2} \, ds \\
            &\quad \le \lVert W \rVert_{L^2(I^2)} \int_0^T \lVert \mu_s' - \mu_s \rVert_{L^2(I)}^2 \, ds  \le T \lVert W \rVert_{L^2(I^2)} \lVert \mu' - \mu \rVert_{\mathcal{C}}^2.
        \end{split}
    \end{equation*}
    Then for any $t \in [0,T]$,  using Jensen's inequality, Fubini's theorem and \eqref{eq:norm_relations_L2C} shows that 
    \begin{equation*}
        \begin{split}
         &    \int_0^1 \big\lvert \Phi^\alpha[\mu', K, G](t) - \Phi^\alpha[\mu, K, G](t) \big\rvert^2 \, d \alpha \\
            & \quad = \int_0^1 \left\lvert \int_0^t \left[ (A + B K_s) ( \mu_s'^{, \alpha} - \mu_s^\alpha ) + \Bar{A} \mathbb{W}[\mu_s' - \mu_s](\alpha) \, \right] \, ds \right\rvert^2 \, d \alpha \\
            &\quad \le T \int_0^1 \int_0^T \left\vert ( A  +  B  K_s) ( \mu_s'^{, \alpha} - \mu_s^\alpha ) + \Bar{A} \mathbb{W}[\mu_s' - \mu_s](\alpha) \right\rvert^2 \, ds \, d \alpha \\
            &\quad \le T \Bigg( (\lVert A \rVert_{L^\infty} + \lVert B \rVert_{L^\infty} \lVert K \rVert_{L^\infty} )^2 T \lVert \mu' - \mu \rVert_{\mathcal{C}}^2 \\
            &\qquad+ 2 (\lVert A \rVert_{L^\infty} + \lVert B \rVert_{L^\infty} \lVert K \rVert_{L^\infty}) \lVert \Bar{A} \rVert_{L^\infty} \int_0^1 \int_0^T \lvert \mu_s'^{, \alpha} - \mu_s^\alpha \rvert  \lvert \mathbb{W} [\mu_s' - \mu_s](\alpha) \rvert \, ds \, d \alpha \\
            &\qquad+ \lVert \Bar{A} \rVert_{L^\infty}^2 T \sup_{t \in [0,T]} \lVert \mathbb{W}[\mu_t' - \mu_t] \rVert_{L^2(I)}^2 \Bigg) \\
            &\quad \le T^2 \left( \lVert A \rVert_{L^\infty} + \lVert B \rVert_{L^\infty} \lVert K \rVert_{L^\infty} + \lVert \Bar{A} \rVert_{L^\infty} \lVert W \rVert_{L^2(I^2)} \right)^2 \lVert \mu' - \mu \rVert_{\mathcal{C}}^2.
        \end{split}
    \end{equation*}
    Taking the supremum with respect to $t$ yields 
    \begin{equation*}
        \begin{split}
            \lVert \Phi[\mu', K, G] - \Phi[\mu, K, G] \rVert_{\mathcal{C}}^2 &\le T^2 (\lVert A \rVert_{L^\infty} + \lVert B \rVert_{L^\infty} \lVert K \rVert_{L^\infty} + \lVert \Bar{A} \rVert_{L^\infty} \lVert W \rVert_{L^2(I^2)})^2 \lVert \mu' - \mu \rVert_{\mathcal{C}}^2.
        \end{split}
    \end{equation*}
    Taking the square root completes the proof.
\end{proof}

The following lemma considers the perturbation of $\Phi$ in $K$.

\begin{lemma}
\label{lemma:MU_PERTURB_K}
    Let $\mu \in  \mathcal{C}([0,T], L^2(I, \mathbb{R}^d))$ and $G \in  L^2([0,T], L^2(I, \mathbb{R}^k))$. Then for any $K, K' \in L^2([0,T], \mathbb{R}^{k \times d})$,
    \begin{equation*}
        \lVert \Phi[\mu, K, G] - \Phi[\mu, K', G] \rVert_{\mathcal{C}} \le \sqrt{T} \lVert B \rVert_{L^\infty} \lVert \mu \rVert_{\mathcal{C}} \lVert K - K' \rVert_{L^2([0,T])}.
    \end{equation*}
\end{lemma}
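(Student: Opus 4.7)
The plan is to exploit the linearity of $\Phi$ in the slope parameter: since both $\mu$ and $G$ are held fixed, the difference $\Phi^\alpha[\mu,K,G](t) - \Phi^\alpha[\mu,K',G](t)$ collapses to a single term involving only $B(K - K')\mu^\alpha$, and a direct application of Cauchy--Schwarz in time will produce the claimed $\sqrt{T}$ factor.

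More precisely, I would first observe from the definition of $\Phi$ in \eqref{eq:PHI_OPERATOR} that the initial condition $\mu_0^\alpha$, the intercept term $B G_s^\alpha$, the drift term $A \mu_s^\alpha$, and the graphon aggregate term $\Bar{A}\mathbb{W}[\mu_s](\alpha)$ are all independent of $K$, so they cancel in the difference, leaving
\begin{equation*}
    \Phi^\alpha[\mu, K, G](t) - \Phi^\alpha[\mu, K', G](t) = \int_0^t B(K_s - K_s')\mu_s^\alpha \, ds.
\end{equation*}

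Next I would estimate this pointwise. Using the submultiplicative property of the matrix norm together with Cauchy--Schwarz in the time variable,
\begin{equation*}
    \left|\int_0^t B(K_s - K_s')\mu_s^\alpha \, ds\right|^2 \le \lVert B \rVert_{L^\infty}^2 \lVert K - K' \rVert_{L^2([0,T])}^2 \int_0^t |\mu_s^\alpha|^2 \, ds.
\end{equation*}
Integrating in $\alpha \in I$ and applying Fubini's theorem, the remaining factor becomes
\begin{equation*}
    \int_0^1 \int_0^t |\mu_s^\alpha|^2 \, ds \, d\alpha = \int_0^t \lVert \mu_s \rVert_{L^2(I)}^2 \, ds \le T \lVert \mu \rVert_{\mathcal{C}}^2,
\end{equation*}
where I used the definition of the $\mathcal{C}$-norm and $t \le T$.

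Finally I would take the supremum over $t \in [0,T]$ and extract the square root to recover the stated inequality. The argument is entirely routine; there is no real obstacle since the $K$-dependence in $\Phi$ is linear and decoupled from the $\mu$-dependence in this difference, so no Gronwall-type argument or resolvent estimate is needed here (in contrast to Lemma~\ref{lemma:FPI_AGGREGATE}, where the $\mu$-perturbation appears in both the drift coefficient and the graphon aggregate).
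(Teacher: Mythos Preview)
Your proposal is correct and follows essentially the same approach as the paper: identify the difference as $\int_0^t B(K_s-K_s')\mu_s^\alpha\,ds$, apply Cauchy--Schwarz in time to split off $\lVert K-K'\rVert_{L^2([0,T])}$, integrate in $\alpha$ via Fubini to get $\int_0^t \lVert \mu_s\rVert_{L^2(I)}^2\,ds \le T\lVert \mu\rVert_{\mathcal C}^2$, then take the supremum over $t$ and the square root. The only cosmetic difference is that the paper integrates over $\alpha$ first before applying Cauchy--Schwarz, but the steps and the resulting bound are identical.
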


\begin{proof}
The Cauchy-Schwarz inequality and Fubini's theorem yield for any $K, K' \in L^2([0,T], \mathbb{R}^{k \times d})$,
    \begin{equation*}
        \begin{split}
            \int_0^1 \big\lvert \Phi^\alpha[\mu, K, G] - \Phi^\alpha[\mu, K', G] \big\rvert^2 \, d\alpha &= \int_0^1 \left\lvert \int_0^t B (K_s - K_s') \mu_s^{\alpha} \, ds \right\rvert^2 \, ds \, d\alpha \\
            &\le \lVert B \rVert_{L^\infty}^2 \lVert K - K' \rVert_{L^2([0,T])}^2 \int_0^T \int_0^1 \lvert \mu_s^{\alpha} \rvert^2 \, d\alpha \, ds \\
            &\le T \lVert B \rVert_{L^\infty}^2 \lVert K - K' \rVert_{L^2([0,T])}^2 \lVert \mu \rVert_{\mathcal{C}}^2,
        \end{split}
    \end{equation*}    
    where the supremum with respect to $t$ is taken in the last step.
    Taking the square root completes the proof.
\end{proof}

The next lemma analyzes the perturbation of $\Phi$ in $G$.

\begin{lemma}
\label{lemma:MU_PERTURB_G}
    Let $\mu \in \mathcal{C}([0,T], L^2(I, \mathbb{R}^d))$ and $K \in L^2([0,T], \mathbb{R}^{k \times d})$. Then for any $G, G' \in L^2([0,T], L^2(I, \mathbb{R}^k))$,
    \begin{equation*}
        \lVert \Phi[\mu, K, G] - \Phi[\mu, K, G'] \rVert_{\mathcal{C}} \le \sqrt{T} \lVert B \rVert_{L^\infty} \lVert G - G' \rVert_{L_B^2}.
    \end{equation*}
\end{lemma}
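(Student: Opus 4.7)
The plan is straightforward: since $\Phi[\mu,K,G]$ depends \emph{linearly} on $G$ through a single term $B G_s^\alpha$ inside the integral (with $\mu$ and $K$ held fixed), all other contributions will cancel in the difference, leaving only a simple integral in $G-G'$ to estimate.

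First I would substitute the definition \eqref{eq:PHI_OPERATOR} of $\Phi$ for both $G$ and $G'$ and subtract. Since the initial data $\mu_0^\alpha$, the drift term $(A+BK_s)\mu_s^\alpha$, and the graphon coupling $\bar A \mathbb{W}[\mu_s](\alpha)$ do not involve $G$, they cancel, and I obtain for every $\alpha \in I$ and $t \in [0,T]$,
\begin{equation*}
\Phi^\alpha[\mu,K,G](t) - \Phi^\alpha[\mu,K,G'](t) = \int_0^t B\,(G_s^\alpha - G_s'^{,\alpha})\,ds.
\end{equation*}

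Next, to bound this difference in the $\mathcal{C}$-norm, I apply the Cauchy--Schwarz inequality in the time variable to obtain, for each $\alpha \in I$,
\begin{equation*}
\bigl|\Phi^\alpha[\mu,K,G](t) - \Phi^\alpha[\mu,K,G'](t)\bigr|^2 \le t \int_0^t \bigl| B (G_s^\alpha - G_s'^{,\alpha}) \bigr|^2 \, ds \le T \lVert B \rVert_{L^\infty}^2 \int_0^T \bigl| G_s^\alpha - G_s'^{,\alpha} \bigr|^2 \, ds.
\end{equation*}
Then I integrate over $\alpha \in I$, apply Fubini's theorem to swap the order of integration, and recognise the resulting double integral as $\lVert G - G' \rVert_{L_B^2}^2$, yielding
\begin{equation*}
\int_0^1 \bigl|\Phi^\alpha[\mu,K,G](t) - \Phi^\alpha[\mu,K,G'](t)\bigr|^2 \, d\alpha \le T\lVert B \rVert_{L^\infty}^2 \lVert G - G' \rVert_{L_B^2}^2
\end{equation*}
uniformly in $t$. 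Taking the supremum over $t\in[0,T]$ and then the square root gives the claimed bound.

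There is no real obstacle here: unlike Lemmas \ref{lemma:FPI_AGGREGATE} and \ref{lemma:MU_PERTURB_K}, the perturbation in $G$ does not interact with $\mu$ or $K$, so no Gronwall-type argument or extra norm estimates for $\mu$ are required. The only small care needed is making sure the two $L^2$-integrations (over $[0,T]$ and over $I$) are swapped correctly via Fubini, which is justified by the non-negativity of the integrand and the assumption $G, G' \in L^2([0,T], L^2(I,\mathbb{R}^k))$.
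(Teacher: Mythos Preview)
Your proposal is correct and follows essentially the same approach as the paper's proof: cancel the $G$-independent terms, apply Cauchy--Schwarz (the paper phrases this as Jensen's inequality) in the time variable to pick up the factor $T$, integrate over $\alpha$ with Fubini to obtain the $L_B^2$-norm, and finish by taking the supremum in $t$ and the square root.
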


\begin{proof}
    For any $G, G' \in L^2([0,T], L^2(I, \mathbb{R}^k))$, Jensen's inequality and Fubini's theorem yield
    \begin{equation*}
        \begin{split}
            \int_0^1 \big\lvert \Phi^\alpha[\mu, K, G] - \Phi^\alpha[\mu, K, G'] \big\rvert^2 \, d\alpha &= \int_0^1 \left\lvert \int_0^t B (G_s^\alpha - G_s'^{,\alpha} ) \, ds \right\rvert^2 \, d\alpha  \le T \lVert B \rVert_{L^\infty}^2 \lVert G - G' \rVert_{L_B^2}^2.
        \end{split}
    \end{equation*}
    Hence, taking the supremum with respect to $t$ gives
    \begin{equation*}
        \lVert \Phi[\mu, K, G] - \Phi[\mu, K, G'] \rVert_{\mathcal{C}}^2 \le T \lVert B \rVert_{L^\infty}^2 \lVert G - G' \rVert_{L_B^2}^2.
    \end{equation*}
    Taking the square root concludes the proof.
\end{proof}

\subsubsection{Error bounds for $\mu$}
We first introduce $\Hat{\mu}_E^{(n+1),*} = \Phi[\Hat{\mu}_E^{(n+1),*}, K^{(n+1)}, G^{(n+1),*}]$ and $\Bar{\mu}_E^{(n+1),*} = \Phi[\Bar{\mu}_E^{(n+1),*}, K^{*}, \Bar{G}^{(n+1),*}]$. The following lemma analyzes the error bound for the state mean dynamics defined with the intercept parameter obtained through the gradient descent scheme after a certain number of iterations (i.e., $\mu^{(n+1),*}$) and state mean dynamics defined with the optimal intercept parameter for fixed slope and graphon aggregate (i.e., $\Hat{\mu}_E^{(n+1),*}$).

\begin{lemma}
\label{lemma:BOUND_MUFPITOLOCAL}
    Suppose Assumption \ref{assumption:convergence} holds. For $n \in \mathbb{N}_0$ let $K^{(n+1)}$ be obtained through the gradient descent scheme in \eqref{eq:P1_graddescent} and $G^{(n+1)}$ through \eqref{eq:P2_graddescent} with fixed slope $K^{(n+1)}$. Moreover, let $G^{(n+1),*}$ be defined in \eqref{eq:ALLGs}. Then for $\mu^{(n+1),*} = \Phi[\mu^{(n+1),*}, K^{(n+1)}, G^{(n+1)}]$ and $\Hat{\mu}_E^{(n+1),*} = \Phi[\Hat{\mu}_E^{(n+1),*}, K^{(n+1)}, G^{(n+1),*}]$ the following holds
    \begin{equation*}
        \lVert \mu^{(n+1), *} - \Hat{\mu}_E^{(n+1), *} \rVert_{\mathcal{C}} \le \frac{\sqrt{T} \lVert B \rVert_{L^\infty}}{1 - M_1}\lVert G^{(n+1)} - G^{(n+1),*} \rVert_{L_B^2}.
    \end{equation*}
\end{lemma}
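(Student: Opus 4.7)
The plan is to combine the fixed-point characterizations of $\mu^{(n+1),*}$ and $\hat{\mu}_E^{(n+1),*}$ with the perturbation lemmas for the operator $\Phi$ that have just been established, together with Assumption \ref{assumption:convergence}.

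First, I would use that both quantities are fixed points of $\Phi$ with the same slope $K^{(n+1)}$, so that
\begin{equation*}
    \mu^{(n+1),*} - \hat{\mu}_E^{(n+1),*} = \Phi[\mu^{(n+1),*}, K^{(n+1)}, G^{(n+1)}] - \Phi[\hat{\mu}_E^{(n+1),*}, K^{(n+1)}, G^{(n+1),*}].
\end{equation*}
Then I would insert the intermediate term $\Phi[\mu^{(n+1),*}, K^{(n+1)}, G^{(n+1),*}]$ and apply the triangle inequality in $\lVert \cdot \rVert_{\mathcal{C}}$, splitting the right-hand side into a part that perturbs only $G$ and a part that perturbs only $\mu$.

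Next, the $G$-perturbation term is controlled by Lemma \ref{lemma:MU_PERTURB_G}, yielding a bound of $\sqrt{T}\lVert B\rVert_{L^\infty}\lVert G^{(n+1)} - G^{(n+1),*}\rVert_{L_B^2}$. For the $\mu$-perturbation term, I would apply Lemma \ref{lemma:FPI_AGGREGATE}, which gives a contraction factor $T(\lVert A \rVert_{L^\infty} + \lVert B\rVert_{L^\infty}\lVert K^{(n+1)}\rVert_{L^\infty} + \lVert \bar{A}\rVert_{L^\infty}\lVert W\rVert_{L^2(I^2)})$ multiplying $\lVert \mu^{(n+1),*}-\hat{\mu}_E^{(n+1),*}\rVert_{\mathcal{C}}$. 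Using the uniform bound $\lVert K^{(n+1)}\rVert_{L^\infty}\le C_0^K$ from Proposition \ref{prop:BOUND_K}, this factor is at most $M_1$, which is strictly less than $1$ by Assumption \ref{assumption:convergence}.

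Finally, I would absorb the $M_1$-term on the left-hand side and divide by $1-M_1>0$ to obtain the desired inequality. No step is particularly delicate here: the proof is essentially a standard contraction-plus-perturbation argument, and the only point worth flagging is that the contraction constant comes out uniformly across the iterates precisely because $K^{(n+1)}$ is controlled by the player- and iterate-independent bound $C_0^K$ of Proposition \ref{prop:BOUND_K}, which is what makes $M_1<1$ usable throughout the outer loop.
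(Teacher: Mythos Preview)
Your proposal is correct and essentially identical to the paper's proof: both use the fixed-point identities, split via the triangle inequality into a $\mu$-perturbation (controlled by Lemma \ref{lemma:FPI_AGGREGATE} with factor $M_1$ after invoking the uniform bound $C_0^K$) and a $G$-perturbation (controlled by Lemma \ref{lemma:MU_PERTURB_G}), and then rearrange using $M_1<1$. The only cosmetic difference is that you insert the intermediate term $\Phi[\mu^{(n+1),*}, K^{(n+1)}, G^{(n+1),*}]$ while the paper inserts $\Phi[\hat{\mu}_E^{(n+1),*}, K^{(n+1)}, G^{(n+1)}]$, but since the bounds from both perturbation lemmas are independent of the non-perturbed argument, either choice gives the same estimate.
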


\begin{proof}
    By the triangle inequality,
    \begin{equation*}
        \begin{split}
            \lVert \mu^{(n+1), *} - \Hat{\mu}_E^{(n+1), *} \rVert_{\mathcal{C}} &= \lVert \Phi[\mu^{(n+1),*}, K^{(n+1)}, G^{(n+1)}] - \Phi[\Hat{\mu}_E^{(n+1),*}, K^{(n+1)}, G^{(n+1),*}] \rVert_{\mathcal{C}} \\
            &\le \lVert \Phi[\mu^{(n+1),*}, K^{(n+1)}, G^{(n+1)}] - \Phi[\Hat{\mu}_E^{(n+1),*}, K^{(n+1)}, G^{(n+1)}] \rVert_{\mathcal{C}} \\
            &\quad+ \lVert \Phi[\Hat{\mu}_E^{(n+1),*}, K^{(n+1)}, G^{(n+1)}] - \Phi[\Hat{\mu}_E^{(n+1),*}, K^{(n+1)}, G^{(n+1),*}] \rVert_{\mathcal{C}} \\
            &\le M_1 \lVert \mu^{(n+1), *} - \Hat{\mu}_E^{(n+1), *} \rVert_{\mathcal{C}} + \sqrt{T} \lVert B \rVert_{L^\infty} \lVert G^{(n+1)} - G^{(n+1),*} \rVert_{L_B^2},
        \end{split}
    \end{equation*}
    where the last inequality holds due to Lemma \ref{lemma:FPI_AGGREGATE} combined with the uniform bound $C_0^K$ and Lemma \ref{lemma:MU_PERTURB_G}.
    Re-arranging this inequality under Assumption \ref{assumption:convergence} completes the proof.
\end{proof}

The next lemma provides an error bound for the state mean dynamics defined with the optimal intercept parameter for fixed slope and graphon aggregate (i.e., $\Hat{\mu}_E^{(n+1),*}$) and the state mean dynamics defined with the optimal slope parameter, but where the intercept parameter is based on some fixed graphon aggregate (i.e., $\Bar{\mu}_E^{(n+1),*}$).

\begin{lemma}
\label{lemma:BOUND_HATTOBAR}
    Suppose Assumption \ref{assumption:convergence} holds. Let $K^*$ be the optimal slope parameter defined in \eqref{eq:systemcontrol}. For $n \in \mathbb{N}_0$ let $K^{(n+1)}$ be obtained through the gradient descent scheme in \eqref{eq:P1_graddescent}. Moreover, let $G^{(n+1),*}$ and $\Bar{G}^{(n+1),*}$ be defined in \eqref{eq:ALLGs}. Then for $\Hat{\mu}_E^{(n+1),*} = \Phi[\Hat{\mu}_E^{(n+1),*}, K^{(n+1)}, G^{(n+1),*}]$ and $\Bar{\mu}_E^{(n+1),*} = \Phi[\Bar{\mu}_E^{(n+1),*}, K^{*}, \Bar{G}^{(n+1),*}]$ the following holds
    \begin{equation*}
        \lVert \Hat{\mu}_E^{(n+1),*} - \Bar{\mu}_E^{(n+1),*} \rVert_{\mathcal{C}} \le \frac{\lVert B \rVert_{L^\infty} \sqrt{T}}{1 - M_1} \left(  \lVert \Bar{\mu}_E^{(n+1),*} \rVert_{\mathcal{C}} \lVert K^{(n+1)} - K^* \rVert_{L^2([0,T])} + \lVert G^{(n+1),*} - \Bar{G}^{(n+1),*} \rVert_{L_B^2} \right).
    \end{equation*}
\end{lemma}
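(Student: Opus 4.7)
The approach mirrors the structure of the argument for Lemma \ref{lemma:BOUND_MUFPITOLOCAL}: since both $\Hat{\mu}_E^{(n+1),*}$ and $\Bar{\mu}_E^{(n+1),*}$ are fixed points of $\Phi[\cdot, K, G]$ with different policy arguments, I will interpolate between the two policies using two auxiliary fixed-point insertions, apply the three perturbation lemmas (Lemmas \ref{lemma:FPI_AGGREGATE}, \ref{lemma:MU_PERTURB_K}, \ref{lemma:MU_PERTURB_G}), and then absorb the term containing $\lVert \Hat{\mu}_E^{(n+1),*} - \Bar{\mu}_E^{(n+1),*} \rVert_{\mathcal{C}}$ on the left-hand side.

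More precisely, the plan is to start from the identity $\Hat{\mu}_E^{(n+1),*} - \Bar{\mu}_E^{(n+1),*} = \Phi[\Hat{\mu}_E^{(n+1),*}, K^{(n+1)}, G^{(n+1),*}] - \Phi[\Bar{\mu}_E^{(n+1),*}, K^{*}, \Bar{G}^{(n+1),*}]$ and apply the triangle inequality with the intermediate terms $\Phi[\Bar{\mu}_E^{(n+1),*}, K^{(n+1)}, G^{(n+1),*}]$ and $\Phi[\Bar{\mu}_E^{(n+1),*}, K^{*}, G^{(n+1),*}]$, obtaining
\begin{equation*}
\begin{split}
\lVert \Hat{\mu}_E^{(n+1),*} - \Bar{\mu}_E^{(n+1),*} \rVert_{\mathcal{C}}
&\le \lVert \Phi[\Hat{\mu}_E^{(n+1),*}, K^{(n+1)}, G^{(n+1),*}] - \Phi[\Bar{\mu}_E^{(n+1),*}, K^{(n+1)}, G^{(n+1),*}] \rVert_{\mathcal{C}} \\
&\quad + \lVert \Phi[\Bar{\mu}_E^{(n+1),*}, K^{(n+1)}, G^{(n+1),*}] - \Phi[\Bar{\mu}_E^{(n+1),*}, K^{*}, G^{(n+1),*}] \rVert_{\mathcal{C}} \\
&\quad + \lVert \Phi[\Bar{\mu}_E^{(n+1),*}, K^{*}, G^{(n+1),*}] - \Phi[\Bar{\mu}_E^{(n+1),*}, K^{*}, \Bar{G}^{(n+1),*}] \rVert_{\mathcal{C}}.
\end{split}
\end{equation*}
The first term is bounded by Lemma \ref{lemma:FPI_AGGREGATE}, using the uniform bound $\lVert K^{(n+1)} \rVert_{L^\infty} \le C_0^K$ from Proposition \ref{prop:BOUND_K}, yielding the factor $M_1$ (defined in \eqref{eq:M1}) multiplying $\lVert \Hat{\mu}_E^{(n+1),*} - \Bar{\mu}_E^{(n+1),*} \rVert_{\mathcal{C}}$. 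The second term is controlled via Lemma \ref{lemma:MU_PERTURB_K} and contributes the summand $\sqrt{T}\lVert B\rVert_{L^\infty}\lVert \Bar{\mu}_E^{(n+1),*}\rVert_{\mathcal{C}} \lVert K^{(n+1)} - K^*\rVert_{L^2([0,T])}$. The third term is handled by Lemma \ref{lemma:MU_PERTURB_G}, contributing $\sqrt{T}\lVert B \rVert_{L^\infty} \lVert G^{(n+1),*} - \Bar{G}^{(n+1),*} \rVert_{L_B^2}$.

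Collecting these three estimates and rearranging under Assumption \ref{assumption:convergence} (which ensures $1 - M_1 > 0$) gives the claimed inequality after dividing by $1 - M_1$. No step is particularly delicate, since all the heavy lifting is done by the perturbation lemmas already proved; the only thing to be careful about is the choice of intermediate policies so that each consecutive pair differs in exactly one of the three arguments $(\mu, K, G)$, allowing direct application of the respective perturbation estimates.
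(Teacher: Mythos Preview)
Your proposal is correct and follows essentially the same approach as the paper: the same triangle inequality with the same two intermediate terms, the same application of Lemmas \ref{lemma:FPI_AGGREGATE}, \ref{lemma:MU_PERTURB_K}, and \ref{lemma:MU_PERTURB_G} in the same order, and the same rearrangement under Assumption \ref{assumption:convergence}.
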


\begin{proof}
    By the triangle inequality,
    \begin{equation}
    \label{eq:MEAN_TRIANGLE_firstineq}
        \begin{split}
            \lVert \Hat{\mu}_E^{(n+1), *} - \Bar{\mu}_E^{(n+1),*} \rVert_{\mathcal{C}} &= \lVert \Phi[\Hat{\mu}_E^{(n+1), *}, K^{(n+1)}, G^{(n+1), *}] - \Phi[\Bar{\mu}_E^{(n+1), *}, K^{*}, \Bar{G}^{(n+1), *}] \rVert_{\mathcal{C}} \\
            &\le \lVert \Phi[\Hat{\mu}_E^{(n+1), *}, K^{(n+1)}, G^{(n+1), *}] - \Phi[\Bar{\mu}_E^{(n+1), *}, K^{(n+1)}, G^{(n+1), *}] \rVert_{\mathcal{C}} \\
            &\quad+ \lVert \Phi[\Bar{\mu}_E^{(n+1), *}, K^{(n+1)}, G^{(n+1), *}] - \Phi[\Bar{\mu}_E^{(n+1), *}, K^{*}, G^{(n+1), *}] \rVert_{\mathcal{C}} \\
            &\quad+ \lVert \Phi[\Bar{\mu}_E^{(n+1), *}, K^{*}, G^{(n+1), *}] - \Phi[\Bar{\mu}_E^{(n+1), *}, K^{*}, \Bar{G}^{(n+1), *}] \rVert_{\mathcal{C}}.
        \end{split}
    \end{equation}
    By Lemma \ref{lemma:FPI_AGGREGATE} combined with the uniform bound $C_0^K$, we upper bound  the first term by 
    \begin{equation*}
        \begin{split}
            \lVert \Phi[\Hat{\mu}_E^{(n+1), *}, K^{(n+1)}, G^{(n+1), *}] &- \Phi[\Bar{\mu}_E^{(n+1), *}, K^{(n+1)}, G^{(n+1), *}] \rVert_{\mathcal{C}} \\
            &\le M_1 \lVert \Hat{\mu}_E^{(n+1), *} - \Bar{\mu}_E^{(n+1), *} \rVert_{\mathcal{C}}.
        \end{split}
    \end{equation*}
  Using  Lemma \ref{lemma:MU_PERTURB_K}, the second term is upper bounded by  
    \begin{equation*}
        \begin{split}
            & \lVert \Phi[\Bar{\mu}_E^{(n+1), *}, K^{(n+1)}, G^{(n+1), *}] - \Phi[\Bar{\mu}_E^{(n+1), *}, K^{*}, G^{(n+1), *}] \rVert_{\mathcal{C}} \\
             &\quad \le \sqrt{T} \lVert B \rVert_{L^\infty} \lVert \Bar{\mu}_E^{(n+1),*} \rVert_{\mathcal{C}}  \lVert K^{(n+1)} - K^* \rVert_{L^2([0,T])}.
        \end{split}
    \end{equation*}
    Lemma \ref{lemma:MU_PERTURB_G} implies the following bound for the third term,
    \begin{equation*}
        \lVert \Phi[\Bar{\mu}_E^{(n+1), *}, K^{*}, G^{(n+1), *}] - \Phi[\Bar{\mu}_E^{(n+1), *}, K^{*}, \Bar{G}^{(n+1), *}] \rVert_{\mathcal{C}} \le \sqrt{T} \lVert B \rVert_{L^\infty} \lVert G^{(n+1),*} - \Bar{G}^{(n+1),*} \rVert_{L_B^2}.
    \end{equation*}
    Using those three inequalities in \eqref{eq:MEAN_TRIANGLE_firstineq} and re-arranging the resulting inequality under Assumption \ref{assumption:convergence} completes the proof.
\end{proof}

Finally, the next lemma provides an error bound to the NE population state mean $\mu^*$.

\begin{lemma}
\label{lemma:BOUND_MULOCALTOOPTIMAL}
    Suppose Assumption \ref{assumption:convergence} holds. For $n \in \mathbb{N}_0$ let $\Bar{G}^{(n+1),*}$ be defined in \eqref{eq:ALLGs}. Let $(K^*,G^*)$ be the optimal policy parameters defined in \eqref{eq:systemcontrol}. Then for $\Bar{\mu}_E^{(n+1),*} = \Phi[\Bar{\mu}_E^{(n+1),*}, K^{*}, \Bar{G}^{(n+1),*}]$ and $\mu^* = \Phi[\mu^*, K^*, G^*]$ the following holds
    \begin{equation*}
        \lVert \Bar{\mu}_E^{(n+1),*} - \mu^* \rVert_{\mathcal{C}} \le \frac{\sqrt{T} \lVert B \rVert_{L^\infty}}{1 -T(\lVert A \rVert_{L^\infty} + \lVert B \rVert_{L^\infty} \lVert K^* \rVert_{L^\infty} + \lVert \Bar{A} \rVert_{L^\infty} \lVert W \rVert_{L^2(I^2)})} \lVert \Bar{G}^{(n+1),*} - G^* \rVert_{L_B^2}.
    \end{equation*}
\end{lemma}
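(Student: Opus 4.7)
The plan is to follow the pattern already established in Lemmas \ref{lemma:BOUND_MUFPITOLOCAL} and \ref{lemma:BOUND_HATTOBAR}: apply the triangle inequality after inserting a convenient intermediate term, bound each piece using the perturbation lemmas for $\Phi$, and then absorb a self-referential term on the right-hand side using the contractivity provided by Assumption \ref{assumption:convergence}.

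Concretely, both $\bar{\mu}_E^{(n+1),*}$ and $\mu^*$ are fixed points of $\Phi[\cdot, K^*, \cdot]$ with the same slope $K^*$ but different intercepts $\bar{G}^{(n+1),*}$ and $G^*$. I insert the intermediate iterate $\Phi[\mu^*, K^*, \bar{G}^{(n+1),*}]$ and write
\begin{equation*}
\begin{split}
\lVert \bar{\mu}_E^{(n+1),*} - \mu^* \rVert_{\mathcal{C}}
&= \lVert \Phi[\bar{\mu}_E^{(n+1),*}, K^*, \bar{G}^{(n+1),*}] - \Phi[\mu^*, K^*, G^*] \rVert_{\mathcal{C}} \\
&\le \lVert \Phi[\bar{\mu}_E^{(n+1),*}, K^*, \bar{G}^{(n+1),*}] - \Phi[\mu^*, K^*, \bar{G}^{(n+1),*}] \rVert_{\mathcal{C}} \\
&\quad + \lVert \Phi[\mu^*, K^*, \bar{G}^{(n+1),*}] - \Phi[\mu^*, K^*, G^*] \rVert_{\mathcal{C}}.
\end{split}
\end{equation*}

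Next I estimate the two terms. The first term is handled by Lemma \ref{lemma:FPI_AGGREGATE} with slope $K^*$, yielding the bound
$T(\lVert A \rVert_{L^\infty} + \lVert B \rVert_{L^\infty} \lVert K^* \rVert_{L^\infty} + \lVert \bar{A} \rVert_{L^\infty} \lVert W \rVert_{L^2(I^2)}) \lVert \bar{\mu}_E^{(n+1),*} - \mu^* \rVert_{\mathcal{C}}$. The second term is a pure intercept perturbation, so Lemma \ref{lemma:MU_PERTURB_G} gives $\sqrt{T} \lVert B \rVert_{L^\infty} \lVert \bar{G}^{(n+1),*} - G^* \rVert_{L_B^2}$.

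Finally I rearrange. Since $\lVert K^* \rVert_{L^\infty} \le C_0^K$ by Proposition \ref{prop:BOUND_K}, the constant $T(\lVert A \rVert_{L^\infty} + \lVert B \rVert_{L^\infty} \lVert K^* \rVert_{L^\infty} + \lVert \bar{A} \rVert_{L^\infty} \lVert W \rVert_{L^2(I^2)})$ is bounded above by $M_1$, which is strictly less than $1$ by Assumption \ref{assumption:convergence}. This guarantees that the coefficient $1 - T(\lVert A \rVert_{L^\infty} + \lVert B \rVert_{L^\infty} \lVert K^* \rVert_{L^\infty} + \lVert \bar{A} \rVert_{L^\infty} \lVert W \rVert_{L^2(I^2)})$ appearing after rearrangement is strictly positive, so dividing through yields the stated inequality. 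There is no substantive obstacle here; the only subtle point is noting that the contraction constant appearing in the denominator uses $\lVert K^* \rVert_{L^\infty}$ rather than $C_0^K$, but this is permissible precisely because the bound from Lemma \ref{lemma:FPI_AGGREGATE} is in terms of the actual slope parameter $K^*$ driving $\Phi$, and the sharper constant is still dominated by $M_1 < 1$.
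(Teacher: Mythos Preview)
Your proof is correct and follows essentially the same approach as the paper: the same intermediate term $\Phi[\mu^*, K^*, \bar{G}^{(n+1),*}]$ is inserted, the two pieces are bounded by Lemmas \ref{lemma:FPI_AGGREGATE} and \ref{lemma:MU_PERTURB_G} respectively, and the self-referential term is absorbed using Assumption \ref{assumption:convergence}. Your additional remark about the denominator featuring $\lVert K^* \rVert_{L^\infty}$ rather than $C_0^K$ is a helpful clarification that the paper leaves implicit.
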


\begin{proof}
    By the triangle inequality,
    \begin{equation}
        \begin{split}
        \label{eq:MEAN_TRIANGLE_secondineq}
            \lVert \Bar{\mu}_E^{(n+1),*} - \mu^* \rVert_{\mathcal{C}} &= \lVert \Phi[\Bar{\mu}_E^{(n+1),*}, K^*, \Bar{G}^{(n+1),*}] - \Phi[\mu^*, K^*, G^*] \rVert_{\mathcal{C}} \\
            &\le \lVert \Phi[\Bar{\mu}_E^{(n+1),*}, K^*, \Bar{G}^{(n+1),*}] - \Phi[\mu^*, K^{*}, \Bar{G}^{(n+1),*}] \rVert_{\mathcal{C}} \\
            &\quad+\lVert \Phi[\mu^{*}, K^{*}, \Bar{G}^{(n+1),*}] - \Phi[\mu^*, K^*, G^*] \rVert_{\mathcal{C}}.
        \end{split}
    \end{equation}
    Then, we have by Lemma \ref{lemma:FPI_AGGREGATE} for the first term,
    \begin{equation*}
        \begin{split}
            \lVert \Phi[\Bar{\mu}_E^{(n+1),*}, K^*, \Bar{G}^{(n+1),*}] &- \Phi[\mu^*, K^*, \Bar{G}^{(n+1),*}] \rVert_{\mathcal{C}} \\
            &\le T(\lVert A \rVert_{L^\infty} + \lVert B \rVert_{L^\infty} \lVert K^* \rVert_{L^\infty} + \lVert \Bar{A} \rVert_{L^\infty} \lVert W \rVert_{L^2(I^2)}) \lVert \Bar{\mu}_E^{(n+1),*} - \mu^* \rVert_{\mathcal{C}},
        \end{split}
    \end{equation*}
    and by
    Lemma \ref{lemma:MU_PERTURB_G} for the second term,
    \begin{equation*}
        \lVert \Phi[\mu^{*}, K^{*}, \Bar{G}^{(n+1),*}] - \Phi[\mu^*, K^*, G^*] \rVert_{\mathcal{C}} \le \sqrt{T} \lVert B \rVert_{L^\infty} \lVert \Bar{G}^{(n+1),*} - G^* \rVert_{L_B^2}.
    \end{equation*}
    Finally, applying those two inequalities to \eqref{eq:MEAN_TRIANGLE_secondineq} and re-arranging the resulting inequality under Assumption \ref{assumption:convergence} finishes the proof.
\end{proof}

\subsubsection{Proof of Theorem \ref{theorem:CONVERGENCE_FINAL}}

Before proving Theorem \ref{theorem:CONVERGENCE_FINAL}, we specify all necessary constants as follows. For $n \in \{0, 1, \dots, N - 1 \}$, where $N \in \mathbb{N}$ is the number of outer iterations, let the numbers of gradient descent iterations  $L_n^K , L_n^G \in \mathbb{N}_0$ satisfy 
\begin{equation*}
    \begin{split}
        &L_n^K > - \frac{2}{\log{\left(1 - \eta_K C_2^K \right)}} \log{\left( \frac{ 3 \lVert J_1^\cdot(K^{(n)}) - J_1^\cdot(K^*) \rVert_{L^2(I)}^{1/2} \sqrt{C_3^K} C_{\Hat{\mu}, \Bar{\mu}_E}^{(n+1),*} }{\omega_n} \right)}, \\
        &L_n^G > -\frac{2}{\log{(1 - \eta_G m)}} \log{\left( \frac{3 T \lVert B \rVert_{L^\infty} \lVert G^{(n)} - G^{(n+1),*} \rVert_{L_B^2}}{(1 - M_1) \omega_n} \right)},
    \end{split}
\end{equation*}
where
    \begin{equation*}
        \begin{split}
            &C_{\Hat{\mu}}^{(n+1),*} \coloneqq \lVert \Hat{\mu}^{(n+1),*} \rVert_{\mathcal{C}}, \quad C_{\Hat{\mu}, \Bar{\mu}_E}^{(n+1),*} \coloneqq \frac{\sqrt{T} \lVert B \rVert_{L^\infty}}{1 - M_1} \left( \lVert \Bar{\mu}_E^{(n+1),*} \rVert_{\mathcal{C}} + C_{\Hat{\mu}}^{(n+1),*} \right).
        \end{split}    
    \end{equation*}
    Moreover, for the last iteration, additionally impose
    \begin{equation*}
        \begin{split}
            &L_{N-1}^K > - \frac{2}{\log{{\left(1 - \eta_K C_2^K \right)}}}  \max\Bigg\{ \log{\left( \frac{ \lVert J_1^\cdot(K^{(N-1)}) - J_1^\cdot(K^*) \rVert_{L^2(I)}^{1/2} \sqrt{C_3^K}}{\omega_{N-1}} \right)}, \\
            &\quad\quad\quad\quad\quad\log{\left( \frac{\lVert J_1^\cdot(K^{(N-1)}) - J_1^\cdot(K^*)\rVert_{L^2(I)}^{1/2} \sqrt{C_3^K} C_{\Hat{\mu}}^{(N),*}}{\omega_{N-1}} \right)} \Bigg\}, \\
            &L_{N-1}^G > -\frac{2}{\log{(1 - \eta_G m)}} \log{\left( \frac{\lVert G^{(N-1)} - G^{(N),*} \rVert_{L_B^2}}{ \omega_{N-1}} \right)}.
        \end{split}
    \end{equation*}
Recall that     $M_\vartheta$ and $M^\vartheta$ are the covariance bounds from Lemma \ref{lemma:BOUND_VAR} and $C_0^K$ is the uniform upper bound for the slope parameter from Proposition \ref{prop:BOUND_K}. $\eta_K$ and $\eta_G$ are the stepsizes for the gradient descent scheme in $K$ and $G$, respectively (see Theorems \ref{theorem:K_CONVERGENCE} and \ref{theorem:G_CONVERGENCE}). $m$ is the strong convexity parameter from Lemma $\ref{lemma:J22_stronglyconvex}$ and $C_1^K, C_3^K$ are defined in \eqref{eq:constant_convergence_K}. $M_1$ and $M_2$ are introduced in Assumption \ref{assumption:convergence}. Furthermore, define
    \begin{equation}
    \label{eq:M_G}
        \begin{split}
            M_G &\coloneqq 1 + \frac{M_2 (1 - T (\lVert A \rVert_{L^\infty} + \lVert B \rVert_{L^\infty} \lVert K^* \rVert_{L^\infty} + \lVert \Bar{A} \rVert_{L^\infty} \lVert W \rVert_{L^2(I^2)}))}{\sqrt{T} \lVert B \rVert_{L^\infty}} \\
            &= 1 + \frac{\sqrt{T} \lVert B \rVert_{L^\infty} \left( \lvert \Bar{Q} \rvert \lvert \Bar{H} \rvert + T \left( \lVert P^* \rVert_{L^\infty} \lVert \Bar{A} \rVert_{L^\infty} + \lVert Q \rVert_{L^\infty} \lVert H \rVert_{L^\infty} \right) \right) \lVert W \rVert_{L^2(I^2)}}{\underline{\lambda}^R (1 - T (\lVert A \rVert_{L^\infty} + \lVert B \rVert_{L^\infty} \lVert K^* \rVert_{L^\infty} + \lVert \Bar{A} \rVert_{L^\infty} \lVert W \rVert_{L^2(I^2)}))}.
        \end{split} 
    \end{equation}

\begin{remark}[\textbf{Bounds for number of inner iterations}]
    A more conservative bound for $L_n^K$ can be found by noting that $J_1^\alpha(K^{(n)}) - J_1^\alpha(K^*) \le J_1^\alpha(K^{(n)}) \le J_1^\alpha(K^{(0)})$ for all $\alpha \in I$. Moreover, for $L_n^G$ due to the strong convexity, $\lVert G^{(n)} - G^{(n+1), *} \rVert_{L_B^2} \le (2/m)^{1/4} \lVert J_2^\cdot(G^{(n),\cdot}) - J_2^\cdot(G^{(n+1),*,\cdot}) \rVert_{L^2(I)}^{1/2}$ and $J_2^\alpha(G^{(n), \alpha}) - J_2^\alpha(G^{(n+1),*,\alpha}) \le J_2^\alpha(G^{(n), \alpha})$ for all $\alpha \in I$. However, this monotonicity conditions cannot be extended to the initial cost functional $J_2^\alpha(G^{(0), \alpha})$ due to the graphon aggregate updates. $C_{\Hat{\mu}}^{(n+1),*}$ and $C_{\Hat{\mu}, \Bar{\mu}_E}^{(n+1),*}$ can be upper bounded in terms of $C_0^K$, $P^*$ and $Z^{(n)}$ using Proposition \ref{proposition:optimG_fixKZ} and Gronwall's inequality.
\end{remark}
    
\begin{proof}[Proof of Theorem \ref{theorem:CONVERGENCE_FINAL}]
    Let $n \in \{0,1, \dots, N-1 \}$. By the triangle inequality, 
    \begin{equation*}
        \begin{split}
            \lVert \mu^{(n+1)} - \mu^* \rVert_{\mathcal{C}} &\le \lVert \mu^{(n+1)} - \mu^{(n+1),*} \rVert_{\mathcal{C}} + \lVert \mu^{(n+1),*} - \Hat{\mu}_E^{(n+1),*} \rVert_{\mathcal{C}} \\
            &\quad+ \lVert \Hat{\mu}_E^{(n+1),*} - \Bar{\mu}_E^{(n+1),*} \rVert_{\mathcal{C}} + \lVert \Bar{\mu}_E^{(n+1),*} - \mu^* \rVert_{\mathcal{C}}.
        \end{split}
    \end{equation*}
    Hence, by \eqref{eq:mu_estimator_assumption} and Lemmas \ref{lemma:BOUND_MUFPITOLOCAL}, \ref{lemma:BOUND_HATTOBAR} and \ref{lemma:BOUND_MULOCALTOOPTIMAL},
    \begin{equation}
        \label{eq:conv_proof_mu_1}
        \begin{split}
            &\lVert \mu^{(n+1)}  - \mu^* \rVert_{\mathcal{C}} \\
            &\quad \le \frac{\omega_n}{3} + \frac{\sqrt{T} \lVert B \rVert_{L^\infty}}{1 - M_1} \left( \lVert G^{(n+1)} - G^{(n+1),*} \rVert_{L_B^2} + \lVert G^{(n+1),*} - \Bar{G}^{(n+1),*} \rVert_{L_B^2} \right) \\
            &\qquad+ \frac{\sqrt{T} \lVert B \rVert_{L^\infty}}{1 - T(\lVert A \rVert_{L^\infty} + \lVert B \rVert_{L^\infty} \lVert K^* \rVert_{L^\infty} + \lVert \Bar{A} \rVert_{L^\infty} \lVert W \rVert_{L^2(I^2)})} \lVert \Bar{G}^{(n+1),*} - G^* \rVert_{L_B^2} \\
            &\qquad+ \frac{\sqrt{T} \lVert B \rVert_{L^\infty}}{1 - M_1} \lVert \Bar{\mu}_E^{(n+1),*} \rVert_{\mathcal{C}} \lVert K^{(n+1)} - K^* \rVert_{L^2([0,T])}.
        \end{split}
    \end{equation}
    Recall the error bound from Lemma \ref{lemma:BOUND_GUPDATETOLOCAL},
    \begin{equation}
        \label{eq:bound_G_GD_tolocaloptim_final}
        \lVert G^{(n+1),*} - \Bar{G}^{(n+1),*} \rVert_{L_B^2} \le C_{\Hat{\mu}}^{(n+1),*} \lVert K^{(n+1)} - K^* \rVert_{L^2([0,T])}.
    \end{equation}
    Additionally, by Lemma \ref{lemma:BOUND_GLOCALTOPOPTIMAL},
    \begin{equation}
        \begin{split}
            &\lVert \Bar{G}^{(n+1),*} - G^* \rVert_{L_B^2} \\
            &\le \frac{\lVert B \rVert_{L^\infty} \sqrt{T} \left( \lvert \Bar{Q} \rvert \lvert \Bar{H} \rvert + T \left( \lVert P^* \rVert_{L^\infty} \lVert \Bar{A} \rVert_{L^\infty} + \lVert Q \rVert_{L^\infty} \lVert H \rVert_{L^\infty} \right) \right) \lVert W \rVert_{L^2(I^2)}}{\underline{\lambda}^R (1 - T(\lVert A \rVert_{L^\infty} + \lVert B \rVert_{L^\infty} \lVert K^* \rVert_{L^\infty}))} \lVert \mu^{(n)} - \mu^* \rVert_{\mathcal{C}}.
        \end{split}
        \label{eq:bound_G_local_optim_final}
    \end{equation}
    Hence, using \eqref{eq:bound_G_GD_tolocaloptim_final} and \eqref{eq:bound_G_local_optim_final} in \eqref{eq:conv_proof_mu_1} yields
    \begin{equation}
        \label{eq:conv_proof_mu_2}
        \begin{split}
            &\lVert \mu^{(n+1)} - \mu^* \rVert_{\mathcal{C}} \\
            &\le \frac{\omega_n}{3} + \frac{\sqrt{T} \lVert B \rVert_{L^\infty}}{1 - M_1} \lVert G^{(n+1)} - G^{(n+1),*} \rVert_{L_B^2} \\
            &\quad+ \frac{\sqrt{T} \lVert B \rVert_{L^\infty} }{1 - M_1} \left( \lVert \Bar{\mu}_E^{(n+1),*} \rVert_{L_B^2} + C_{\Hat{\mu}}^{(n+1),*} \right) \lVert K^{(n+1)} - K^* \rVert_{L^2([0,T])} \\
            &\quad+ \frac{T \lVert B \rVert_{L^\infty}^2 \left( \lvert \Bar{Q} \rvert \lvert \Bar{H} \rvert + T \left( \lVert P^* \rVert_{L^\infty} \lVert \Bar{A} \rVert_{L^\infty} + \lVert Q \rVert_{L^\infty} \lVert H \rVert_{L^\infty} \right) \right) \lVert W \rVert_{L^2(I^2)}}{\underline{\lambda}^R (1 - T (\lVert A \rVert_{L^\infty} + \lVert B \rVert_{L^\infty} \lVert K^* \rVert_{L^\infty} + \lVert \Bar{A} \rVert_{L^\infty} \lVert W \rVert_{L^2(I^2)}))^2} \lVert \mu^{(n)} - \mu^* \rVert_{\mathcal{C}}.
        \end{split}
    \end{equation}
    Now, observe that by Theorem \ref{theorem:K_CONVERGENCE},
    \begin{equation*}
        \begin{split}
            \lVert K^{(n+1)} - K^* \rVert_{L^2([0,T])} \le \sqrt{C_3^K} \left(1 - \eta_K C_2^K \right)^{L_n^K/2} \lVert J_1^\cdot(K^{(n)}) - J_1^\cdot(K^*) \rVert_{L^2(I)}^{1/2},
        \end{split}
    \end{equation*}
    and by Theorem \ref{theorem:G_CONVERGENCE},
    \begin{equation*}
        \lVert G^{(n+1)} - G^{(n+1),*} \rVert_{L_B^2} \le (1 - \eta_G m)^{L_n^G/2} \lVert G^{(n)} - G^{(n+1),*} \rVert_{L_B^2}.
    \end{equation*}
    Applying those error bounds to \eqref{eq:conv_proof_mu_2} yields,
    \begin{equation*}
        \begin{split}
            \lVert \mu^{(n+1)} - \mu^* \rVert_{\mathcal{C}} &\le  \frac{\omega_n}{3} + \frac{\sqrt{T} \lVert B \rVert_{L^\infty}}{1 - M_1} (1 - \eta_G m)^{L_n^G/2} \lVert G^{(n)} - G^{(n+1),*} \rVert_{L_B^2} \\
            &\quad+ \sqrt{C_3^K} C_{\Hat{\mu}, \Bar{\mu}_E}^{(n+1),*} \left(1 - \eta_K C_2^K \right)^{L_n^K/2} \lVert J_1^\cdot(K^{(n)}) - J_1^\cdot(K^*) \rVert_{L^2(I)}^{1/2} \\
            &\quad+ M_2 \lVert \mu^{(n)} - \mu^* \rVert_{\mathcal{C}}.
        \end{split}
    \end{equation*}
    By the choice of $L_n^K$ and $L_n^G$,
    \begin{equation*}
        \lVert \mu^{(n+1)} - \mu^* \rVert_{\mathcal{C}} \le \omega_n + M_2 \lVert \mu^{(n)} - \mu^* \rVert_{\mathcal{C}}.
    \end{equation*}
    Applying this inequality inductively gives the desired error bound for the population state mean,
    \begin{equation}
    \label{eq:mu_convergenceresult_inproof}
        \begin{split}
            \lVert \mu^{(N)} - \mu^* \rVert_{\mathcal{C}} &\le \sum_{n=0}^{N-1} M_2^{N-1-n} \omega_n + M_2^N \lVert \mu^{(0)} - \mu^* \rVert_{\mathcal{C}} \le \frac{\varepsilon}{2} + M_2^N \lVert \mu^{(0)} - \mu^* \rVert_{\mathcal{C}} \le \varepsilon,
        \end{split} 
    \end{equation}
    where the last inequality holds by the choice of $N$ and Assumption \ref{assumption:convergence}. It remains to analyze the error bounds for the policy parameter.
    Firstly, by the choice of $L_{N-1}^K$,
    \begin{equation*}
        \begin{split}
            \lVert K^{(N)} - K^* \rVert_{L^2([0,T])} &\le \sqrt{C_3^K} \left(1 - \eta_K C_2^K \right)^{L_{N-1}^K/2} \lVert J_1^\cdot(K^{(N-1)}) - J_1^\cdot(K^*) \rVert_{L^2(I)}^{1/2} \le \varepsilon.
        \end{split}
    \end{equation*}
    Secondly, using the triangle inequality, and applying the inequalities \eqref{eq:bound_G_GD_tolocaloptim_final} and \eqref{eq:bound_G_local_optim_final},
    \begin{equation*}
        \begin{split}
            \lVert G^{(N)} - G^* \rVert_{L_B^2} &\le \lVert G^{(N)} - G^{(N),*} \rVert_{L_B^2} + \lVert G^{(N),*} - \Bar{G}^{(N),*} \rVert_{L_B^2} + \lVert \Bar{G}^{(N),*} - G^* \rVert_{L_B^2} \\
            &\le \lVert G^{(N)} - G^{(N),*} \rVert_{L_B^2} + C_{\Hat{\mu}}^{(N),*} \lVert K^{(N)} - K^* \rVert_{L^2([0,T])} + (M_G - 1) \lVert \mu^{(N)} - \mu^* \rVert_{\mathcal{C}} \\
            &\le 2 \omega_{N-1} + (M_G - 1) \varepsilon \\
            &\le \varepsilon \left(1 + M_G - 1 \right) = \varepsilon M_G,
        \end{split}
    \end{equation*}
    where the second to last inequality follows by the choice of $L_{N-1}^K$ and $L_{N-1}^G$, and \eqref{eq:mu_convergenceresult_inproof}. The last inequality follows by the choice of $(\omega_{n})_{n \in \{0,1, \dots, N-1\}}$. This concludes the proof.
\end{proof}

\section*{Acknowledgments}
Philipp Plank is supported by the Roth Scholarship by Imperial College London and the Excellence Scholarship by Gesellschaft f\"ur Forschungsf\"orderung Nieder\"osterreich (a subsidiary of the province of Lower Austria).

\bibliographystyle{plain}
\bibliography{literature.bib}
\newpage

\appendix

\section{Policy gradient estimation using zeroth-order method}
\label{appendix:gradient_est}

Here we  present  the 
model-free 
gradient estimation of the policy parameters used in Section \ref{sec:numerical_model_free}. 
In the experiment,
we set the number of trajectories  for the gradient estimation to be  $10$ and the magnitude of  policy perturbation  to be $\sigma_\epsilon = 0.25$.

\begin{algorithm}[!ht]
\caption{Model-free gradient estimation for Algorithm \ref{algo}}
\label{algo_modelfree}
\begin{algorithmic}[1]
\STATE \textbf{Input:} Number of trajectories  $N$ for gradient estimation, number of policy parameters $N_{\mathrm{policy}} + 1$, number of players $N_{\mathrm{player}}$, policy parameters $(K, G)$ and graphon aggregate $Z$
\STATE \textbf{Output:} $\widehat{\nabla_K} J_1(K)$, $\widehat{\nabla_G} J_2^{\alpha_j}(K, G^{\alpha_j}, Z^{\alpha_j})$ for all $j \in \{1,2,\dots,N_{\mathrm{player}} \}$
\FOR{$n = 0, \dots, N - 1$}
    \FOR{$i = 0, \dots, N_{\mathrm{policy}}$}
        \STATE Simulate $\epsilon^{(n),(i)} \sim \mathcal{U}(\{-\sigma_\epsilon, \sigma_\epsilon\})$
        \STATE Define the perturbed policy parameter: $K^{(n), (i)} = (K_{\tau_0}, \dots, K_{\tau_i} + \epsilon^{(n),(i)}, \dots, K_{\tau_{N_{\mathrm{policy}}}})$
        \STATE Simulate the cost at timestep $\tau_i$ using the oracle: $(\hat{J}_1^{(n)}(K^{(n), (i)}))_{\tau_i}$
    \ENDFOR
\ENDFOR
\STATE Estimate the gradients for each $i \in \{0,1,\dots, N_{\mathrm{policy}}\}$ as
\begin{equation*}
    \widehat{\nabla_{K_{\tau_i}}} J_1(K) = \frac{1}{N \sigma_\epsilon^2} \sum_{n = 0}^{N-1} (\hat{J}_1^{(n)}(K^{(n), (i)}))_{\tau_i} \epsilon^{(n),(i)}.
\end{equation*}
\FOR{$j = 1, \dots, N_{\mathrm{player}}$}
    \FOR{$n = 0, \dots, N - 1$}
        \FOR{$i = 0, \dots, N_{\mathrm{policy}}$}
            \STATE Simulate $\epsilon^{(n),(i)} \sim \mathcal{U}(\{-\sigma_\epsilon, \sigma_\epsilon\})$
            \STATE Define the perturbed policy parameter: $G^{(n), (i), \alpha_j} = (G_{\tau_0}^{\alpha_j}, \dots, G_{\tau_i}^{\alpha_j} + \epsilon^{(n),(i)}, \dots, G_{\tau_{N_{\mathrm{policy}}}}^{\alpha_j})$
            \STATE Simulate the cost at timestep $\tau_i$ using the oracle: $(\hat{J}_2^{(n), \alpha_j}(K, G^{(n), (i), \alpha_j}, Z^{\alpha_j}))_{\tau_i}$
        \ENDFOR
    \ENDFOR
    \STATE Estimate the gradients for each $i \in \{0,1,\dots, N_{\mathrm{policy}} \}$ as
    \begin{equation*}
        \widehat{\nabla_{G_{\tau_i}}} J_2^{\alpha_j}(K, G^{(n), (i), \alpha_j}, Z^{\alpha_j}) = \frac{1}{N \sigma_\epsilon^2} \sum_{n = 0}^{N-1} (\hat{J}_2^{(n), \alpha_j}(K, G^{(n), (i), \alpha_j}, Z^{\alpha_j}))_{\tau_i} \epsilon^{(n),(i)}.
    \end{equation*}
\ENDFOR
\end{algorithmic}
\label{algorithm:gradest}
\end{algorithm}

\end{document}